\def\vec#1{\mathchoice{\mbox{\boldmath$\displaystyle#1$}}
{\mbox{\boldmath$\textstyle#1$}}
{\mbox{\boldmath$\scriptstyle#1$}}
{\mbox{\boldmath$\scriptscriptstyle#1$}}}
\newcommand\true{\mbox{true}} 
\newcommand\false{\mbox{false}} 
\newcommand\PHI{\vec\Phi}
\newcommand\cB{\mathcal{B}} 
\newcommand\cD{\mathcal{D}} 
\newcommand\cF{\mathcal{F}} 
\newcommand\cE{\mathcal{E}} 
\newcommand\cU{\mathcal{U}} 
\newcommand\cN{\mathcal{N}} 
\newcommand\cQ{\mathcal{Q}} 
\newcommand\cH{\mathcal{H}} 
\newcommand\cS{\mathcal{S}} 
\newcommand\cT{\mathcal{T}} 
\newcommand\cI{\mathcal{I}} 
\newcommand\cJ{\mathcal{J}} 
\newcommand\cM{\mathcal{M}} 
\newcommand\cP{\mathcal{P}} 
\newcommand\cX{\mathcal{X}} 
\newcommand\cY{\mathcal{Y}}
\newcommand\cZ{\mathcal{Z}}
\def\cE{{\cal E}}
\newcommand\Fix{\texttt{Fix}} 
\newcommand\eul{\mathrm{e}} 
\newcommand\eps{\varepsilon}
\newcommand\Erw{\mathrm{E}} 
\newcommand\pr{\mathrm{P}}
\newcommand{\vecone}{\vec{1}}
\newcommand{\Bin}{{\rm Bin}}
\newcommand{\bink}[2] {{{#1}\choose {#2}}} 
\newcommand\bc[1]{\left({#1}\right)} 
\newcommand\cbc[1]{\left\{{#1}\right\}} 
\newcommand\bcfr[2]{\bc{\frac{#1}{#2}}} 
\newcommand\brk[1]{\left\lbrack{#1}\right\rbrack}
\newcommand\abs[1]{\left|{#1}\right|} 
\newcommand\uppergauss[1]{\left\lceil{#1}\right\rceil}
\newcommand{\Whp}{W.h.p.} 
\newcommand{\whp}{w.h.p.} 
\newcommand{\stacksign}[2]{{\stackrel{\mbox{\scriptsize #1}}{#2}}}
\newcommand\Lem{Lemma}
\newcommand\Prop{Proposition}
\newcommand\Thm{Theorem}
\newcommand\Cor{Corollary}
\newcommand\Sec{Section}
\newcommand\algstyle{\small\sffamily}
\newtheorem{definition}{Definition}[section]
\newtheorem{theorem}[definition]{Theorem}
\newtheorem{lemma}[definition]{Lemma}
\newtheorem{proposition}[definition]{Proposition}
\newtheorem{corollary}[definition]{Corollary}
\newtheorem{Algo}[definition]{Algorithm}
\newtheorem{fact}[definition]{Fact}
\newcommand{\qed}{\hfill$\Box$\smallskip}
\newenvironment{proof}{\emph{Proof.}}{}
\begin{document} 
\title{A Better Algorithm for Random $k$-SAT}
 
\title{\bf A Better Algorithm for Random $k$-SAT}
 
\author{
Amin Coja-Oghlan\thanks{Supported by EPSRC grant EP/G039070/1.}\\
University of Edinburgh, School of Informatics, Edinburgh EH8 9AB, UK\\
	\texttt{acoghlan@inf.ed.ac.uk}
} 
\date{\today}

\maketitle 

\begin{abstract}
Let $\PHI$ be a uniformly distributed random $k$-SAT formula with $n$ variables and $m$ clauses.
We present a polynomial time algorithm that finds a satisfying assignment of $\PHI$ with high probability
for constraint densities $m/n<(1-\eps_k)2^k\ln(k)/k$, where $\eps_k\rightarrow0$.
Previously no efficient algorithm was known to find solutions with non-vanishing probability
beyond $m/n=1.817\cdot 2^k/k$ [Frieze and Suen, J.\ of Algorithms 1996].\\
\emph{Key words:}
	random structures, efficient algorithms, phase transitions, $k$-SAT.
\end{abstract}

\section{Introduction}

\subsection{Solving random $k$-SAT}

The $k$-SAT problem is well known to be NP-hard for $k\geq3$,
and in fact no algorithm with a sub-exponential worst-case running time is known to
decide whether a given $k$-SAT formula has a satisfying assignment.
Nevertheless, that $k$-SAT is NP-hard merely indicates that no algorithm
can solve \emph{all} possible inputs efficiently.
Therefore, there has been a significant amount of research on \emph{heuristics}
for $k$-SAT, i.e., algorithms that solve ``most'' inputs efficiently, where
the meaning of ``most'' depends on the scope of the respective paper.
While some heuristics for $k$-SAT are very sophisticated,
virtually all of them are based on (at least) one of the following basic paradigms.
\begin{description}
\item[Pure literal rule.]
	If a variable $x$ occurs only positively (resp.\ negatively) in the formula, set it to true (resp.\ false).
	Simplify the formula by substituting the newly assigned  value for $x$ and repeat.
\item[Unit clause propagation.]
	If the formula contains a clause that contains only a single literal (``unit clause''), then set the underlying variable
	so as to satisfy this clause.
	Then simplify the formula and repeat.
\item[Walksat.]
	Initially pick a random assignment.
	Then repeat the following.
	While there is an unsatisfied clause, pick one at random,
		pick a variable occurring in the chosen clause randomly, and flip its value.
\item[Backtracking.]
	Assign a variable $x$, simplify the formula, and recurse.
	If the recursion fails to find a satisfying assignment, assign $x$ the opposite value and recurse.
\end{description}

Heuristics based on these paradigms can be surprisingly successful (given that $k$-SAT is NP-hard)
on certain types of inputs (e.g., \cite{DPLL,BerkMin}).
However,
it remains remarkably simple to generate
formulas that elude all known algorithms/heuristics.
Indeed, the simplest conceivable type of \emph{random} instances does the trick:
let $\PHI$ denote a $k$-SAT formula over the variable set $V=\{x_1,\ldots,x_n\}$
that is obtained by choosing $m$ clauses uniformly at random and independently from the
set of all $(2n)^k$ possible clauses.
Then for a large regime of constraint densities $m/n$ satisfying assignments are known to exist due
to non-constructive arguments, but no algorithm is known to find one in sub-exponential time with a non-vanishing probability.

To be precise, keeping $k$ fixed and letting $m=\lceil rn\rceil$ for a fixed $r>0$,
we say that $\PHI$ has some property \emph{with high probability} (``\whp'')
if the probability that the property holds tends to one as $n\rightarrow\infty$.
Via the (highly non-algorithmic) second moment method and the sharp threshold theorem~\cite{nae,yuval,Ehud}
it can be shown that $\PHI$ has a satisfying assignment \whp\
if $m/n<(1-\eps_k)2^k\ln 2$.
Here $\eps_k$ tends to $0$
for large $k$.
On the other hand, a first moment argument shows that no satisfying assignment exists \whp\ if $m/n>2^k\ln 2$.
In summary, the threshold for $\PHI$ being satisfiable is asymptotically $2^k\ln2$.

But for densities $m/n$ beyond $O(2^k/k)$ no algorithm has been known 
to find a satisfying assignment in 
polynomial time with probability $\Omega(1)$
-- neither on the basis of a rigorous analysis, nor on the basis of experimental or other evidence.
In fact, many algorithms, including Pure Literal, Unit Clause, and DPLL-type algorithms, are known to fail or exhibit an exponential running time beyond $O(2^k/k)$.
There is experimental evidence that the same is true of Walksat.
Indeed, devising an algorithm to solve random formulas with a non-vanishing probability for
densities $m/n$ up to $2^k\omega(k)/k$ for \emph{any}
$\omega(k)\rightarrow\infty$
has been a well-known open problem~\cite{nae,yuval,mick,MontanariGibbs},
which the following theorem resolves.

\begin{theorem}\label{Thm_Fix}
There are a sequence $\eps_k\rightarrow0$ and a polynomial time algorithm \Fix\ 
such that \Fix\ applied to a random formula $\PHI$
with $m/n\leq(1-\eps_k)2^{k}\ln(k)/k$ outputs a satisfying assignment \whp
\end{theorem}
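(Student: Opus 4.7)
The plan is to design \Fix\ as a two-phase algorithm. Phase~1 produces a partial assignment $\sigma$ that leaves only $o(n)$ clauses unsatisfied while modifying only a small set of variables from an initial guess; phase~2 repairs the surviving violations by a greedy local completion. The rationale for this split is that under the all-true assignment a clause of $\PHI$ is unsatisfied iff all its $k$ literals are negative, so the expected number of initially violated clauses is $m\cdot 2^{-k}$, which at our target density is only about $n\ln(k)/k$. Phase~1 therefore must process roughly this many violations without triggering a cascade of new ones.

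For phase~1, I would start from the all-true assignment and process the currently violated clauses in some canonical order. For each violated clause $C$ the algorithm picks a ``pivot'' variable $x \in C$ and flips it from true to false. The pivot rule is the crucial design choice: flipping $x$ turns a previously satisfied clause $C'$ into a violated one precisely when $x$ occurs positively in $C'$ and every other literal of $C'$ is currently false. I would therefore choose the pivot so as to minimise an explicit ``dangerousness'' score counting such fragile positive occurrences, breaking ties in favour of variables that have not yet been touched. To keep the residual distribution of $\PHI$ tractable across this adaptive process, I would use the principle of deferred decisions, exposing each variable's remaining occurrences only when the variable is first considered.

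The main obstacle is a sharp analysis of the cascade of follow-on violations. The plan is to show, via a martingale concentration argument (e.g., Azuma/McDiarmid) on an appropriate potential function combining the current number of violations with a penalty per touched variable, that whenever $m/n<(1-\eps_k)2^k\ln(k)/k$ with $\eps_k\to0$, the expected number of new violations created per pivot flip stays strictly below $1$. Under this subcritical condition the cascade is stochastically dominated by a subcritical branching process, so phase~1 ends with $o(n)$ violated clauses and $o(n)$ touched variables \whp. The precise location of the threshold — and hence the $\ln(k)/k$ improvement over the $O(2^k/k)$ regime achieved by previous algorithms — should emerge from evaluating the expected cascade factor to leading order in $k$. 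Phase~2 then pairs each surviving violated clause with an untouched variable whose flip resolves it without breaking any other clause; this pairing exists \whp\ by a standard matching and union-bound argument, since the bipartite ``conflict graph'' between residual violations and safe flip-candidates is sparse. Putting the two phases together yields \Thm~\ref{Thm_Fix}.
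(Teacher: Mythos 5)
Your high-level decomposition (fix the all-negative clauses greedily via safe flips, then repair the residue by a matching) matches the paper, and the deferred-decisions idea is indeed what makes the adaptive process analyzable. But the core quantitative claim in your Phase~1 analysis is wrong, and this is not a fixable detail: you assert that at density $(1-\eps_k)2^k\ln(k)/k$ ``the expected number of new violations created per pivot flip stays strictly below $1$,'' so the cascade is dominated by a subcritical branching process. At this density, the expected number of clauses in which a uniformly random variable $x$ is the \emph{unique} positive literal is $k2^{-k}m/n \approx (1-\eps)\ln k$, and every such clause becomes unsatisfied when $x$ is set to false. So a typical flip creates $\Theta(\ln k)$ new violations, not fewer than one. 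A branching-factor-below-one argument would cap you at densities $O(2^k/k)$, which is exactly the barrier the earlier algorithms hit; it cannot reach $2^k\ln(k)/k$.

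The paper's essential idea is orthogonal to branching-factor accounting: since the number of ``dangerous'' occurrences of a random variable is roughly Poisson with mean $\omega=(1-\eps)\ln k$, the probability a given variable is \emph{completely} safe (creates zero new violations) is about $e^{-\omega}=k^{\eps-1}$. An all-negative clause offers about $k/2$ candidate variables, so the probability that none of them is safe is about $(1-k^{\eps-1})^{k/2}\le e^{-k^{\eps}/2}$, which is tiny. Hence \Fix\ can almost always pick a variable that creates \emph{no} new violations; the exponentially small fraction of clauses where this fails is what feeds into Phases~2--3, where an analogous ``three safe variables'' construction plus Hall's theorem finishes the job. The $\ln(k)/k$ location of the threshold comes from tuning $\omega=(1-\eps)\ln k$ so that $k^{\eps-1}\cdot(k/2)\to\infty$, i.e.\ so that a safe variable exists \whp\ per clause — not from any cascade factor. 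You would need to replace your subcriticality argument with a ``many variables are $Z$-safe'' estimate (a balls-into-bins empty-bin count conditioned on the exposed process, plus martingale concentration over the $\Theta(n\ln\omega/k)$ steps) to make the proposal sound.
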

\Fix\ is a  combinatorial, local-search type algorithm.
It can be implemented to run in time $O(n+m)^{3/2}$.

The recent paper~\cite{AchACO} provides evidence that beyond density $m/n=2^k\ln(k)/k$
the problem of finding a satisfying assignment
becomes conceptually significantly more difficult (to say the least).
To explain this, we need to discuss a concept that originates from statistical physics. %``replica symmetry breaking''.

\subsection{A digression: replica symmetry breaking}\label{Sec_RSB}

For the last decade random $k$-SAT has been studied by statistical physicists using sophisticated, insightful,
but mathematically highly non-rigorous techniques from the theory of spin glasses.
Their results suggest that below the threshold density $2^k\ln2$ for the existence of satisfying assignments
various other phase transitions take place that affect the performance of algorithms.

To us the most important one is the \emph{dynamic replica symmetry breaking} (dRSB) transition.
Let $S(\PHI)\subset\{0,1\}^V$ be the set of all satisfying assignments of the random formula $\PHI$.
We turn $S(\PHI)$ into a graph by considering $\sigma,\tau\in S(\PHI)$ adjacent
if their Hamming distance equals one.
Very roughly speaking, according to the dRSB hypothesis there is a density $r_{RSB}$ such that for $m/n<r_{RSB}$ the
correlations that shape the set $S(\PHI)$ are purely local,
whereas for densities $m/n>r_{RSB}$ long range correlations occur.
Furthermore, $r_{RSB}\sim 2^k\ln(k)/k$.

Confirming and elaborating on this hypothesis, we recently established
a good part of the dRSB phenomenon rigorously~\cite{AchACO}.
In particular, we proved that 
there is a sequence $\eps_k\rightarrow0$ such that
for $m/n>(1+\eps_k)2^k\ln(k)/k$ the values that the solutions $\sigma\in S(\PHI)$ assign to the
variables are mutually heavily correlated in the following sense.
Let us call a variable $x$ \emph{frozen} in a satisfying assignment $\sigma$ if any satisfying assignment $\tau$ such that
$\sigma(x)\not=\tau(x)$ is at Hamming distance $\Omega(n)$ from $\sigma$.
Then for $m/n>(1+\eps_k)2^k\ln(k)/k$ in all but a $o(1)$-fraction of all solutions $\sigma\in S(\PHI)$
all but an $\eps_k$-fraction of the variables are frozen \whp, where $\eps_k\rightarrow0$.

This suggests that on random formulas with density $m/n>(1+\eps_k)2^k\ln(k)/k$ 
local search algorithms are unlikely to succeed.
For think of the \emph{factor graph},
whose vertices are the variables and the clauses, and where a variable is adjacent to all clauses in which it occurs.
Then a local search algorithm assigns a value to a variable $x$ on the basis of
the values of the variables that have distance $O(1)$ from $x$ in the factor graph.
But in the random formula $\PHI$ with $m/n>(1+\eps_k)2^k\ln(k)/k$
assigning one variable $x$ is likely to impose constraints on the values that can be assigned to variables
at distance $\Omega(\ln n)$ from $x$ in the factor graph.

The above discussion applies to ``large'' values of $k$ (say, $k\geq 10$). %, as does \Thm~\ref{Thm_Fix}.
In fact, non-rigorous arguments as well as experimental evidence~\cite{Lenka} suggest that
the picture is quite different and rather more complicated for ``small'' $k$ (say, $k=3,4,5$).
In this case the various phenomena that occur at (or very near) the point $2^k\ln(k)/k$ for $k\geq10$ appear
to happen at vastly different points in the satisfiable regime.
To keep matters as simple as possible we focus on ``large'' $k$ in this paper.
In particular, no attempt has been made to derive explicit bounds on the numbers
$\eps_k$ in \Thm~\ref{Thm_Fix} for ``small'' values of $k$.

\subsection{Related work}\label{Sec_related}

Quite a few papers deal with efficient algorithms for random $k$-SAT, contributing
either rigorous results, non-rigorous evidence based on physics arguments, or experimental evidence.
Table~\ref{Table_kSAT} summarizes the part of this work that is most relevant to us.
The best rigorous result (prior to this work) is due to Frieze and Suen~\cite{FrSu}, who
proved that ``SCB'' succeeds for densities $\eta_k 2^k/k$, where $\eta_k$ is increasing to $1.817$ as $k\rightarrow\infty$.
SCB can be considered a (restricted) DPLL-algorithm.
More precisely, SCB combines the shortest clause rule, which is a generalization of Unit Clause, with (very limited) backtracking.
Conversely, it is known that DPLL-type algorithms require an exponential running time \whp\ for densities
beyond $O(2^k/k)$~\cite{AchBeameMolloy}.

\begin{table}
\center
\begin{tabular}{|c|c|c|c|}\hline
Algorithm&Density $m/n<\cdots$&Success probability&Ref., year\\\hline
Pure Literal (``PL'')&$o(1)$ as $k\rightarrow\infty$&\whp&\cite{Kim}, 2006\\\hline
Walksat, rigorous&$\frac16\cdot2^k/k^2$&\whp&\cite{Walksat}, 2009\\
Walksat, non-rigorous&$2^k/k$&\whp&\cite{Monasson}, 2003\\\hline
Unit Clause (``UC'')&$\frac12\bcfr{k-1}{k-2}^{k-2}\cdot\frac{2^k}k$&$\Omega(1)$&\cite{ChaoFranco2}, 1990\\\hline
Shortest Clause (``SC'')&$\frac18\bcfr{k-1}{k-3}^{k-3}\frac{k-1}{k-2}\cdot\frac{2^k}k$&\whp&\cite{mick}, 1992\\\hline
SC+backtracking (``SCB'')&$\sim1.817\cdot\frac{2^k}{k}$&\whp&\cite{FrSu}, 1996\\\hline
BP+decimation (``BPdec'')&$\eul\cdot2^k/k$&\whp&\cite{MontanariGibbs}, 2007\\
(non-rigorous)&&&\\\hline
\end{tabular}
\caption{Algorithms for random $k$-SAT}\label{Table_kSAT}
\end{table}

Montanari, Ricci-Tersenghi, and Semerjian~\cite{MontanariGibbs} provide evidence
that Belief Propagation guided decimation may succeed up to density $\eul\cdot2^k/k$.
This algorithm is based on a very different paradigm than the others mentioned in Table~\ref{Table_kSAT}.
The basic idea is to run a message passing algorithm (``Belief Propagation'') to compute for each variable
the marginal probability that this variable takes the value true/false in a uniformly random satisfying assignment.
Then, the decimation step selects a variable, assigns it the value true/false with the corresponding marginal
probability, and simplifies the formula.
Ideally, repeating this procedure will yield a satisfying assignment,
provided that Belief Propagation keeps yielding the correct marginals.
Proving (or disproving) this remains a major open problem.

Survey Propagation is a modification of Belief Propagation
that aims to approximate the marginal probabilities induced by a particular (non-uniform) probability distribution
on the set of satisfying assignments~\cite{BMZ}.
It can be combined with a decimation procedure as well to obtain a heuristic for \emph{finding}
a satisfying assignment.
There is (non-rigorous) evidence that for most of the satisfiable regime (actually $m/n<2^k\ln2-O(1)$)
Belief and Survey Propagation are essentially equivalent~\cite{pnas}.
Hence, there is no evidence that Survey Propagation finds satisfying assignments
beyond $O(2^k/k)$ for general $k$.

In summary, various algorithms are known/appear to succeed for densities $c\cdot 2^k/k$,
where the constant $c$ depends on the particulars of the algorithm.
But I am not aware of prior evidence (either rigorous results, non-rigorous arguments, or experiments)
that some algorithm succeeds for densities $m/n=2^k\omega(k)/k$ with $\omega(k)\rightarrow\infty$.

The discussion so far concerns the case of general 
	$k$.
In addition, a large number of papers deal with the case $k=3$.
Flaxman~\cite{Flaxman} provides a survey.
Currently the best rigorously analyzed algorithm for random 3-SAT is known
to succeed up to $m/n=3.52$~\cite{HajiSorkin}.
This is also the best known lower bound on the 3-SAT threshold.
The best current upper bound is $4.506$~\cite{Dubois}, and
non-rigorous arguments suggest the threshold to be $\approx4.267$~\cite{BMZ}.
As mentioned in \Sec~\ref{Sec_RSB}, there is non-rigorous evidence that the structure of the set of all
satisfying assignment evolves differently in random 3-SAT than in random $k$-SAT for ``large'' $k$.
This may be why
experiments suggest that Survey Propagation guided decimation for 3-SAT succeeds for densities
$m/n$ up to $4.2$~\cite{BMZ}.

\subsection{Techniques and outline}

The algorithms for random $k$-SAT from~\cite{ChaoFranco2,mick,FrSu} all follow a very simple scheme:
\begin{quote}
Initially all variables are unassigned.
In each step apply some rule
(referring to the previously assigned variables/values only)
to select a currently unassigned variable.
Assign the selected variable for good, simplify the formula, and proceed.
\end{quote}
The Unit Clause algorithm is the prototypical example:
the underlying rule is to check if there is a clause that has $k-1$ false literals due previous decisions;
if so, the algorithm sets the last unassigned variable so as to satisfy the clause.
Otherwise the algorithm picks an unassigned variable randomly and assigns it a random value.
(The algorithm SCB from~\cite{FrSu} deviates from this pattern slightly as it may backtrack to revise previous assignments,
but this happens at most $O(\ln^2 n)$ times \whp)

The analysis of such algorithms is based on the ``method of deferred decisions''.
Suppose we apply the algorithm to a random formula and condition on the occurrences of the variables assigned
in the first $t$ steps.
Assume that these are precisely the variables $x_1,\ldots,x_t$.
Then all the literals whose underlying variable is none of $x_1,\ldots,x_t$ remain
stochastically independent and uniformly distributed over set of the remaining $2(n-t)$ literals.
This fact makes it possible to either model the execution of the algorithm
by differential equations~\cite{ChaoFranco2,mick}, or by a Markov chain~\cite{FrSu}.
Of course, this type of analysis crucially exploits the fact that the algorithm (almost) never revises previous decisions.

Instead of assigning one variable at a time, the Walksat algorithm starts from a complete
(e.g., randomly chosen) assignment of truth values to all the variables.
Of course, this initial assignment is very unlikely to be satisfying.
Hence, while there is an unsatisfied clause, the algorithm picks one of them at random and flips the value
of a randomly chosen variable occurring in that clause.
Since Walksat actually starts from a complete assignment and may flip the value of the same variable several times,
the method of deferred decisions does not apply.
In fact, although experimental (and non-rigorous) evidence suggests that Walksat finds a satisfying assignment
in linear time \whp\ for $m/n<2^k/k$, the best current rigorous analysis only shows this for $m/n<2^k/(6k^2)$~\cite{Walksat}.
(The proof is based on relating Walksat to a branching process.)

The algorithm \Fix\ for \Thm~\ref{Thm_Fix} is similar to Walksat in that it starts with a complete assignment --
say, for the sake of concreteness, the one that sets all variables to true.
The number of unsatisfied clauses is $(1+o(1))2^{-k}m$ \whp\
To reach a satisfying assignment, \Fix\ will have to flip (at least) one variable from each of these clauses.
But in contrast to Walksat, \Fix\ does not choose this variable randomly.
Instead \Fix\ applies a greedy rule:
whenever possible choose a variable $x$ so that flipping $x$ does not generate new unsatisfied clauses.
Thus, one could consider \Fix\ a greedy version of Walksat.
We will describe the algorithm precisely in \Sec~\ref{Sec_Fix}.

The analysis of \Fix\ is based on a blend of probabilistic methods (e.g., martingales) and combinatorial arguments.
We can employ the method of deferred decisions to a certain extent:
the analysis ``pretends'' that the algorithm exposes the literal occurrences of the random input formula
only when it becomes strictly necessary, so that the unexposed ones remain ``random''.
However, the picture is not as clean as in the analysis of, say, Unit Clause.
The reason is that we will have to track certain rather non-trivial random variables throughout the process,
for which we will resort to a direct combinatorial analysis.
Section~\ref{Sec_Fix} contains an outline of the analysis, the details of which are carried out in \Sec~\ref{Sec_process}--\ref{Sec_matching}.
Before we come to this, we need a few preliminaries.

\section{Preliminaries and notation}\label{Sec_Pre}

In this section we introduce some notation and present a few basic facts.
Although most of them (or closely related ones) are well known, we present
some of the proofs for the sake of completeness.

\subsection{Balls and bins}

Consider a balls and bins experiment where $\mu$ distinguishable balls
are thrown independently and uniformly at random into $n$ bins. % (``Maxwell-Boltzmann'').
Thus, the probability of each distribution of balls into bins equals $n^{-\mu}$.

\begin{lemma}\label{Lemma_BallsIntoBins}
Let $\cZ(\mu,n)$ be the number of empty bins.
Let $\lambda=n\exp(-\mu/n)$.
Then
	$\pr\brk{\cZ(\mu,n)\leq \lambda/2}\leq O(\sqrt{\mu})\cdot\exp(-\lambda/8)$
as $n\rightarrow\infty$.
\end{lemma}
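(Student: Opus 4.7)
The plan is a Poissonization argument. I would introduce independent Poisson random variables $Y_1,\ldots,Y_n$ of mean $\mu/n$ and let $\cZ'=\#\set{i:Y_i=0}$. The standard Poissonization identity says that conditional on the event $\cT=\set{\sum_{i=1}^{n}Y_i=\mu}$, the vector $(Y_1,\ldots,Y_n)$ is distributed exactly as the occupancy vector obtained by throwing $\mu$ balls uniformly and independently into the $n$ bins. Consequently $\cZ'$ given $\cT$ has the same distribution as $\cZ(\mu,n)$, so
$$\pr\brk{\cZ(\mu,n)\leq\lambda/2}=\frac{\pr\brk{\cZ'\leq\lambda/2\wedge\cT}}{\pr\brk\cT}\leq\frac{\pr\brk{\cZ'\leq\lambda/2}}{\pr\brk\cT}.$$

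Next I would bound the numerator. Each $Y_i$ vanishes with probability $\exp(-\mu/n)$ independently, hence $\cZ'\sim\Bin(n,\exp(-\mu/n))$ with mean exactly $\lambda=n\exp(-\mu/n)$. Applying the multiplicative Chernoff lower-tail bound $\pr\brk{X\leq(1-\delta)\Erw X}\leq\exp(-\delta^{2}\Erw X/2)$ with $\delta=1/2$ gives
$$\pr\brk{\cZ'\leq\lambda/2}\leq\exp(-\lambda/8).$$

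The denominator is where the $\sqrt{\mu}$ factor comes from. The sum $\sum_{i=1}^{n}Y_i$ is Poisson with mean $\mu$, so
$$\pr\brk\cT=\frac{\eul^{-\mu}\mu^{\mu}}{\mu!}=\Omega\bc{\mu^{-1/2}}$$
by Stirling's formula. Substituting the two estimates into the first display yields
$$\pr\brk{\cZ(\mu,n)\leq\lambda/2}\leq O(\sqrt{\mu})\cdot\exp(-\lambda/8),$$
which is the claim.

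I do not anticipate a serious obstacle. The Poissonization identity is classical, and both the Chernoff bound and the Stirling estimate are routine. The only things to be careful about are (i) invoking the Chernoff bound in the multiplicative form so that the exponent comes out as $-\lambda/8$ rather than something weaker, and (ii) using Stirling precisely enough to recover a $\Omega(\mu^{-1/2})$ lower bound on $\pr\brk\cT$ rather than a coarser polynomial factor. The argument is essentially the shortest route to the stated bound; a direct Azuma-Hoeffding bound on the Doob martingale (revealing one ball at a time) would give $\exp(-\Omega(\lambda^{2}/\mu))$, which is weaker than $\exp(-\lambda/8)$ in the relevant regime $\lambda=o(\mu)$, so Poissonization is genuinely needed.
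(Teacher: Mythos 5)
Your proof is correct and follows exactly the same route as the paper's: Poissonize, condition on the total being $\mu$, bound the conditioning event via Stirling to pay the $O(\sqrt{\mu})$ factor, and hit the unconditional $\Bin(n,\exp(-\mu/n))$ variable with the lower-tail Chernoff bound at $\delta=1/2$ to get $\exp(-\lambda/8)$. Even the remark that a direct martingale bound would only give $\exp(-\Omega(\lambda^2/\mu))$ aligns with why the paper needed Poissonization here; there is nothing to add or correct.
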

The proof is based on the following \emph{Chernoff bound}
on the tails of a binomially distributed random variable $X$ with mean $\lambda$
(see~\cite[pages 26--28]{JLR}): for any $t>0$
\begin{eqnarray}\label{eqChernoff}
\pr(X\geq\lambda+t)\leq\exp\bc{-\frac{t^2}{2(\lambda+t/3)}}
   &\textrm{ and }&\pr(X\leq\lambda-t)\leq\exp\bc{-\frac{t^2}{2\lambda}}.
\end{eqnarray}

\medskip
\noindent\emph{Proof of \Lem~\ref{Lemma_BallsIntoBins}.}
Let $X_i$ be the number of balls in bin $i$. % for each $1\leq i\leq n$.
In addition, let $(Y_i)_{1\leq i\leq n}$ be a family of mutually independent Poisson variables with mean $\mu/n$,
and let $Y=\sum_{i=1}^nY_i$.
Then $Y$ has a Poisson distribution with mean $\mu$.
Therefore, Stirling's formula shows $\pr\brk{Y=\mu}=\Theta(\mu^{-1/2})$.
Furthermore, the \emph{conditional} joint distribution of $Y_1,\ldots,Y_n$ given that $Y=\mu$ coincides
with the joint distribution of $X_1,\ldots,X_n$ (see, e.g., \cite[Section~2.6]{Durrett}).
As a consequence,
	\begin{eqnarray}\nonumber
	\pr\brk{\cZ(\mu,n)\leq \lambda/2}&=&\pr\brk{\abs{\cbc{i\in\brk{n}:Y_i=0}}<\lambda/2|Y=\mu}\\
		&\leq&\frac{\pr\brk{\abs{\cbc{i\in\brk{n}:Y_i=0}}<\lambda/2}}{\pr\brk{Y=\mu}}=
			 O(\sqrt{\mu})\cdot\pr\brk{\abs{\cbc{i\in\brk{n}:Y_i=0}}<\lambda/2}.
			\label{eqBallsIntoBins}
	\end{eqnarray}
Finally, since $Y_1,\ldots,Y_n$ are mutually independent and
$\pr\brk{Y_i=0}=\lambda/n$ for all $1\leq i\leq n$, the number of indices
$i\in\brk{n}$ such that $Y_i=0$ is binomially distributed with mean $\lambda$.
Thus, the assertion follows from~(\ref{eqBallsIntoBins}) and  the Chernoff bound~(\ref{eqChernoff}).
\qed

\subsection{Random $k$-SAT formulas}

Throughout the paper we let $V=V_n=\{x_1,\ldots,x_n\}$ be a set of propositional variables.
If $Z\subset V$, then $\bar Z=\{\bar x:x\in Z\}$ contains the corresponding set of negative literals.
Moreover, if $l$ is a literal, then $|l|$ signifies the underlying propositional variable.
If $\mu$ is an integer, let $\brk\mu=\{1,2,\ldots,\mu\}$.

We let $\Omega_k(n,m)$ be the set of all $k$-SAT formulas
with variables from $V=\{x_1,\ldots,x_n\}$ that contain precisely $m$ clauses.
More precisely, we consider the formula an ordered $m$-tuple of clauses
and each clause  an ordered $k$-tuples of literals, allowing both
literals to occur repeatedly in one clause and clauses to occur repeatedly in the formula.
Let $\Sigma_k(n,m)$ be the power set of $\Omega_k(n,m)$,
and let $\pr=\pr_k(n,m)$ be the uniform probability measure
(which assigns probability $(2n)^{-km}$ to each formula).
We obtain a probability space $(\Omega_k(n,m),\Sigma_k(n,m),\pr)$.

Throughout the paper we denote a random element of $\Omega_k(n,m)$ by $\PHI$.
Unless otherwise specified, $\PHI$ is uniformly distributed. % (i.e., its  distribution is $\pr_k(n,m)$).
In addition, we use $\Phi$ to denote specific (i.e., non-random) elements of $\Omega_k(n,m)$.
If $\Phi\in\Omega_k(n,m)$, then $\Phi_i$ denotes the $i$th clause of $\Phi$, and
$\Phi_{ij}$ denotes the $j$th literal of $\Phi_i$.

\begin{lemma}\label{Lemma_moment}
For any $\delta>0$ and any $k\geq3$ there is $n_0>0$ such that for all $n>n_0$ the following is true.
Suppose that $m\geq\delta n$ 
and that $X_i:\Omega_k(n,m)\rightarrow\{0,1\}$ is a random variable for each $i\in\brk{m}$.
Let $\mu=\uppergauss{\ln^2n}$.
If there is a number $\lambda\geq\delta$ such that for any set $M\subset\brk{m}$ of size $\mu$ we have
	$$\Erw\brk{\prod_{i\in M}X_i}\leq\lambda^\mu,\mbox{ then }\quad\pr\brk{\sum_{i=1}^mX_i\geq(1+\delta)\lambda m}<n^{-10}.$$
\end{lemma}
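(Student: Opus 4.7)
The plan is to apply the high-moment Markov method to $S := \sum_{i=1}^{m} X_i$. The starting point is the combinatorial identity
\[
\binom{S}{\mu} \;=\; \sum_{M \subset \brk{m},\,|M|=\mu}\;\prod_{i \in M} X_i,
\]
which holds because each $X_i \in \{0,1\}$, so $\binom{S}{\mu}$ literally counts the $\mu$-subsets of $\brk{m}$ on which every $X_i$ equals $1$. Taking expectations and invoking the hypothesis on each of the $\binom{m}{\mu}$ summands gives $\Erw\brk{\binom{S}{\mu}} \leq \binom{m}{\mu}\,\lambda^\mu$.

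Next, I would set $s := \uppergauss{(1+\delta)\lambda m}$. Because $m \geq \delta n$ and $\lambda \geq \delta$, we have $s \geq \delta^2 n / 2$, which for large $n$ greatly exceeds $\mu = \uppergauss{\ln^2 n}$. Since $t \mapsto \binom{t}{\mu}$ is strictly increasing on integers $t \geq \mu$, the event $\{S \geq s\}$ coincides with $\{\binom{S}{\mu} \geq \binom{s}{\mu}\}$, so Markov's inequality yields
\[
\pr\brk{S \geq s} \;\leq\; \frac{\binom{m}{\mu}\,\lambda^\mu}{\binom{s}{\mu}}.
\]
Writing $\binom{m}{\mu}/\binom{s}{\mu} = \prod_{j=0}^{\mu-1}(m-j)/(s-j)$ and using $\mu = O(\ln^2 n)$ together with $m,s = \Omega(n)$, each factor equals $(m/s)(1+o(1/\mu))$, so the ratio is at most $(1+o(1))(m/s)^\mu$. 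Substituting $s \geq (1+\delta)\lambda m$ and cancelling the $\lambda^\mu$ in the numerator, one arrives at
\[
\pr\brk{S \geq s} \;\leq\; (1+o(1))(1+\delta)^{-\mu} \;=\; \exp\bc{-\Omega(\ln^2 n)} \;=\; o(n^{-10}).
\]

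I do not anticipate a genuine obstacle: the hypothesis is tailor-made for this Markov-with-high-falling-moment approach, and $\mu = \Theta(\ln^2 n)$ is comfortably large enough both to absorb the $1+o(1)$ slack from the binomial-ratio estimate and to beat any polynomial tail. The only point requiring mild care is that in $\prod_{j=0}^{\mu-1}(m-j)/(s-j)$ the factors are monotone in $j$ in opposite directions depending on whether $m \geq s$ or $m < s$; in either regime, however, the uniform bound $\binom{m}{\mu}/\binom{s}{\mu} \leq (1+o(1))(m/s)^\mu$ still holds since $\mu^2/\min(s,m) = O(\ln^4 n / n) = o(1)$.
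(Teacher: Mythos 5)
Your proposal is correct and follows essentially the same route as the paper: both identify $\binom{S}{\mu}=\sum_{|M|=\mu}\prod_{i\in M}X_i$, bound its expectation by $\binom{m}{\mu}\lambda^\mu$, apply Markov, and estimate $\binom{m}{\mu}/\binom{s}{\mu}$. The only cosmetic difference is in that last estimate, where the paper uses the clean pointwise bound $\binom{m}{\mu}/\binom{L}{\mu}\leq\bigl(m/(L-\mu)\bigr)^\mu$ to reach $(1+\delta/2)^{-\mu}$ directly, while you derive $(1+o(1))(m/s)^\mu$ asymptotically; both immediately give $\exp(-\Omega(\ln^2 n))<n^{-10}$.
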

\begin{proof}
Let $\cM$ be the number of sets $M\subset\brk{m}$ of size $\mu$ such that $\prod_{i\in M}X_i=1$.
Then
	$\Erw\brk{\cM}%=\sum_{M\subset\brk{m}:|M|=\mu}\Erw\brk{\prod_{i\in M}X_i}
		\leq\bink{m}{\mu}\lambda^\mu.$
If $X=\sum_{i=1}^mX_i\ge L=\lceil(1+\delta)\lambda m\rceil$,
then $\cM\geq\bink{L}{\mu}$.
Consequently, by Markov's inequality
	\begin{eqnarray*}
	\pr\brk{X\geq L}&\leq&\pr\brk{\cM\geq\bink{L}\mu}\leq\frac{\Erw\brk{\cM}}{\bink{L}\mu}
		\leq\frac{\bink{m}{\mu}\lambda^{\mu}}{\bink{L}{\mu}}\leq\bcfr{\lambda m}{L-\mu}^\mu
			\leq\bcfr{\lambda m}{(1+\delta)\lambda m-\mu}^\mu.
	\end{eqnarray*}
Since $\lambda m\geq\delta^2n$ 
we see that $(1+\delta)\lambda m-\mu\geq(1+\delta/2)\lambda m$ for sufficiently large $n$.
Hence,
	$\pr\brk{X\geq L}\leq(1+\delta/2)^{-\mu}<n^{-10}$
for large enough $n$.
\qed\end{proof}

Although we allow variables to appear repeatedly in the same clause, the following lemma
shows that this occurs very rarely \whp\

\begin{lemma}\label{Lemma_double}
Suppose that $m=O(n)$.
Then \whp\ there are at most $\ln n$ indices $i\in\brk{m}$ such that one of the following is true.
\begin{enumerate}
\item There are $1\leq j_1<j_2\leq k$ such that $|\PHI_{ij_1}|=|\PHI_{ij_2}|$.
\item There is $i'\not=i$ and indices $j_1\not=j_2$, $j_1'\not=j_2'$ such that
		$|\PHI_{ij_1}|=|\PHI_{i'j_1'}|$ and $|\PHI_{ij_2}|=|\PHI_{i'j_2'}|$.
\end{enumerate}
Furthermore, \whp\ no variable occurs in more than $\ln^2n$ clauses.
\end{lemma}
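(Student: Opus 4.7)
The plan is to handle the three assertions separately by elementary moment/Chernoff arguments, using the fact that for uniformly random $\PHI \in \Omega_k(n,m)$ the literals $\PHI_{ij}$ are mutually independent and each underlying variable $|\PHI_{ij}|$ is uniformly distributed on $V$.

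First, for assertion~1, let $A_i$ denote the event that clause $i$ contains two distinct positions $j_1 < j_2$ with $|\PHI_{ij_1}| = |\PHI_{ij_2}|$. A union bound over the $\binom{k}{2}$ pairs of positions gives $\pr\brk{A_i} \leq \binom{k}{2}/n$, whence the expected number of bad indices is at most $m\binom{k}{2}/n = O(1)$ since $m = O(n)$. Markov's inequality then bounds the probability that this count exceeds $\ln n$ by $O(1/\ln n) = o(1)$.

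For assertion~2, let $B_i$ be the event that some $i' \neq i$ shares at least two distinct variables with clause $i$. For fixed $i \neq i'$ and a fixed choice of position pairs $(j_1,j_2)$ in clause $i$ and $(j_1',j_2')$ in clause $i'$ with $j_1 \neq j_2$ and $j_1' \neq j_2'$, the probability that $|\PHI_{ij_1}| = |\PHI_{i'j_1'}|$ and $|\PHI_{ij_2}| = |\PHI_{i'j_2'}|$ equals $n^{-2}$ by independence of the underlying variables. Summing over $i' \neq i$ and the $O(k^4)$ choices of position quadruples yields $\pr\brk{B_i} = O(mk^4/n^2) = O(1/n)$, hence $\Erw\brk{\sum_i \mathbf{1}\{B_i\}} = O(1)$, and Markov again gives the $\ln n$ bound.

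For assertion~3, let $D_x$ count the number of clauses of $\PHI$ containing the variable $x \in V$. Writing $D_x = \sum_{i=1}^m \mathbf{1}\{x \in \PHI_i\}$ expresses $D_x$ as a sum of $m$ independent Bernoulli variables each with parameter $1 - (1 - 1/n)^k \leq k/n$, so $D_x$ is stochastically dominated by $\mathrm{Bin}(m, k/n)$, whose mean $\mu = km/n = O(1)$. The standard tail estimate $\pr\brk{\mathrm{Bin}(m, k/n) \geq t} \leq (e\mu/t)^t$ applied with $t = \ln^2 n$ gives $\pr\brk{D_x > \ln^2 n} \leq (O(1)/\ln^2 n)^{\ln^2 n} = n^{-\omega(1)}$, and a union bound over the $n$ variables of $V$ concludes the argument.

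No serious obstacle arises here: each relevant expectation is already $O(1)$ while the target bounds $\ln n$ and $\ln^2 n$ tend to infinity, so the crudest moment/Chernoff estimate suffices. The only mild points of care are counting the position quadruples correctly in assertion~2, and in assertion~3 observing that $D_x$ is dominated by, rather than equal to, the number of literal occurrences of $x$, since a single clause may contain $x$ more than once.
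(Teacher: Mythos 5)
Your proposal is correct and follows essentially the same approach as the paper: union bounds on expectations plus Markov's inequality for the first two assertions, and a binomial/Chernoff-type tail bound with a union bound over variables for the third. The only cosmetic difference is that you invoke the multiplicative form $(e\mu/t)^t$ of the tail estimate where the paper cites its stated Chernoff bound, and both suffice comfortably.
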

\begin{proof}
Let $X$ be the number of such indices $i$ for which 1.\ holds.
For each $i\in\brk{m}$ and any pair $1\leq j_1<j_2\leq k$ the probability that
$|\PHI_{ij_1}|=|\PHI_{ij_2}|$ is $1/n$, because each of the two variables is chosen uniformly at random.
Hence, by the union bound the probability that there are $j_1,j_2$ such that $|\PHI_{ij_1}|=|\PHI_{ij_2}|$
is at most $\bink{k}2/n$.
Consequently,
	$\Erw\brk{X}\leq m\bink{k}2/n=O(1),$
and thus $X\leq\frac12\ln n$ \whp\ by Markov's inequality.

Let $Y$ be the number of $i\in\brk{m}$ for which 2.\ is true.
For any given $i,i',j_1,j_1',j_2,j_2'$ the probability that $|\PHI_{ij_1}|=|\PHI_{i'j_1'}|$ and $|\PHI_{ij_2}|=|\PHI_{i'j_2'}|$ is $1/n^2$.
Furthermore, there are $m^2$ ways to choose $i,i'$ and then $(k(k-1))^2$ ways to choose $j_1,j_1',j_2,j_2'$.
Hence,
	$\Erw\brk{Y}\leq m^2k^4n^{-2}=O(1).$
Thus, $X\leq\frac12\ln n$ \whp\ by Markov's inequality.

Finally, for any variable $x$ the number of indices $i\in\brk{m}$ such that $x$ occurs in $\PHI_i$ has a binomial distribution
$\Bin(m,1-(1-1/n)^k)$.
Since the mean $m\cdot(1-(1-1/n)^k)$ is $O(1)$, the Chernoff bound~(\ref{eqChernoff}) implies that
the probability that $x$ occurs in more than $\ln^2n$ clauses is $o(1/n)$.
Hence, by the union bound there is no variable with this property \whp
\qed\end{proof}

Recall that a \emph{filtration} is a sequence $(\cF_t)_{0\leq t\leq \tau}$ of $\sigma$-algebras
$\cF_t\subset\Sigma_k(n,m)$ such that $\cF_t\subset\cF_{t+1}$ for all $0\leq t<\tau$.
For a random variable $X$ we let $\Erw\brk{X|\cF_t}$ denote
the \emph{conditional expectation} (which is a random variable).
Remember that $\pr\brk{\cdot|\cF_t}$ assigns a probability measure
	$\pr\brk{\cdot|\cF_t}(\Phi)$ to any $\Phi\in\Omega_k(n,m)$,
namely
	$$\pr\brk{\cdot|\cF_t}(\Phi):A\in\Sigma_k(n,m)\mapsto\Erw\brk{\vecone_A|\cF_t}(\Phi),$$
where $\vecone_A(\varphi)=1$ if $\varphi\in A$ and $\vecone_A(\varphi)=0$ otherwise.

\begin{lemma}\label{Lemma_filt}
Let $(\cF_t)_{0\leq t\leq \tau}$ be a filtration %on the probability space $\cF_k(n,m)$
and let $(X_t)_{1\leq t\leq \tau}$ be a sequence of random variables such that each $X_t$ is $\cF_t$-measurable.
Assume that there are numbers $\xi_t\geq0$ such that
	$\Erw\brk{X_t|\cF_{t-1}}\leq\xi_t$ for all $t$. %$1\leq t\leq\tau$.
Then $\Erw[\prod_{1\leq t\leq \tau}X_t|\cF_0]\leq\prod_{1\leq t\leq \tau}\xi_t$.
\end{lemma}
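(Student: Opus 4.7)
The plan is to proceed by induction on $\tau$, the main tool being the tower property together with the fact that ``$\cF_{\tau-1}$-measurable factors may be pulled out'' of a conditional expectation given $\cF_{\tau-1}$. The base case $\tau=1$ is just the hypothesis $\Erw[X_1|\cF_0]\leq\xi_1$ (note $\cF_0\subset\cF_0$).

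For the inductive step, I would first observe that for every $t\leq\tau-1$ the variable $X_t$ is $\cF_t$-measurable, and since $\cF_t\subset\cF_{\tau-1}$ it is also $\cF_{\tau-1}$-measurable. Consequently the partial product $P_{\tau-1}:=\prod_{t=1}^{\tau-1}X_t$ is $\cF_{\tau-1}$-measurable. Hence
$$\Erw\brk{\prod_{t=1}^{\tau}X_t\,\bigg|\,\cF_{\tau-1}}=P_{\tau-1}\cdot\Erw\brk{X_\tau|\cF_{\tau-1}}\leq\xi_\tau\cdot P_{\tau-1},$$
where the inequality uses $\Erw[X_\tau|\cF_{\tau-1}]\leq\xi_\tau$ together with $P_{\tau-1}\geq0$ (which holds in the intended application since the $X_t$ take values in $\{0,1\}$; one should mention that this non-negativity is implicit in the statement, as otherwise the inequality cannot propagate across the expectation). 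Applying $\Erw[\,\cdot\,|\cF_0]$ to both sides and using the tower property $\Erw[\Erw[\,\cdot\,|\cF_{\tau-1}]\,|\,\cF_0]=\Erw[\,\cdot\,|\cF_0]$ (since $\cF_0\subset\cF_{\tau-1}$), together with monotonicity of conditional expectation for non-negative integrands, yields
$$\Erw\brk{\prod_{t=1}^{\tau}X_t\,\bigg|\,\cF_0}\leq\xi_\tau\cdot\Erw\brk{\prod_{t=1}^{\tau-1}X_t\,\bigg|\,\cF_0}.$$
The inductive hypothesis applied to the shortened filtration $(\cF_t)_{0\leq t\leq\tau-1}$ and the sequence $(X_t)_{1\leq t\leq\tau-1}$ bounds the right-hand side by $\xi_\tau\cdot\prod_{t=1}^{\tau-1}\xi_t=\prod_{t=1}^{\tau}\xi_t$, completing the induction.

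There is no real obstacle beyond the bookkeeping; the only delicate point is ensuring the monotonicity step is legitimate, which requires the $X_t$ (and hence $P_{\tau-1}$) to be non-negative. This is harmless for the present paper, where the lemma will be instantiated with indicator variables, but strictly speaking the statement should be read with the tacit hypothesis $X_t\geq0$. Under that reading the argument is entirely standard and requires no calculation beyond the two displayed lines.
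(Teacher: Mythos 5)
Your proof is correct and is essentially identical to the paper's: both induct on $\tau$, pull the $\cF_{\tau-1}$-measurable partial product out of the inner conditional expectation, apply the hypothesis $\Erw[X_\tau|\cF_{\tau-1}]\leq\xi_\tau$, and then use the tower property to reduce to the shorter product. Your explicit remark that the $X_t$ must be non-negative for the inequality to propagate is a valid observation (the paper leaves this tacit, relying on the fact that in every application the $X_t$ are $\{0,1\}$-valued).
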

\begin{proof}
For $1\leq s\leq\tau$ we let $Y_s=\prod_{t=1}^sX_t$.
Let $s>1$.
Since $Y_{s-1}$ is $\cF_{s-1}$-measurable, we obtain
	\begin{eqnarray*}
	\Erw\brk{Y_s|\cF_0}&=&\Erw\brk{Y_{s-1}X_s|\cF_0}
		=\Erw\brk{\Erw\brk{Y_{s-1}X_s|\cF_{s-1}}|\cF_0}=
			\Erw\brk{Y_{s-1}\Erw\brk{X_s|\cF_{s-1}}|\cF_0}
			\leq\xi_s\Erw\brk{Y_{s-1}|\cF_0},
	\end{eqnarray*}
whence the assertion follows by induction.
\qed\end{proof}
We also need the following tail bound (``Azuma-Hoeffding'', e.g.~\cite[p.~37]{JLR}).

\begin{lemma}\label{Azuma}
Let $(M_t)_{0\leq t\leq\tau}$ be a martingale such that $M_0=\Erw\brk{M_\tau}$.
Suppose that $|M_t-M_{t-1}|\leq c_t$ for all $1\leq t\leq\tau$.
Then  for any $\lambda>0$
	$\pr\brk{|M_\tau-M_0|>\lambda}\leq\exp\brk{-\lambda^2/(2\sum_{t=1}^\tau c_t^2)}.$
\end{lemma}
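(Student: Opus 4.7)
My plan is to run the standard exponential-moment (Chernoff) argument on the martingale. Define the differences $D_t := M_t - M_{t-1}$, so that $|D_t| \leq c_t$ by hypothesis and $\Erw\brk{D_t \mid \cF_{t-1}} = 0$ by the martingale property, where $(\cF_t)_{0 \leq t \leq \tau}$ is the filtration with respect to which $(M_t)$ is adapted. The first step is Markov's inequality applied to the non-negative random variable $\exp(h(M_\tau - M_0))$ for a parameter $h > 0$, giving
\begin{equation*}
\pr\brk{M_\tau - M_0 > \lambda} \;\leq\; e^{-h\lambda}\,\Erw\brk{\prod_{t=1}^\tau e^{hD_t}}.
\end{equation*}

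The heart of the argument is the Hoeffding lemma for bounded, mean-zero variables: if $\Erw X = 0$ and $|X| \leq c$, then $\Erw\brk{e^{hX}} \leq e^{h^2 c^2 / 2}$. I would prove this by writing each $x \in [-c,c]$ as the convex combination $x = \frac{c-x}{2c}(-c) + \frac{c+x}{2c}\, c$, so that convexity of the exponential yields the pointwise bound $e^{hx} \leq \frac{c-x}{2c} e^{-hc} + \frac{c+x}{2c} e^{hc}$. Taking expectations annihilates the linear term, leaving $\Erw\brk{e^{hX}} \leq \cosh(hc) \leq e^{h^2 c^2/2}$, the last inequality following from a termwise comparison of Taylor series.

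Applied conditionally on $\cF_{t-1}$ (with respect to which $D_t$ has mean zero and is bounded in absolute value by $c_t$), this gives $\Erw\brk{e^{hD_t} \mid \cF_{t-1}} \leq e^{h^2 c_t^2/2}$ almost surely. Now I would telescope exactly as in the inductive proof of \Lem~\ref{Lemma_filt}: since $\prod_{t<s} e^{hD_t}$ is $\cF_{s-1}$-measurable, the tower property gives
\begin{equation*}
\Erw\brk{\prod_{t=1}^\tau e^{hD_t}} \;\leq\; \prod_{t=1}^\tau e^{h^2 c_t^2/2} \;=\; \exp\bc{\frac{h^2}{2}\sum_{t=1}^\tau c_t^2}.
\end{equation*}
Plugging this into the Markov bound and optimising over $h$ by taking $h = \lambda / \sum_{t=1}^\tau c_t^2$ delivers the one-sided estimate $\pr\brk{M_\tau - M_0 > \lambda} \leq \exp\bc{-\lambda^2/(2\sum_{t=1}^\tau c_t^2)}$. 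Running the identical argument on the martingale $(-M_t)$ (whose differences are still bounded by $c_t$) handles the lower tail, and a union bound yields the two-sided statement.

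There is essentially no obstacle: the argument is entirely classical. The only ingredient that is not completely mechanical is the Hoeffding lemma for mean-zero bounded variables, and even that reduces to two applications of convexity — the linear interpolation bound on $e^{hx}$ over $[-c,c]$, and the Taylor comparison $\cosh(y) \leq e^{y^2/2}$.
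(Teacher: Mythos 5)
The paper does not prove this lemma: it is quoted verbatim as a known result with a pointer to \cite[p.~37]{JLR}, so there is no ``paper's proof'' to compare against. Your argument is the standard exponential-moment proof of Azuma--Hoeffding and all the pieces are correct: the Markov step, the conditional Hoeffding lemma $\Erw\brk{e^{hD_t}\mid\cF_{t-1}}\leq e^{h^2c_t^2/2}$ via convexity and $\cosh(y)\leq e^{y^2/2}$, the telescoping via the tower property (indeed the same mechanism as \Lem~\ref{Lemma_filt}), and the optimisation $h=\lambda/\sum c_t^2$. One small bookkeeping point: the union bound over the two tails delivers $\pr\brk{|M_\tau-M_0|>\lambda}\leq 2\exp\brk{-\lambda^2/(2\sum_{t=1}^\tau c_t^2)}$, with a prefactor of $2$, which is in fact the form stated in \cite[p.~37]{JLR}. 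The paper's statement drops that factor; your proof does not establish the droppped-factor version, and you should either state the conclusion with the factor of $2$ (which is all that is ever used downstream, since the applications are ``$=o(n)$ \whp'' type conclusions) or note explicitly that the constant is being absorbed.
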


Finally, we need the following bound on the number of clauses that have ``few'' positive literals in total
but contain at least one positive variable from a ``small'' set.

\begin{lemma}\label{Lemma_firstMoment}
There is a constant $\alpha>0$ such that for all
$k\geq 3$ and $m/n\leq 2^kk^{-1}\ln k$ the following is true.
Let $1\leq l\leq \sqrt{k}$ and set $\delta=\alpha k^{-4l}$.
For a set $Z\subset V$ let $X_Z$ be the number of indices $i\in\brk{m}$ such
that $\PHI_i$ is a clause with precisely $l$ positive literals that contains a variable from $Z$.
Then 
	$\max\cbc{X_Z:|Z|\leq\delta n}\leq\sqrt{\delta}n$ \whp
\end{lemma}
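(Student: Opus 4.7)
The plan is to combine a monotonicity reduction, a factorial-moment tail bound for each fixed $Z$, and a union bound over the choices of $Z$ of a single size. To begin, note that $X_Z$ is monotone increasing in $Z$ (enlarging $Z$ can only create more qualifying clauses), so writing $s:=\lfloor\delta n\rfloor$, the maximum of $X_Z$ over $|Z|\le\delta n$ is attained at $|Z|=s$, and it suffices to show that \whp\ $X_Z\le L:=\lceil\sqrt{\delta}\,n\rceil$ for every $Z\subseteq V$ of size exactly $s$.

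For a fixed such $Z$, I would exploit that the $m$ clauses are stochastically independent, and that within each clause the $k$ literals are independently uniform on the $2n$ literals, so polarities are i.i.d.\ Bernoulli$(1/2)$ and the underlying variables are i.i.d.\ uniform on $V$. The event that $\PHI_i$ has exactly $l$ positive literals then has probability $\bink{k}{l}2^{-k}$, and conditional on this event a union bound over the $k$ positions bounds the probability that $\PHI_i$ touches $Z$ by $ks/n$. Hence each clause contributes to $X_Z$ independently with probability at most $p:=\bink{k}{l}2^{-k}\cdot ks/n$, and Markov's inequality applied to $\bink{X_Z}{L}$ yields
\[
\pr\brk{X_Z\ge L}\;\le\;\bink{m}{L}p^L\;\le\;\bc{\frac{\eul mp}{L}}^L.
\]

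The next step is to plug in $m/n\le 2^kk^{-1}\ln k$, $\bink{k}{l}\le k^l$, and $\sqrt{\delta}=\sqrt{\alpha}\,k^{-2l}$ to obtain
\[
\frac{\eul mp}{L}\;\le\;\eul\bink{k}{l}\sqrt{\delta}\,\ln k\;\le\;\eul\sqrt{\alpha}\,k^{-l}\ln k,
\]
which for $l\ge 1$ is bounded by, say, $\eul^{-2}$ once $\alpha>0$ is a sufficiently small absolute constant and $k$ is large. Thus $\pr[X_Z\ge L]\le\exp(-2L)$ for each fixed $Z$ of size $s$. A union bound over the $\bink{n}{s}\le(\eul/\delta)^{\delta n}$ choices of $Z$ then gives
\[
\pr\brk{\max_{|Z|\le\delta n}X_Z\ge L}\;\le\;\exp\bc{\delta n\ln(\eul/\delta)-2L},
\]
and since $\sqrt{\delta}\,\ln(\eul/\delta)=\sqrt{\alpha}\,k^{-2l}\bc{1+\ln(1/\alpha)+4l\ln k}\to 0$ uniformly for $1\le l\le\sqrt{k}$ as $k\to\infty$, the exponent is $-\Omega(\sqrt{\delta}\,n)$ and the bound is $o(1)$.

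The main obstacle I anticipate is balancing the two exponents uniformly across $1\le l\le\sqrt{k}$: the per-$Z$ tail demands $\sqrt{\delta}\lesssim k^{-l}/\ln k$ in order to force $\eul mp\ll L$, while the union-bound entropy demands $\delta\ln(1/\delta)\ll\sqrt{\delta}$. The scaling $\delta=\alpha k^{-4l}$ is precisely the one that comfortably satisfies both, and the key sanity check is that the decisive quantity $k^{-2l}\cdot l\ln k$ still tends to zero even at the upper end $l=\sqrt{k}$ (indeed, on $[1,\sqrt{k}]$ it is maximized at $l=1$, where it equals $k^{-2}\ln k\to 0$). The remaining calculations are routine bookkeeping.
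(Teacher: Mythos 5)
Your proposal is correct and follows essentially the same route as the paper: a reduction to $|Z|=\lfloor\delta n\rfloor$ by monotonicity, a first-moment/union bound over subsets of $[m]$ of size $\approx\sqrt{\delta}n$ with the per-clause probability $\binom{k}{l}2^{-k}\cdot k|Z|/n$, and a union bound over the $\binom{n}{\lfloor\delta n\rfloor}$ choices of $Z$, concluded by the same entropy-versus-tail comparison that the scaling $\delta=\alpha k^{-4l}$ makes work. The only cosmetic difference is that you present the clause-index union bound as Markov applied to $\binom{X_Z}{L}$ rather than folding it directly into one big union bound, and your phrase ``once $k$ is large'' is unnecessary since your own displayed bounds are uniform over $k\geq3$ and $1\leq l\leq\sqrt{k}$ once $\alpha$ is a small absolute constant.
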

\begin{proof}
Let $\mu=\lceil\sqrt{\delta }n\rceil$.
We use a first moment argument. Clearly we just need to consider sets $Z$ of size $\lfloor\delta n\rfloor$.
Thus, there are at most $\bink{n}{\delta n}$ ways to choose $Z$.
Once $Z$ is fixed, there are at most $\bink{m}\mu$ ways to choose a set $\cI\subset\brk{m}$ of size $\mu$.
For each $i\in\cI$ the probability that $\PHI_i$ contains a variable from $Z$ and has precisely $l$ positive literals
is at most $2^{1-k}k\bink{k}l\delta$
Hence, by the union bound
	\begin{eqnarray*}
	\pr\brk{\max\cbc{X_Z:|Z|\leq\delta n}\geq\mu}&\leq&
		\bink{n}{\delta n}\bink{m}{\mu}\brk{2^{1-k}k\bink{k}l\delta}^\mu
		\leq\bcfr{\eul}{\delta}^{\delta n}\bcfr{2\eul km\bink{k}l\delta}{2^k\mu}^\mu\\
		&\leq&\bcfr{\eul}{\delta}^{\delta n}\bcfr{2\eul\ln(k)\bink{k}l\delta n}{\mu}^\mu
						\qquad\qquad\qquad\qquad\mbox{[as $m\leq2^kk^{-1}\ln k$]}\\\\
		&\leq&\bcfr{\eul}{\delta}^{\delta n}\bc{4\eul\ln(k)\cdot k^l\cdot\sqrt{\delta}}^\mu
						\quad\qquad\qquad\qquad\mbox{[because $\mu=\lceil\sqrt{\delta }n\rceil$]}\\
		&\leq&\bcfr{\eul}{\delta}^{\delta n}\delta^{\mu/8}
					\qquad\qquad\qquad\qquad\qquad\mbox{[as $\delta=\alpha k^{-4l}$
								for a small $\alpha>0$]}\\
		&=&\exp\brk{n\sqrt{\delta}\bc{\sqrt{\delta}(1-\ln\delta)+\frac18\ln\delta}}.
	\end{eqnarray*}
The last expression is $o(1)$, because
$\sqrt{\delta}(1-\ln\delta)+\frac18\ln\delta$ is negative for sufficiently small $\delta$.
\qed\end{proof}

\section{The algorithm \Fix}\label{Sec_Fix}

In this section we present the algorithm \Fix.
To establish \Thm~\ref{Thm_Fix} we will prove the following:
for any $0<\eps<0.1$ there is $k_0=k_0(\eps)>3$ such that for all $k\geq k_0$
the algorithm \Fix\ outputs a satisfying assignment \whp\ when applied to $\PHI$ with $m=\lfloor(1-\eps)2^kk^{-1}\ln k\rfloor$.
Thus, we assume that $k$ exceeds some large enough number $k_0$ depending on $\eps$ only.
In addition, we assume throughout that $n>n_0$ for some large enough $n_0=n_0(\eps,k)$.
We set
	$$\omega=(1-\eps)\ln k\mbox{ and }k_1=\lceil k/2\rceil.$$

Let $\Phi\in\Omega_k(n,m)$ be a $k$-SAT instance.
When applied to $\Phi$ the algorithm basically tries to ``fix'' the all-true assignment
by setting ``a few'' variables $Z\subset V$ to false so as to satisfy all clauses.
Obviously, the set $Z$ will have to contain one variable from each clause consisting of negative literals only.
The key issue is to pick ``the right'' variables.
To this end, the algorithm goes over the all-negative clauses in the natural order.
If the present all-negative clause $\Phi_i$ does not contain a variable from $Z$ yet,
\Fix\ (tries to) identify a ``safe'' variable in $\Phi_i$, which it then adds to $Z$.
Here ``safe'' means that setting the variable to false does not create new unsatisfied clauses.
More precisely, we say that a clause $\Phi_i$ is \emph{$Z$-unique} if $\Phi_i$ contains exactly one
positive literal from $V\setminus Z$ and no negative literal whose underlying variable is in $Z$.
Moreover, $x\in V\setminus Z$ is \emph{$Z$-unsafe} if it occurs positively in
a $Z$-unique clause, and \emph{$Z$-safe} if this is not the case.
Then in order to fix an all-negative clause $\Phi_i$ we prefer $Z$-safe variables.

To implement this idea, \Fix\ proceeds in three phases.
Phase~1 performs the operation described in the previous paragraph:
try to identify a $Z$-safe variable in each all-negative clause.
Of course, it will happen that an all-negative clause does not contain a $Z$-safe variable.
In this case \Fix\ just picks the variable in position $k_1$.
Consequently, the assignment constructed in the first phase will not satisfy \emph{all} clauses.
However, we will prove that the number of unsatisfied clauses is very small, and the purpose of
Phases~2 and~3 is to deal with them.
Before we come to this, let us describe Phase~1 precisely.

\noindent{
\begin{Algo}\label{Alg_kSAT}\upshape\texttt{Fix$(\Phi)$}\\\sloppy
\emph{Input:} A $k$-SAT formula $\Phi$.\
\emph{Output:} Either a satisfying assignment or ``fail''.
\begin{tabbing}
mmm\=mm\=mm\=mm\=mm\=mm\=mm\=mm\=mm\=\kill
{\algstyle 1a.}	\> \parbox[t]{40em}{\algstyle
	Let $Z=\emptyset$.}\\
{\algstyle 1b.}	\> \parbox[t]{40em}{\algstyle
	For $i=1,\ldots,m$ do}\\
{\algstyle 1c.}	\> \> \parbox[t]{38em}{\algstyle
		If $\Phi_i$ is all-negative and contains no variable from $Z$}\\
{\algstyle 1d.}	\> \> \> \parbox[t]{36em}{\algstyle
			If there is $1\leq j<k_1$ such that $|\Phi_{ij}|$
				is $Z$-safe,
			then pick the least such $j$ and add $|\Phi_{ij}|$ to $Z$.}\\
{\algstyle 1e.}	\> \> \> \parbox[t]{36em}{\algstyle
			Otherwise add $|\Phi_{i\,k_1}|$ to $Z$.}
\end{tabbing}
\end{Algo}}

\noindent
The following proposition, which we will prove in \Sec~\ref{Sec_process}, summarizes the analysis of Phase~1.
Let $\sigma_Z$ be the assignment that sets all variables in $V\setminus Z$ to true
and all variables in $Z$ to false.
\begin{proposition}\label{Prop_process}
%Let $\Phi=F_k(n,m)$.
At the end of the first phase of $\Fix(\PHI)$ the following statements are true \whp
\begin{enumerate}
\item We have $|Z|\leq 4nk^{-1}\ln\omega$.
\item At most $(1+\eps/3)\omega n$ clauses are $Z$-unique.
\item At most $\exp(-k^{\eps/8})n$ clauses are unsatisfied under $\sigma_Z$.
\end{enumerate}
\end{proposition}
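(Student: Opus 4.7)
My plan is to handle the three statements in order, viewing Phase~1 as a sequential process. For part~1, I think of Phase~1 as a greedy construction of a hitting set for the all-negative clauses of $\PHI$. By Chernoff the number of all-negative clauses is $(1+o(1))\omega n/k$ w.h.p. Let $Y_t$ denote the number of all-negative clauses containing no variable from $Z$ after $t$ variables have been added. By the method of deferred decisions the newly added variable at each step is, conditional on the history, essentially uniform over $V$, giving
$
\Erw[Y_{t+1}-Y_t\,|\,\cF_t]\;\approx\;-1-kY_t/n;
$
the associated ODE $y'=-1-ky/n$ hits $0$ at $t^{*}\sim n\ln\omega/k$. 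Combined with Azuma's inequality (\Lem~\ref{Azuma}), whose Lipschitz constant $O(\ln^2 n)$ is controlled by the maximum-degree bound of \Lem~\ref{Lemma_double}, this yields $|Z|\leq 4n\ln\omega/k$ w.h.p.

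For part~2, fix any $S\subset V$ with $|S|\leq 4n\ln\omega/k$. The number of $S$-unique clauses of $\PHI$ is a sum of $m$ independent Bernoulli variables of success probability $k(1-|S|/n)2^{-k}$, hence with mean at most $km2^{-k}=\omega n$. A Chernoff bound gives
$
\pr[\#S\text{-unique}>(1+\eps/3)\omega n]\leq\exp(-\Omega(\omega n)),
$
and a union bound over the $\binom{n}{\lfloor 4n\ln\omega/k\rfloor}=\exp(O(n\ln(k)\ln\ln(k)/k))$ candidate sets $S$ is absorbed by this exponent $\Omega(n\ln k)$ since $\ln\ln(k)/k\to 0$. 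Applying the bound with $S=Z$ (which lies in this range by part~1) proves part~2.

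For part~3, the structural heart is the claim that every clause $C$ unsatisfied by $\sigma_Z$ must contain a variable added to $Z$ via line~1e. Indeed, if $y$ is the last positive variable of $C$ that Phase~1 inserts into $Z$, then just before $y$ is added the other positive variables of $C$ already lie in $Z$ while its negative variables lie in $V\setminus Z$ (since $V\setminus Z$ only shrinks). Thus $C$ is $Z$-unique at that moment with $y$ as its unique positive-in-$V\setminus Z$ literal, so $y$ is $Z$-unsafe and cannot have been picked by line~1d. Consequently the number of unsatisfied clauses is at most $\sum_i|B_i|$, the sum taken over line~1e firings~$i$, where $B_i$ is the set of clauses broken by the $i$th firing.

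Two estimates then control $\sum_i|B_i|$. First, a balls-into-bins argument (\Lem~\ref{Lemma_BallsIntoBins}) applied to the positive-in-$V\setminus Z$ literals of the at most $(1+\eps/3)\omega n$ $Z$-unique clauses bounds the $Z$-unsafe fraction of $V$ by $1-k^{-(1-\eps/2)}$. Since $|\Phi_{i,1}|,\ldots,|\Phi_{i,k_1-1}|$ are essentially independent uniforms over $V$ by deferred decisions, the conditional probability that all are $Z$-unsafe is at most $(1-k^{-(1-\eps/2)})^{k/2}\leq\exp(-\tfrac12 k^{\eps/2})$; summing over the $\sim\omega n/k$ all-negative clauses bounds the expected number of line~1e firings by $\exp(-\Omega(k^{\eps/2}))\,n$. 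Second, $\Erw[|B_i|]=O(\omega)$ by a first-moment count, since each clause in $B_i$ must contain a fixed variable positively and its remaining $k-1$ positions must lie in a specific half-measure event. The main obstacle I anticipate is upgrading these in-expectation bounds to high-probability statements, since line~1e firings are intricately correlated through $Z$; a martingale argument tied to the filtration of Phase~1 (combined with \Lem~\ref{Lemma_moment}) should bridge this gap and deliver the target bound $\sum_i|B_i|\leq\exp(-k^{\eps/8})n$ w.h.p.\ for large enough $k$.
\qed
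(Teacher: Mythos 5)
Your three-part decomposition and the structural skeleton of the argument are correct, and part of your proof is genuinely different from the paper's. Here is the comparison.

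\textbf{Part 1.} You track the decay of the number of all-negative clauses not yet hit by $Z$ via a differential equation and Azuma. The paper (Lemma~\ref{Lemma_survive} and Corollary~\ref{Cor_T}) instead bounds this count at each time $t$ by $2n\omega e^{-kt/n}/k$ using the moment-method Lemma~\ref{Lemma_moment}, then notes the count must drop by at least one per step. Both routes are fine; the paper's avoids discussing the Lipschitz constant of a martingale, but yours is also workable since \Lem~\ref{Lemma_double} bounds degrees by $\ln^2n$.

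\textbf{Part 2.} This is a genuinely different and cleaner argument. You do an unconditional Chernoff bound for a \emph{fixed} small set $S$ and union-bound over all $\binom{n}{4n\ln\omega/k}=\exp(O(n\ln k\cdot\ln\ln k/k))$ candidate sets, then specialize to $S=Z$. This works because the Chernoff exponent $\Omega(\eps^2 n\ln k)$ dominates once $\ln\ln k/k<c\eps^2$, i.e.\ for $k\geq k_0(\eps)$. The paper instead proves Lemma~\ref{Lemma_danger} by a fine-grained count of clauses with $l$ positive literals of which $l-1$ lie in $Z_t$, via Lemmas~\ref{Lemma_danger_prod}--\ref{Lemma_danger_aux}. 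Your union bound is simpler, and in principle extends to bound $|U_t|$ for all $t$ simultaneously, which is what the paper's version supplies for later use.

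\textbf{Part 3.} Your structural claim --- every $\sigma_Z$-unsatisfied clause must have had its last positive variable placed in $Z$ by line~1e --- is essentially the paper's case~a., but you omit the degenerate case of clauses with repeated literals (the paper's case~b., disposed of by \Lem~\ref{Lemma_double}). More importantly, there is a real gap in your quantitative chain: you assert that a balls-into-bins argument ``applied to the positive-in-$V\setminus Z$ literals of the at most $(1+\eps/3)\omega n$ $Z$-unique clauses'' bounds the $Z$-unsafe fraction by $1-k^{\eps/2-1}$, justified only by ``deferred decisions.'' But $Z$ (and hence the collection of $Z$-unique clauses) is built adaptively, so these unique-positive occurrences are \emph{not} a priori independent uniform balls given the filtration $\cF_t$. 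The paper devotes all of \Sec~\ref{Sec_nonSupporting}, and in particular the nontrivial Lemma~\ref{Lemma_BallsUniform} (showing $\pr[\psi_t=\hat\psi_t\mid\cU_t=\hat\cU_t]\le(n-t)^{-|\hat\cU_t|}$), to establishing exactly this near-uniformity before \Lem~\ref{Lemma_BallsIntoBins} can be invoked. Without this, your step from ``$|U_t|\le(1+\eps/3)\omega n$'' to ``the unsafe fraction is $\le 1-k^{\eps/2-1}$'' is unsupported. One possible repair that would be consistent with the rest of your write-up is to extend the Part~2 union-bound trick: for each fixed $S$ the unique-positive literals of $S$-unique clauses \emph{are} genuinely i.i.d.\ uniform over $V\setminus S$, so you can bound the number of $S$-safe variables directly via \Lem~\ref{Lemma_BallsIntoBins} and union-bound over $S$; but you would need to say this explicitly. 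Once the safe-fraction bound is in place, your firing-probability estimate, the bound $\Erw[|B_i|]=O(\omega)$, and the concluding Azuma step mirror the paper's \Cor~\ref{Cor_PlanB} and the martingale $(\cM_t)$ in the proof.
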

Since the probability that a random clause is all-negative is $2^{-k}$,
under the all-true assignment $(1+o(1))2^{-k}m\sim\omega n/k$ clauses are unsatisfied \whp\
Hence, the outcome $\sigma_Z$ of Phase~1 is already a lot better than the all-true assignment \whp\

Step~1d only considers indices $1\leq j\leq k_1$.
This is just for technical reasons, namely to maintain a certain degree of stochastic independence
to facilitate (the analysis of) Phase~2.

Phase~2 deals with the clauses that are unsatisfied under $\sigma_Z$.
The general plan is similar to Phase~1:
we (try to) identify a set $Z'$ of ``safe'' variables that can be used to satisfy the $\sigma_Z$-unsatisfied clauses
without ``endangering'' further clauses.
More precisely, we say that a clause $\Phi_i$ is \emph{$(Z,Z')$-endangered} if there is no 
$1\leq j\leq k$ such that the literal $\Phi_{ij}$ is true under $\sigma_Z$ and $|\Phi_{ij}|\in V\setminus Z'$.
In words, $\Phi_i$ is $(Z,Z')$-endangered if it relies on one of the variables in $Z'$ to be satisfied.
Call $\Phi_i$ \emph{$(Z,Z')$-secure} if it is not $(Z,Z')$-endangered.
Phase~2 will construct a set $Z'$ such that for all $1\leq i\leq m$ one of the following is true:
\begin{itemize}
\item $\Phi_i$ is $(Z,Z')$-secure.
\item There are at least three indices $1\leq j\leq k$ such that $|\Phi_{ij}|\in Z'$.
\end{itemize}
To achieve this, we say that a variable $x$ is \emph{$(Z,Z')$-unsafe}
if $x\in Z\cup Z'$ or there are indices $(i,l)\in\brk m\times\brk k$ such that the following two conditions hold:
\begin{enumerate}
\item[a.] For all $j\not=l$
		we have $\Phi_{ij}\in Z\cup Z'\cup\overline{V\setminus Z}$.
\item[b.] $\Phi_{il}=x$. % is true under $\sigma_Z$ and $|\Phi_{il}|=x$.
\end{enumerate}
(In words, $x$ occurs positively in $\Phi_i$, and
all other literals of $\Phi_i$ are either positive but in $Z\cup Z'$
or negative but not in $Z$.)
Otherwise we call $x$ \emph{$(Z,Z')$-safe}.
In the course of the process, \Fix\ greedily tries to %generate as few $(Z,Z')$-endangered clauses
%as possible by
add as few $(Z,Z')$-unsafe variables to $Z'$ as possible.

\begin{tabbing}
mmm\=mm\=mm\=mm\=mm\=mm\=mm\=mm\=mm\=\kill
{\algstyle 2a.}	\> \parbox[t]{40em}{\algstyle
	Let $Q$ consist of all $i\in\brk{m}$ such that $\Phi_i$ is unsatisfied under $\sigma_Z$.
	Let $Z'=\emptyset$.}\\
{\algstyle 2b.}	\> \parbox[t]{40em}{\algstyle
	While $Q\not=\emptyset$}\\
{\algstyle 2c.}	\> \> \parbox[t]{38em}{\algstyle
	Let $i=\min Q$.}\\
{\algstyle 2d.}	\> \> \parbox[t]{38em}{\algstyle
	If there are indices $k_1<j_1<j_2<j_3\leq k-5$ such that
		$|\Phi_{ij_l}|$ is $(Z,Z')$-safe for $l=1,2,3$,}\\
	\> \> \> \parbox[t]{36em}{\algstyle
			pick the lexicographically first such sequence and
			add $|\Phi_{ij_1}|,|\Phi_{ij_2}|,|\Phi_{ij_3}|$ to $Z'$.}\\
{\algstyle 2e.}	\> \> \parbox[t]{38em}{\algstyle else}\\
	\> \> \> \parbox[t]{36em}{\algstyle
			 let $k-5<j_1<j_2<j_3\leq k$ be the lexicographically first sequence
					such that $|\Phi_{ij_l}|\not\in Z'$ and add
				$|\Phi_{ij_l}|$ to $Z'$ $(l=1,2,3)$.}\\
{\algstyle 2f.}	\>  \> \parbox[t]{38em}{\algstyle
		Let $Q$ be the set of all $(Z,Z')$-endangered clauses that contain
			less than 3 variables from $Z'$.}
\end{tabbing}

\noindent
Note that the While-loop gets executed at most $n/3$ times, because $Z'$ gains three new elements in each iteration.
Actually we prove in \Sec~\ref{Sec_phase2} below that the final set $Z'$ is fairly small \whp

\begin{proposition}\label{Prop_phase2}
The set $Z'$ obtained in Phase~2 of $\Fix(\PHI)$ has size
$|Z'|\leq n k^{-12}$ \whp
\end{proposition}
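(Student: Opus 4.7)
My plan is to bound the total number $T$ of Phase~2 iterations, so that $|Z'| = 3T$. Write $T = T_d + T_e$ for the iterations executing lines~2d and~2e respectively, and let $Q_0$ denote the initial queue; by \Prop~\ref{Prop_process}, $|Q_0| \leq \exp(-k^{\eps/8}) n$.

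The key structural observation is that step~2d iterations never add a new clause to $Q$. Indeed, if $x$ is $(Z,Z')$-safe then $x \notin Z \cup Z'$, and for every clause $\Phi_j$ in which $x$ occurs positively there is another literal $\Phi_{jl'} \notin Z \cup Z' \cup \overline{V\setminus Z}$, i.e., a literal true under $\sigma_Z$ with $|\Phi_{jl'}| \in V \setminus Z'$. Provided $|\Phi_{jl'}| \neq x$---which is automatic unless $x$ appears twice in $\Phi_j$, an event affecting only $O(\ln n)$ clauses in total by \Lem~\ref{Lemma_double}---this witness survives after $x$ joins $Z'$, and $\Phi_j$ remains $(Z, Z' \cup \{x\})$-secure. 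Clauses in which $x$ does not occur positively are trivially unaffected, since $x$ is not a true-under-$\sigma_Z$ literal in them. Combined with \Lem~\ref{Lemma_double} (every variable lies in at most $\ln^2 n$ clauses \whp), each step~2e iteration adds at most $O(\ln^2 n)$ new clauses to $Q$; since every iteration also removes $\Phi_i$ from $Q$, we obtain
\[
T \leq |Q_0| + O(T_e \ln^2 n).
\]

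It remains to bound $T_e$. Step~2e fires exactly when fewer than three of the positions $k_1 < j \leq k-5$ of the processed clause $\Phi_i$ contain a $(Z,Z')$-safe variable. Because Phase~1 only ever inspects positions $j \leq k_1$ of all-negative clauses, the tail literals $j > k_1$ remain essentially uniformly distributed conditional on the filtration accumulated so far, modulo the bounded amount of extra exposure by earlier Phase~2 iterations. Assuming inductively that $|Z'| \leq n k^{-12}$ has held throughout, the set $Z \cup Z' \cup \overline{V\setminus Z}$ has density $\tfrac12 + O(k^{-12})$ among all $2n$ literals. Hence, for a uniform random tail variable $x$, the expected number of clause-witnesses certifying $x$ as unsafe is at most $(km/(2n)) \cdot (\tfrac12 + o(k^{-11}))^{k-1} \sim (1-\eps)\ln k$, and a Poissonisation argument yields a safety probability of at least $\Omega(k^{\eps - 1})$. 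With $\sim k/2$ tail positions this means $\Omega(k^\eps)$ safe tail variables in expectation, so a Chernoff bound produces per-iteration step~2e probability at most $\exp(-\Omega(k^\eps))$.

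Finally, $T \leq n/3$ deterministically (each iteration adds three fresh variables to $Z'$), so Azuma's inequality (\Lem~\ref{Azuma}) applied to the sum of step~2e indicators across iterations gives $T_e \leq n \exp(-\Omega(k^\eps))$ \whp\ Plugging back, $|Z'| \leq 3|Q_0| + O(n \exp(-\Omega(k^\eps)) \ln^2 n) \leq n k^{-12}$ for $k$ large enough, closing the induction. The main obstacle will be making the ``essential independence'' of tail literals precise: Phase~2 iteratively reveals more literals in a data-dependent way through its safeness checks, so implementing the inductive use of $|Z'| \leq n k^{-12}$ requires a suitable filtration and a stopping-time argument that certifies the bound has held up to the moment each per-iteration probability estimate is invoked.
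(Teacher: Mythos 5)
Your high-level strategy matches the paper's (show the process runs for at most $O(nk^{-12})$ rounds), and your split into 2d-vs-2e iterations captures the right intuition: 2d is ``harmless'' and 2e is rare. But the way you close the accounting has a quantitative flaw that cannot be patched as written, and two further gaps that the paper handles with machinery you'd still need to supply.

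The main problem is the bound $T \leq |Q_0| + O(T_e\ln^2 n)$. Even granting $T_e \leq n\exp(-\Omega(k^\eps))$, the right-hand side contains a factor $\ln^2 n$, which for any fixed $k$ grows without bound as $n\to\infty$; so the claimed conclusion $|Z'| \leq 3T \leq nk^{-12}$ fails for $n$ sufficiently large relative to $k$. The crude ``each variable lies in at most $\ln^2 n$ clauses'' bound is simply too lossy here. The paper sidesteps this entirely: rather than charging newly endangered clauses to individual variables and multiplying by the worst-case degree, it proves directly (Corollary~\ref{Cor_unsat}) that \emph{the total number of $(Z_T,Z'_{\theta'})$-endangered clauses is at most $\theta'$ whp}, by decomposing these clauses into five types and bounding each. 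The type corresponding to your ``2e creates new endangered clauses'' is handled by \Lem~\ref{Lemma_firstMoment}, a first-moment bound over all small sets $Z$ showing at most $\sqrt{\delta}n$ clauses with few positive literals touch any set of size $\delta n$. That bound has no $\ln n$ factor, which is precisely what makes the accounting close. Once the total count of endangered clauses is bounded by $\theta'$, the observation that each iteration removes one such clause from the residual queue ($X_t\leq X_0-t$) immediately gives $T'\leq\theta'$.

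Two smaller gaps. First, your claim that step~2d never adds to $Q$ relies on the implication ``$\Phi_{jl'}\notin Z\cup Z'\cup\overline{V\setminus Z}$ implies $\Phi_{jl'}$ is true under $\sigma_Z$ with $|\Phi_{jl'}|\in V\setminus Z'$''. This fails for a negative literal $\bar y$ with $y\in Z\cap Z'$: such a literal is outside $Z\cup Z'\cup\overline{V\setminus Z}$ but is \emph{not} a secure witness. Variables in $Z\cap Z'$ do arise from step~2e, so once 2e has fired even once this edge case is live. The paper covers it by explicitly including $Z_T\cap Z'_t$ in the ``bad set'' $\cZ$ in the proof of Corollary~\ref{Cor_unsat}. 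Second, you correctly flag that your per-iteration $\exp(-\Omega(k^\eps))$ estimate for 2e is heuristic. The paper makes this rigorous via the auxiliary process {\bf PI1'}--{\bf PI4'} with its filtration $(\cF'_t)$, Proposition~\ref{Prop_card2} (conditional uniformity of unexposed literals), Lemma~\ref{Lemma_danger2} (bounding the new $Z'$-unique clauses), and Corollary~\ref{Cor_PlanB2} (the per-step probability bound together with a moment argument to avoid conditioning on the future). None of that is optional; without it the ``essential independence'' of tail literals, and the inductive use of the $|Z'|\leq nk^{-12}$ bound you acknowledge, are exactly the unfinished part of your proof.
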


After completing Phase~2, \Fix\ is going to set the variables in $V\setminus(Z\cup Z')$ to true
and the variables in $Z\setminus Z'$ to false.
This will satisfy all $(Z,Z')$-secure clauses.
In order to satisfy the $(Z,Z')$-endangered clauses as well, \Fix\ needs to set
the variables in $Z'$ appropriately.
Since each $(Z,Z')$-endangered clauses
contains three variables from $Z'$, this is essentially equivalent to solving a $3$-SAT problem, in which $Z'$ is the set of variables.
As we shall see, \whp\ the resulting $3$-SAT instance is sufficiently sparse for the following
``matching heuristic'' to succeed:
set up a bipartite graph $G(\Phi,Z,Z')$ whose vertex set consists of
the $(Z,Z')$-endangered clauses and the set $Z'$.
Each $(Z,Z')$-endangered clause is adjacent to the variables from $Z'$ that occur in it.
If there is a matching $M$ in $G(\Phi,Z,Z')$ that covers all $(Z,Z')$-endangered clauses,
we construct an assignment $\sigma_{Z,Z',M}$ as follows: for each variable $x\in V$ we define
	$$\sigma_{Z,Z',M}(x)=\left\{\begin{array}{cl}
		\false&\mbox{ if $x\in Z\setminus Z'$}\\
		\false&\mbox{ if $\{\Phi_i,x\}\in M$ for some $1\leq i\leq m$ and $x$
				occurs negatively in $\Phi_i$},\\
		\true&\mbox{ otherwise.}\end{array}
		\right.$$
To be precise, Phase~3 proceeds as follows.

\begin{tabbing}
mmm\=mm\=mm\=mm\=mm\=mm\=mm\=mm\=mm\=\kill
{\algstyle 3.}	 \> \parbox[t]{40em}{\algstyle
		If $G(\Phi,Z,Z')$ has a matching that covers all $(Z,Z')$-endangered clauses,
		then compute an (arbitrary) such matching $M$ and output $\sigma_{Z,Z',M}$.
		If not, output ``fail''.}
\end{tabbing}
The (bipartite) matching computation can be performed
in $O((n+m)^{3/2})$ time via the Hopcroft-Karp algorithm.
In \Sec~\ref{Sec_matching} we will show that the matching exists \whp

\begin{proposition}\label{Prop_matching}
\Whp\ $G(\PHI,Z,Z')$ has a  matching that covers all $(Z,Z')$-endangered clauses.
\end{proposition}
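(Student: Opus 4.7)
\emph{Proof of Proposition~\ref{Prop_matching} (plan).}
The plan is to verify Hall's marriage condition for the bipartite graph $G(\PHI,Z,Z')$. Suppose toward contradiction that no matching covering all $(Z,Z')$-endangered clauses exists. Choose a minimum set $S$ of endangered clauses with $|N_G(S)|<|S|$: by minimality, $|N_G(S)|=|S|-1=:s-1$ and every variable in $Y:=N_G(S)\subseteq Z'$ occurs in at least two clauses of $S$. The termination rule of Phase~2 ensures that each clause in $S$ has at least three variables in $Y$; in particular $s\geq 2$. We condition on the events $|Z|\leq 4nk^{-1}\ln\omega$ and $|Z'|\leq nk^{-12}$ guaranteed by Propositions~\ref{Prop_process} and~\ref{Prop_phase2}, whence $|Y|\leq nk^{-12}$.

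The heart of the proof is a first moment argument. Fix a pair $(S,Y)$ with $|S|=s$ and $|Y|=s-1$. Unpacking the definitions, a clause $\Phi_i$ with $i\in S$ being endangered and having all its $Z'$-variables in $Y$ is equivalent to each of its $k$ literals lying in one of the following ``allowed'' categories: (a) a positive literal with variable in $Z$; (b) a negative literal with variable in $V\setminus(Z\cup Z')$; or (c) a literal of either polarity with variable in $Y$. For a uniform random clause the per-position probability of being allowed equals $\tfrac{1}{2}+|Y|/n-|Z'|/(2n)\leq\tfrac12+k^{-12}$, which contributes a factor $(1+o(1))2^{-k}$ across all $k$ positions. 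Multiplying by the combinatorial factor $\binom{k}{3}(|Y|/n)^3$ that controls the three $Y$-positions, the probability that a single random clause has the required structure is at most $Ck^3s^3/(2^kn^3)$ for a constant $C>0$.

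By independence of the $m$ clauses in $\PHI$, together with the estimates $\binom{n}{s-1}\leq(2en/s)^s$, $\binom{m}{s}\leq(em/s)^s$, and $m\leq 2^kk^{-1}\ln(k)\cdot n$, the expected number of minimum violator pairs of size $s$ is bounded by
\begin{equation*}
\binom{n}{s-1}\binom{m}{s}\left(\frac{Ck^3s^3}{2^kn^3}\right)^s
\leq\left(\frac{2Ce^2k^3ms}{2^kn^2}\right)^s
=\left(\frac{2Ce^2k^2\ln(k)\cdot s}{n}\right)^s.
\end{equation*}
For $s\leq nk^{-12}+1$ the right-hand side is at most $(C'\ln(k)/k^{10})^s$, which for sufficiently large $k$ is summable over $s\geq 2$ to $o(1)$. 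By Markov's inequality no minimum Hall violator exists \whp, so $G(\PHI,Z,Z')$ has a matching covering all endangered clauses \whp

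The main technical obstacle is the dependence of $(Z,Z')$ on $\PHI$: both the ``endangered'' condition and the per-position allowed-category analysis refer to $(Z,Z')$, which are themselves random functions of the very formula over which the expectation is taken. I would address this by conditioning on the high-probability size estimates from Propositions~\ref{Prop_process}--\ref{Prop_phase2} and then, for each admissible realization of $(Z,Z')$, exploiting the conditional independence of the clauses in $\PHI$ to factor the per-clause probabilities. The $2^{-k}$ factor inherited from the endangered constraint is strong enough to absorb the combinatorial overhead of a union bound over such realizations, but verifying that independence survives the conditioning---and pinning down exactly which events to condition on---is the most delicate step of the argument.
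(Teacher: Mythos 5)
Your overall plan---Hall's theorem plus a first-moment union bound over small violator pairs $(S,Y)$ with the combinatorial factor $\binom{k}{3}(|Y|/n)^3$ coming from the ``at least three variables in $Y$'' constraint---is the same skeleton the paper uses, and your arithmetic with $\binom{n}{s-1}\binom{m}{s}$ is roughly the computation carried out in Corollaries~\ref{Cor_Ymulti} and~\ref{Cor_Ymulti2}. But you correctly identify the crux, and your proposed resolution of it does not work, so there is a genuine gap.

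The gap is in the final paragraph. Conditioning on a fixed realization $(Z,Z')=(Z_0,Z_0')$ destroys the independence of the $m$ clauses: the event $\{Z_T=Z_0,\,Z_{T'}'=Z_0'\}$ is a complicated, non-product event depending on the entire formula, and the conditional law of $\PHI$ given it is not one in which clauses are i.i.d.\ uniform. So the claim that you can ``exploit the conditional independence of the clauses to factor the per-clause probabilities'' after this conditioning is false. Moreover, even if per-clause factorization survived, a union bound ``over such realizations'' of $(Z,Z')$ has exponentially many terms (there are $\binom{n}{|Z|}\binom{n}{|Z'|}$ candidate pairs, roughly $\exp(\Theta(n\ln k/k))$), and the $2^{-k}$ per-clause gain is nowhere near enough to absorb that.

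The paper's resolution is qualitatively different on exactly this point, and it has two ingredients that your proposal lacks. First, it \emph{drops} the constraint $Y\subset Z'$: the union bound in Corollary~\ref{Cor_Ymulti2} ranges over \emph{all} sets $Y\subset V$ of size $\leq nk^{-12}$ (only $\leq\binom{n}{nk^{-12}}$ of them, which the $2^{-k\mu}$ factor does beat), and one never sums over realizations of $Z'$ at all. Second, the event ``the literal in position $j$ of clause $i$ lands in $Z_t$'' is not handled by conditioning on $Z_t$; it is handled by the exposure-filtration machinery. Specifically, the paper refines the filtration to $(\cF_t^Y)_{t\geq0}$ (Proposition~\ref{Prop_card_Y}), which additionally reveals for every position whether its variable lies in $Y$, proves that unrevealed literals remain uniform over $V\setminus(Z_t\cup Y)$, and then bounds, via Lemma~\ref{Lemma_match_prod} and the telescoping Lemma~\ref{Lemma_filt}, the probability that designated positions hit $z_s$ at designated times $s$. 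This is what replaces your intended ``condition on $(Z,Z')$ and factor.'' Without some version of this exposure argument (or another genuinely new idea for decoupling $Z,Z'$ from $\PHI$), the first-moment computation in your second and third paragraphs is a heuristic, not a proof.
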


\noindent
\emph{Proof of \Thm~\ref{Thm_Fix}.}
\Fix\ is clearly a deterministic polynomial time algorithm.
It remains to show that $\Fix(\PHI)$ outputs a satisfying assignment \whp\
By \Prop~\ref{Prop_matching} Phase~3 will find a matching $M$ that covers all $(Z,Z')$-endangered clauses \whp,
and thus the output will be the assignment $\sigma=\sigma_{Z,Z',M}$ \whp\
Assume that this is the case.
Then $\sigma$ sets all variables in $Z\setminus Z'$ to false and all variables in $V\setminus(Z\cup Z')$ to true,
thereby satisfying all $(Z,Z')$-secure clauses.
Furthermore, for each $(Z,Z')$-endangered clause $\PHI_i$ there is an edge $\{\PHI_i,|\PHI_{ij}|\}$ in $M$.
If $\PHI_{ij}$ is negative, then $\sigma(|\PHI_{ij}|)=\false$, and if
if $\PHI_{ij}$ is positive, then $\sigma(\PHI_{ij})=\true$.
In either case $\sigma$ satisfies $\PHI_i$.
\qed

\section{Proof of \Prop~\ref{Prop_process}}\label{Sec_process}

Throughout this section we let $0<\eps<0.1$ and assume that $k\geq k_0$ for a sufficiently large $k_0=k_0(\eps)$.
Moreover, we assume that $m=\lfloor(1-\eps)2^kk^{-1}\ln k\rfloor$ and that $n>n_0$ for some large enough $n_0=n_0(\eps,k)$.
Let
	$\omega=(1-\eps)\ln k\mbox{ and }k_1=\lceil k/2\rceil.$

\subsection{Outline}\label{Sec_process_outline}

Before we proceed to the analysis, it is worthwhile giving a brief intuitive explanation as to why Phase~1 ``works''.
Namely, let us just consider the \emph{first} all-negative clause $\PHI_i$ of the random input formula.
Without loss of generality we may assume that $i=1$.
Given that $\PHI_1$ is all-negative, the $k$-tuple of variables $(|\PHI_{1j}|)_{1\leq j\leq k}\in V^k$ is uniformly distributed.
Furthermore, at this point $Z=\emptyset$.
Hence, a variable $x$ is $Z$-unsafe iff it occurs as the unique positive literal in some clause.
The expected number of clauses with exactly one positive literal is $k2^{-k}m\sim\omega n$.
Thus, for each variable $x$ the expected number of clauses in which $x$ is the only positive literal is
	$k2^{-k}m/n\sim\omega.$
In fact, for each variable the number of such clauses is asymptotically Poisson.
Consequently, the probability that $x$ is not $Z$-supporting is $(1+o(1))\exp(-\omega)$.
Returning to the clause $\PHI_1$, we conclude that the \emph{expected} number of indices $1\leq j\leq k_1$
such that $|\PHI_{1j}|$ is $Z$-safe is $(1+o(1))k_1\exp(-\omega)$.
Since $\omega=(1-\eps)\ln k$, we have
	$$(1+o(1))k_1\exp(-\omega)\geq k^{\eps}/3.$$
Indeed, the number of  indices $1\leq j\leq k_1$ so that $|\PHI_{1j}|$ is $Z$-safe is binomially distributed,
and hence the probability that there is no $Z$-safe $|\PHI_{1j}|$ is at most $(1+o(1))\exp(-k^{\eps}/3)$.
Thus, it is ``quite likely'' that $\PHI_1$ can be satisfied by setting
some variable to false without creating any new unsatisfied clauses.
Of course, this argument only applies to the first all-negative clause (i.e., $Z=\emptyset$), and 
the challenge lies in dealing with the stochastic dependencies that arise in the course of the execution.

To this end, we need to investigate how the set $Z$ computed in Steps~1 evolves over time.
Thus, we will analyze the execution of Phase~1 as a stochastic process,
in which the set $Z$ corresponds to a sequence $(Z_t)_{t\geq0}$ of sets.
The time parameter $t$ is the number of all-negative clauses for which either Step~1d or 1e has been executed.
We will represent the execution of Phase~1 on input $\PHI$ by a sequence of (random) maps
	$$\pi_t:\brk{m}\times\brk{k}\rightarrow\{-1,1\}\cup V\cup\bar V.$$
The map $\pi_t$ is meant to capture
the information that has determined the first $t$ steps of the process.
If $\pi_t(i,j)=1$ (resp.\ $\pi_t(i,j)=-1$),
then \Fix\ has only taken into account that $\PHI_{ij}$ is a positive (negative) literal, but not what the underlying variable is.
If $\pi_t(i,j)\in V\cup\bar V$, then \Fix\ has revealed the actual literal $\PHI_{ij}$.

Let us define the sequence $\pi_t(i,j)$ precisely.
Let $Z_0=\emptyset$.
Moreover, let $U_0$ be the set of all $i$ such that there is exactly one $j$ such that $\PHI_{ij}$ is positive.
Further, define $\pi_0(i,j)$ for $(i,j)\in\brk m\times\brk k$ as follows.
If $i\in U_0$ and $\PHI_{ij}$ is positive, then let $\pi_0(i,j)=\PHI_{ij}$.
Otherwise, let $\pi_0(i,j)$ be $1$ if $\PHI_{ij}$ is a positive literal and $-1$ if $\PHI_{ij}$ is a negative literal.
In addition, for $x\in V$ let
	$$U_0(x)=\abs{\{i\in U_0:\exists j\in\brk{k}:\pi_0(i,j)=x\}}$$
be the number of clauses in which $x$ is the unique positive literal.
For $t\geq 1$ we define $\pi_t$ as follows.

\begin{tabbing}
mmm\=mm\=mm\=mm\=mm\=mm\=mm\=mm\=mm\=\kill
{\bf PI1}	\> \parbox[t]{40em}{If there is no index $i\in\brk{m}$ such that
	$\PHI_i$ is all-negative but contains no variable from $Z_{t-1}$, the process stops.
	Otherwise let $\phi_t$ be the smallest such index.}\\
{\bf PI2}	\> \parbox[t]{40em}{If there is $1\leq j<k_1$ such that $U_{t-1}(|\Phi_{\phi_t j}|)=0$,
			then choose the smallest such index; otherwise let $j=k_1$.
			 Let $z_t=\PHI_{\phi_t j_t}$ and $Z_t=Z_{t-1}\cup\{z_{t}\}$.}\\
{\bf PI3}	\> \parbox[t]{40em}{Let $U_t$ be the set of all $i\in\brk{m}$
			such that $\PHI_i$ is $Z_t$-unique.
				For $x\in V$ let $U_t(x)$ be the number of indices $i\in U_t$ such
			that $x$ occurs positively in $\PHI_i$.}\\
{\bf PI4}	\> \parbox[t]{40em}{For any $(i,j)\in\brk m\times\brk k$ let
					$$\pi_t(i,j)=\left\{\begin{array}{cl}
					\PHI_{ij}&\mbox{ if $(i=\phi_t\wedge j\leq k_1)\vee |\PHI_{ij}|\in Z_t\vee
								(i\in U_t\wedge \pi_0(i,j)=1)$,}\\
					\pi_{t-1}(i,j)&\mbox{ otherwise.}
					\end{array}
					\right.$$}
\end{tabbing}
Let $T$ be the total number of iterations of this process before it stops and
define $\pi_t=\pi_T$, $Z_t=Z_T$, $U_t=U_T$, $U_t(x)=U_T(x)$, $\phi_t=z_t=0$ for all $t>T$.

Let us discuss briefly how the above process mirrors Phase~1 of \Fix.
Step~{\bf PI1} selects the least index $\phi_t$ such that clause $\PHI_{\phi_t}$ is all-negative
but contains no variable from the $Z_{t-1}$ of variables that have been selected to be set to false so far.
In terms of the description of \Fix, this corresponds to jumping forward to the next execution of Steps~1d--e.
Since $U_{t-1}(x)$ is the number of $Z_{t-1}$-unique clauses in which variable $x$ occurs positively,
Step~{\bf PI2} applies the same rule as 1d--e of \Fix\ to select the new element $z_t$ to be included in the set $Z_t$.
Step~{\bf PI3} then ``updates'' the numbers $U_t(x)$.
Finally, step {\bf PI4} sets up the map $\pi_t$ to represent the information that has guided the process so far:
we reveal the first $k_1$ literals of
the current clause $\PHI_{\phi_t}$,
all occurrences of the variable $z_t$, and all positive literals of $Z_t$-unique clauses.

Observe that at each time $t\leq T$ the process {\bf PI1}--{\bf PI4} adds precisely one variable $z_t$
to $Z_t$. Thus, $|Z_t|=t$ for any $t\leq T$.
Furthermore, for $1\leq t\leq T$ the map $\pi_t$ is obtained from $\pi_{t-1}$
by replacing some $\pm1$s by literals, but no changes of the opposite type are made.
Finally, for any $i\in\brk m$ there is either no $j$ such that $\pi_t(i,j)=1$,
or there are at least two such indices $j$.
This is because step {\bf PI4} ensures that for any $i$ such that $\PHI_i$ is $Z_t$-unique
$\pi_t(i,j)$ equals the literal $\PHI_{ij}$ if it is positive.

Of course, the 
process {\bf PI1}--{\bf PI4} can be applied to any concrete $k$-SAT formula $\Phi$ (rather than the random $\PHI$).
It then yields a sequence $\pi_t\brk{\Phi}$ of maps, variables $z_t\brk{\Phi}$, etc.

For each integer $t\geq0$ we define
an equivalence relation $\equiv_t$ 
on the set $\Omega_k(n,m)$ of $k$-SAT formulas by letting
$\Phi\equiv_t\Psi$ iff $\pi_s\brk{\Phi}=\pi_s\brk{\Psi}$ for all $0\leq s\leq t$.
Let $\cF_t$ be the $\sigma$-algebra generated by the equivalence classes of $\equiv_t$.
The family $(\cF_t)_{t\geq0}$ is a filtration, and
the following is immediate from the construction.

\begin{fact}\label{Fact_messbar}
For any $t\geq0$
the random map $\pi_t$, the random variables $\phi_{t+1}$ and $z_t$,
the random sets $U_t$ and $Z_t$, and the random variables $U_t(x)$ for $x\in V$ are $\cF_t$-measurable.
\end{fact}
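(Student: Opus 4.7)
The plan is to unpack the definition of $\cF_t$: it is generated by the partition of $\Omega_k(n,m)$ into equivalence classes of $\equiv_t$, so a random variable $X$ on $\Omega_k(n,m)$ is $\cF_t$-measurable iff $X[\Phi]$ depends only on the tuple $(\pi_0[\Phi],\ldots,\pi_t[\Phi])$. The claim thus reduces to verifying that each of $\pi_t,\ Z_t,\ z_t,\ U_t,\ U_t(x)$ and $\phi_{t+1}$ is a deterministic function of that tuple. For $\pi_t$ itself this is tautological; the rest I would handle by induction on $t$, exploiting the fact that the procedure \textbf{PI1}--\textbf{PI4} is a deterministic function of its history.

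In the base case $t=0$, the map $\pi_0$ records the sign of every position $(i,j)$ (via the entries $\pm 1$) and also reveals the underlying variable $\PHI_{ij}$ at every position $(i,j)$ such that clause $i$ has a unique positive literal. Hence $U_0$ and all values $U_0(x)$ are reconstructible directly from $\pi_0$, and $\phi_1$ is then obtained by applying \textbf{PI1} with $Z_0=\emptyset$ (using the signs in $\pi_0$ to identify all-negative clauses).

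For the inductive step, suppose $Z_{t-1}, U_{t-1}, U_{t-1}(\cdot)$ and $\phi_t$ are functions of $(\pi_s)_{s\le t-1}$. The first disjunct in \textbf{PI4} guarantees $\pi_t(\phi_t,j)=\PHI_{\phi_t,j}$ for every $j\le k_1$, so the variables $|\PHI_{\phi_t,j}|$ required to evaluate the selection rule in \textbf{PI2} are visible in $\pi_t$; combined with the known values $U_{t-1}(\cdot)$, this pins down $j_t$, and hence $z_t=\pi_t(\phi_t,j_t)$ and $Z_t=Z_{t-1}\cup\{z_t\}$. To recover $U_t$ from $\pi_t$, I would use the second disjunct of \textbf{PI4}, which exposes \emph{every} occurrence (positive or negative) of every variable in $Z_t$; together with the signs read off from $\pi_0$ this suffices to decide $Z_t$-uniqueness of each clause, and the third disjunct of \textbf{PI4} then also exposes the positive literals of every $Z_t$-unique clause, allowing $U_t(x)$ to be counted for each $x\in V$. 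Finally $\phi_{t+1}$ is read off by applying \textbf{PI1} with the now-known $Z_t$.

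The only step requiring real attention is that $\pi_t$ reveals \emph{all} occurrences of each $z_s$ ($s\le t$), not merely the positive ones---this is what makes $Z_t$-uniqueness decidable from $\pi_t$ alone, and it is guaranteed precisely by the ``$|\PHI_{ij}|\in Z_t$'' disjunct in \textbf{PI4}. No genuine obstacle arises; the verification is pure bookkeeping, which is why the author labels it a fact that is ``immediate from the construction.''
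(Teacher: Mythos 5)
Your proof is correct and matches the approach the paper implicitly relies on (the paper labels this a ``Fact'' that is ``immediate from the construction'' and gives no further argument). You have correctly identified the non-obvious ingredient: the disjunct ``$|\PHI_{ij}|\in Z_t$'' in \textbf{PI4} is what makes $Z_t$-uniqueness, and hence $U_t$ and $U_t(\cdot)$, recoverable from $\pi_t$ alone, since a position $(i,j)$ has its variable in $Z_t$ exactly when $\pi_t(i,j)$ is a literal whose variable lies in $Z_t$, and still has $\pi_t(i,j)\in\{\pm1\}$ whenever the variable lies outside $Z_t$. Combined with the signs from $\pi_0$ and the induction hypothesis (which supplies $\phi_t$, $U_{t-1}(\cdot)$, $Z_{t-1}$ as functions of $\pi_0,\ldots,\pi_{t-1}$), this pins down $j_t$, $z_t$, $Z_t$, $U_t$, $U_t(\cdot)$, and then $\phi_{t+1}$ in the order you describe. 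The bookkeeping is exactly what the paper would say if it spelled this out.
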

Intuitively, that a random variable $X$ is $\cF_t$-measurable means that its value is determined by time $t$.
The following is the key fact about the sequence $\pi_t(i,j)$.

\begin{proposition}\label{Prop_card}
Let $\cE_t$ be the set of all pairs $(i,j)$ such that $\pi_t(i,j)\in\{-1,1\}$.
The conditional joint distribution of the variables $(|\PHI_{ij}|)_{(i,j)\in\cE_t}$ given $\cF_t$ is uniform over $(V\setminus Z_t)^{\cE_t}$.
That is, for any formula $\Phi$ and for any map $f$ from $\cE_t\brk\Phi$ to $V\setminus Z_t\brk\Phi$ we have
	$$\pr\brk{\forall (i,j)\in\cE_t\brk\Phi:|\PHI_{ij}|=f(i,j)|\cF_t}(\Phi)=|V\setminus Z_t\brk\Phi|^{-|\cE_t\brk\Phi|}.$$
\end{proposition}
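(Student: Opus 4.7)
The plan is to reduce the claim to a combinatorial \emph{swap invariance} and prove that by induction on $t$. For $\Phi \in \Omega_k(n,m)$ and a map $f : \cE_t[\Phi] \to V \setminus Z_t[\Phi]$, write $\Phi^f$ for the formula obtained from $\Phi$ by replacing $|\Phi_{ij}|$ with $f(i,j)$ at every $(i,j) \in \cE_t[\Phi]$ and leaving every sign and every variable outside $\cE_t[\Phi]$ untouched. The invariance to prove is $\pi_s[\Phi^f] = \pi_s[\Phi]$ for all $0 \le s \le t$. Granting this, the $\cF_t$-atom containing $\Phi$ is exactly $\{\Phi^f : f : \cE_t[\Phi] \to V \setminus Z_t[\Phi]\}$: any formula in the atom must match $\Phi$ on signs and on variables outside $\cE_t[\Phi]$ (both are encoded in $\pi_t$), and its variables at positions in $\cE_t[\Phi]$ must lie in $V \setminus Z_t[\Phi]$ by the reveal rule in $\textbf{PI4}$. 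Since $\PHI$ is uniform on $\Omega_k(n,m)$, the conditional distribution over the atom is uniform and the proposition follows by counting.

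For the induction, the base case $t = 0$ is immediate: $\pi_0$ depends only on the signs and on the positive-literal variables of $U_0$-clauses, and both are left alone by any swap at $\cE_0$-positions, while $Z_0 = \emptyset$. For the inductive step, given $(\Phi, f)$ I would first extend $f$ to $\tilde f : \cE_{t-1}[\Phi] \to V \setminus Z_{t-1}[\Phi]$ by keeping the original variables at positions in $\cE_{t-1}\setminus\cE_t$; this is well-defined because $\cE_t \subseteq \cE_{t-1}$, $Z_{t-1} \subseteq Z_t$, and positions in $\cE_{t-1}$ carry variables outside $Z_{t-1}$ (since $\textbf{PI4}$ at every $s \le t-1$ has already revealed every occurrence of each $z_s$). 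Because $\Phi^f = \Phi^{\tilde f}$, the inductive hypothesis gives $\pi_s[\Phi^f] = \pi_s[\Phi]$ for $s \le t-1$ and in particular the agreement of $\phi_s, z_s, Z_s, U_s, U_s(\cdot)$ across $\Phi$ and $\Phi^f$ for every such $s$. Running $\textbf{PI1}$--$\textbf{PI4}$ at time $t$ then produces identical outputs on $\Phi$ and $\Phi^f$: $\textbf{PI1}$ is preserved because ``$\Phi_i$ contains a variable from $Z_{t-1}$'' has the same truth value (variables at $\cE_{t-1}$-positions of $\Phi^f$ still lie in $V \setminus Z_{t-1}$); $\textbf{PI2}$ is preserved because the positions $(\phi_t, j)$ with $j \le k_1$ are about to be revealed by $\textbf{PI4}$ and hence lie outside $\cE_t$, so $f$ does not touch them and $j_t, z_t$ are unchanged; $\textbf{PI3}$ is preserved because the positions $(i,j)$ with $|\Phi_{ij}| \in Z_t$ are exactly the non-$\cE_t$ positions carrying a $Z_t$-variable, and these coincide in $\Phi$ and $\Phi^f$; and $\textbf{PI4}$ then reads off $\pi_t$ from data that all agree.

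The main (and essentially only) obstacle is the bookkeeping in the inductive step: one must identify, inside each of $\textbf{PI1}$--$\textbf{PI4}$, every piece of information that could a priori depend on the $\cE_t$-coordinates we are free to vary, and check that each such piece is in fact determined by $\pi_{t-1}$ together with data outside $\cE_t$. The key structural observation that makes all four verifications go through is that by design the process has already exposed every occurrence of every $z_s$, every literal in the first $k_1$ slots of every $\phi_s$, and every positive literal of every $Z_s$-unique clause, so none of the algorithm's decisions ever inspects a coordinate of $\cE_t$; the residual freedom $V \setminus Z_t$ at these coordinates is genuinely unused by the process.
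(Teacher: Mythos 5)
Your proof is correct and takes essentially the same route as the paper: identify the $\cF_t$-atom $\brk{\Phi}_t$ with $(V\setminus Z_t\brk\Phi)^{\cE_t\brk\Phi}$ via the swap $f\mapsto\Phi_f$ and count. The paper simply asserts $\Phi_f\equiv_t\Phi$ and the bijectivity onto the atom, whereas your induction on $t$ (checking that \textbf{PI1}--\textbf{PI4} never inspect a coordinate of $\cE_t$) fills in exactly that omitted detail rather than introducing a genuinely different argument.
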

\begin{proof}
Let $\brk{\Phi}_t$ signify the $\equiv_t$-equivalence class of $\Phi$.
Let $\pr_\Phi$ denote the conditional probability distribution $\pr\brk{\cdot|\cF_t}(\Phi)$.
Then for any event $X$ we have
	\begin{equation}\label{eqPPhiUni}
	\pr_\Phi\brk{X}=\pr\brk{X|\brk{\Phi}_t}=\abs{\brk{\Phi}_t\cap X}/\abs{\brk{\Phi}_t}.
	\end{equation}
That is, the conditional distribution $\pr_\Phi$ is uniform over $\brk{\Phi}_t$.
Hence, we just need to determine $\abs{\brk{\Phi}_t}$.
Given a map $f:\cE_t\brk\Phi\rightarrow V\setminus Z_t\brk\Phi$, we define a formula $\Phi_f$ by letting
	$$(\Phi_f)_{ij}=\left\{\begin{array}{cl}
		\overline{f(i,j)}&\mbox{ if $(i,j)\in\cE_t\brk\Phi$ and $\pi_0(i,j)=-1$},\\
			f(i,j)&\mbox{ if $(i,j)\in\cE_t\brk\Phi$ and $\pi_0(i,j)=1$},\\
				\Phi_{ij}&\mbox{ otherwise}
			\end{array}\right.\quad(i\in\brk m,j\in\brk k).$$
Then $\Phi_f\equiv_t\Phi$.
Hence, we obtain a bijection % map
	$
	(V\setminus Z_t\brk\Phi)^{\cE_t\brk\Phi}\rightarrow\brk{\Phi}_t,\ f\mapsto\Phi_f,\
	$%\end{equation}
and thus the assertion follows from~(\ref{eqPPhiUni}).
\qed\end{proof}

In each step of the process {\bf PI1}--{\bf PI4} one variable $z_t$ is added to $Z_t$.
There is a chance that this variable occurs in several other all-negative clauses, and therefore
the stopping time $T$ should be smaller than the total number of all-negative clauses.
To prove this, we need the following lemma.

\begin{lemma}\label{Lemma_survive}
\Whp\ the following is true for all $1\leq t\leq\min\{T,n\}$:
the number of indices $i\in\brk{m}$ such that $\pi_t(i,j)=-1$ for all $1\leq j\leq k$ is at most
	$2n\omega\exp(-kt/n)/k$.
\end{lemma}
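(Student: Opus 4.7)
The plan is to re-express $N_t$ as a count of ``unexamined'' all-negative clauses whose variables avoid $Z_t$, observe that conditioning on an appropriate sub-$\sigma$-algebra makes this count a binomial variable, and then combine a Chernoff tail with a union bound over $t$. First, I would unpack the condition ``$\pi_t(i,j)=-1$ for all $j$'' using step~{\bf PI4}: every position $(i,j)$ must be unrevealed, which rules out $i\in\{\phi_1,\ldots,\phi_t\}$ (else positions $1,\ldots,k_1$ would be revealed) and also $|\PHI_{ij}|\in Z_t$ for any $j$, while conversely these conditions together with all-negativity of the signs suffice. Moreover any all-negative clause with index $i<\phi_t$ that is not itself some $\phi_s$ was bypassed by step~{\bf PI1} of an earlier iteration because it contained a $Z_{s-1}$-variable, and $Z_{s-1}\subseteq Z_t$. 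Writing $i_1<i_2<\cdots<i_{N_0}$ for the sorted indices of all-negative clauses and $r_t$ for the rank of $\phi_t$ in this list, I thus obtain
\[
N_t \;=\; |\{r>r_t:\PHI_{i_r}\cap Z_t=\emptyset\}|.
\]

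Second, I would introduce the $\sigma$-algebra $\cG_t$ generated by all $km$ signs $\pi_0(i,j)$ together with the full literals of $\PHI_1,\ldots,\PHI_{\phi_t}$.  By construction $N_0$, $r_t$, $\phi_t$ and $Z_t$ are all $\cG_t$-measurable.  Since clauses of $\PHI$ are sampled independently and within each literal the sign and underlying variable are independent, conditionally on $\cG_t$ the variables $|\PHI_{ij}|$ for $i>\phi_t$ are mutually independent and each uniform on $V$.  For every $r>r_t$ the event $\PHI_{i_r}\cap Z_t=\emptyset$ therefore has conditional probability $(1-t/n)^k$, independently across $r$, yielding
\[
N_t\mid\cG_t \;\sim\; \Bin\bigl(N_0-r_t,\,(1-t/n)^k\bigr).
\]

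Third, I would conclude via~(\ref{eqChernoff}).  A direct Chernoff bound on the binomial variable $N_0$ gives $N_0\le(1+\eps/10)n\omega/k$ \whp; on this event the conditional mean $\mu_t:=(N_0-r_t)(1-t/n)^k$ is bounded by $N_0\exp(-kt/n)\le(1+\eps/10)(n\omega/k)\exp(-kt/n)$.  When $\mu_t\ge\log^3 n$, multiplicative Chernoff with overshoot $1/2$ yields $N_t\le\tfrac{3}{2}\mu_t\le 2n\omega\exp(-kt/n)/k$ with conditional failure probability $\exp(-c\mu_t)\le n^{-\omega(1)}$ for some absolute $c>0$.  When $\mu_t<\log^3 n$, the additive form of Chernoff gives $N_t\le 2\log^3 n$ with the same tail, and for fixed $k$ the target $2n\omega\exp(-kt/n)/k\ge 2n\omega\eul^{-k}/k=\Omega(n)$ dwarfs $\log^3 n$.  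Summing the failure probabilities over $1\le t\le n$ yields $o(1)$ in total.

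The main delicacy is the conditional independence in the second step: the $\sigma$-algebra $\cG_t$ must record enough of the first $\phi_t$ clauses to pin down $\phi_t$, $r_t$, and $Z_t$ (which is why including all their literals is convenient), yet reveal nothing about clauses of index greater than $\phi_t$, so those remain ``fresh''.  This is possible precisely because in the random formula model clauses are sampled independently and, within each literal, sign and variable are independent; once this is verified, the Chernoff estimates and the final union bound are routine.
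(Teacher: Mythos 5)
The key claim of your proof — that $Z_t$, and hence the binomial structure of $N_t$, is measurable with respect to the $\sigma$-algebra $\cG_t$ generated by the signs (or $\pi_0$) together with the full literals of $\PHI_1,\ldots,\PHI_{\phi_t}$ — is false, and this breaks the argument. The variable $z_s$ added to $Z_s$ at step $s$ is chosen in {\bf PI2} according to the counts $U_{s-1}(|\PHI_{\phi_s j}|)$ for $j<k_1$, where $U_{s-1}(x)$ is the number of $Z_{s-1}$-unique clauses in which $x$ occurs positively. Already for $s=2$, determining whether a given clause is $Z_1$-unique requires knowing whether it contains $z_1$ (positively or negatively) and, for clauses with more than one positive literal, which of them equals $z_1$; for clauses of index greater than $\phi_t$ none of this is recorded in $\cG_t$. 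So $z_2,\ldots,z_t$ are \emph{not} $\cG_t$-measurable, and with that the claimed conditional distribution $N_t\mid\cG_t\sim\Bin\bigl(N_0-r_t,(1-t/n)^k\bigr)$ fails: the construction of $Z_t$ has already peeked at the clauses you wish to treat as fresh.

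This is precisely the subtlety the paper's framework is built to handle. The map $\pi_t$ is deliberately defined to reveal, in addition to the first $k_1$ literals of the clauses $\PHI_{\phi_1},\ldots,\PHI_{\phi_t}$, all occurrences of $Z_t$-variables anywhere in the formula and all positive literals of $Z_t$-unique clauses (\textbf{PI4}). That is exactly enough information to make $Z_t$ and $U_t(\cdot)$ measurable with respect to $\cF_t$ (Fact~\ref{Fact_messbar}) while still leaving the remaining literal positions conditionally i.i.d.\ uniform over $V\setminus Z_t$ (\Prop~\ref{Prop_card}). Since the set of still-hidden positions is itself a nontrivial $\cF_t$-measurable object, one does not get a clean conditional binomial for $N_t$; instead the paper bounds moments. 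Writing $\cN_{sij}$ for the indicator that position $(i,j)$ is still hidden after step $s$, \Lem~\ref{Lemma_filt} together with \Prop~\ref{Prop_card} gives $\Erw[\prod_{i\in\cI}Y_i]\leq\bigl(2^{-k}(1-1/n)^{kt}\bigr)^{|\cI|}$ for each set $\cI$ of $\mu=\lceil\ln^2n\rceil$ clause indices, and \Lem~\ref{Lemma_moment} converts this $\mu$-wise product bound into the desired whp bound on $N_t$. Your Chernoff-and-union-bound endgame would be fine if the conditional binomial claim held, but establishing that conditional structure is the whole difficulty, and the filtration you propose does not achieve it.
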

\begin{proof}
We consider the random variables
	\begin{eqnarray*}
	\cN_{tij}&=&\left\{\begin{array}{cl}
		1&\mbox{ if $\pi_t(i,j)=-1$ and $t\leq T$},\\
		0&\mbox{ otherwise}\end{array}\right.\quad
	\qquad(i\in\brk{m},\,j\in\brk{k},\,t\geq0).
	\end{eqnarray*}
Let $t\leq n$, $\mu=\lceil\ln^2n\rceil$, and let $\cI\subset\brk{m}$ be a set of size $\mu$. %\xi n$, where $\xi=3\omega\exp(-kt/n)/k$.
Let $Y_{i}=1$ if $t\leq T$ and $\pi_t(i,j)=-1$ for all $j\in\brk{k}$,
and  let $Y_{i}=0$ otherwise.
Set
	$\cJ=\brk t\times\cI\times\brk k.$
If $Y_{i}=1$ for all $i\in\cI$, then
	$\cN_{0ij}=1$ for all $(i,j)\in\cI\times\brk k$ and $\cN_{sij}=1$ for all $(s,i,j)\in\cJ$,
and we will prove below that
	\begin{equation}\label{eqsurvive0}
	\Erw\brk{\prod_{(i,j)\in\cI\times\brk{k}}\cN_{0ij}\cdot\prod_{(t,i,j)\in\cJ}\cN_{tij}%\cdot\prod_{(t,i,j)\in\cK}\cT_{tij}
			}\leq2^{-k|\cI|}(1-1/n)^{|\cJ|}.
	\end{equation}
Hence, 
	\begin{eqnarray}\label{eqsurvive1}
	\Erw\brk{\prod_{i\in\cI}Y_i}&\leq&\brk{2^{-k}(1-1/n)^{kt}}^\mu\leq\lambda^\mu,\qquad
			\mbox{where }\lambda=2^{-k}\exp(-kt/n).
	\end{eqnarray}
Combining the bound~(\ref{eqsurvive1}) with \Lem~\ref{Lemma_moment},
we see that with probability at least $1-n^{-10}$ there are no more than
$2\lambda m$ indices $i\in\brk{m}$ such that $\pi_t(i,j)=-1$ for all $j\in\brk{k}$.
Hence, by the union bound the probability that this holds for all $t\leq\min\{T,n\}$ is at least $1-n^{-9}$.
As $2\lambda m\leq2n\omega\exp(-kt/n)/k$, this implies the assertion.

To complete the proof, we need to establish~(\ref{eqsurvive0}).
Let
	$$X_0=\prod_{(i,j)\in\cI\times\brk{k}}\cN_{0ij},\ \cJ_t=\{(i,j):(t,i,j)\in\cJ\},\mbox{ and }
	X_t=\prod_{(i,j)\in\cJ_t}\cN_{tij}.$$
Since the signs of the literals $\PHI_{ij}$ are mutually independent, we have
	\begin{eqnarray}\label{eqSN1}
	\Erw\brk{X_0}&=&2^{-k|\cI|}.
	\end{eqnarray}
Furthermore, we claim that
	\begin{eqnarray}\label{eqSN2}
	\Erw\brk{X_t|\cF_{t-1}}&\leq&%\Erw\brk{\prod_{(i,j)\in\cJ_t}\cR_{tij}|\cF_{t-1}}
		(1-1/n)^{\abs{\cJ_t}};
	\end{eqnarray}
then (\ref{eqsurvive0}) follows by plugging~(\ref{eqSN1}) and~(\ref{eqSN2}) into \Lem~\ref{Lemma_filt}.

To prove~(\ref{eqSN2}), let $t\geq1$.
If $T<t$ or $\pi_{t-1}(i,j)\not=-1$ for some $(i,j)\in\cJ_t$, then clearly $X_t=\cN_{tij}=0$.
Hence, suppose that $T\geq t$ and $\pi_{t-1}(i,j)=-1$ for all $(i,j)\in\cJ_t$.
Then at time $t$ {\bf PI2} selects some variable $z_t\in V\setminus Z_{t-1}$, % (cf.\ the definition of $\pi_t$ in step~{\bf PI4}),
and $\cN_{tij}=1$ only if $|\PHI_{ij}|\not=z_t$.
As $\pi_{t-1}(i,j)=-1$ for all $(i,j)\in\cJ_t$, given $\cF_{t-1}$ the variables $(|\PHI_{ij}|)_{(i,j)\in\cJ_t}$
are mutually independent and uniformly distributed over $V\setminus Z_{t-1}$ by \Prop~\ref{Prop_card}.
Therefore, for each $(i,j)\in\cJ_t$ independently  we have $|\PHI_{ij}|=z_t$ with probability at least $1/n$,
whence~(\ref{eqSN2}) follows.
\qed\end{proof}

\begin{corollary}\label{Cor_T}
\Whp\ we have $T<4nk^{-1}\ln\omega$.
\end{corollary}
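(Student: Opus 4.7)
The plan is to combine the quantitative tail bound of \Lem~\ref{Lemma_survive} with a one-line monotonicity argument. First I would set
\[R_t=\{i\in\brk m:\pi_t(i,j)=-1\text{ for every }j\in\brk k\},\]
since this is precisely the quantity that \Lem~\ref{Lemma_survive} controls, and then I would verify that $R_t$ coincides with the set of all-negative clauses $\PHI_i$ that are disjoint from $Z_t$. One direction is automatic; for the other, if $\PHI_i$ is all-negative and disjoint from $Z_t$, then $\pi_0(i,j)=-1$ for every $j$ and none of the three rewriting conditions in \textbf{PI4} can trigger: $\PHI_i$ cannot be a previously chosen $\phi_s$ (else some variable of it would lie in $Z_s\subseteq Z_t$), no variable of $\PHI_i$ lies in $Z_t$, and $\PHI_i$ is never $Z_s$-unique because it has no positive literal. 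In particular the process halts exactly when $R_t=\emptyset$, so $R_T=\emptyset$ by definition of $T$.

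Next I would observe that $|R_t|$ drops by at least one at each completed iteration: by \textbf{PI1} the index $\phi_t$ lies in $R_{t-1}$, and by \textbf{PI2} the new $z_t=|\PHI_{\phi_t j_t}|$ enters $Z_t$, so $\PHI_{\phi_t}$ now contains a variable from $Z_t$; combined with the obvious inclusion $R_t\subseteq R_{t-1}$ this gives $|R_t|\le|R_{t-1}|-1$ for every $1\le t\le T$. Iterating down to $T$ yields the deterministic bound
\[T\le t+|R_t|\quad\text{for every }0\le t\le T.\]

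To finish, I would take $t^\star=\lceil 3nk^{-1}\ln\omega\rceil$, which is at most $n$ for large $k$. If $T\le t^\star$ then $T<4nk^{-1}\ln\omega$ trivially. Otherwise \Lem~\ref{Lemma_survive} applies at $t=t^\star$ and gives, \whp,
\[|R_{t^\star}|\le\frac{2n\omega}{k}\exp\bc{-\frac{kt^\star}{n}}\le\frac{2n}{k\omega^2},\]
so $T\le t^\star+|R_{t^\star}|\le 3nk^{-1}\ln\omega+1+2n/(k\omega^2)<4nk^{-1}\ln\omega$ once $\omega$ (equivalently $k$) is sufficiently large. The only piece that needs a moment's thought is the identification of $R_t$ with the remaining ``eligible'' clauses, i.e.\ ruling out the possibility that some $\pi_t(i,j)$ for an all-negative clause disjoint from $Z_t$ has been overwritten via the $Z_s$-unique branch of \textbf{PI4}; after that the argument reduces to a deterministic pigeonhole on the decreasing sequence $|R_t|$ fed with the probabilistic input of \Lem~\ref{Lemma_survive}.
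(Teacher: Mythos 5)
Your proof is correct and follows essentially the same route as the paper: the paper also tracks the quantity $I_t=|R_t|$, uses the deterministic monotone decrease $I_t\le I_{t-1}-1$ guaranteed by \textbf{PI2}, and feeds in \Lem~\ref{Lemma_survive} at an intermediate time to conclude $T<4nk^{-1}\ln\omega$ (the paper picks $t_0=2nk^{-1}\ln\omega$ and argues that $T\ge 2t_0$ would force $I_{t_0}\ge t_0$, contradicting the lemma, whereas you pick $t^\star=3nk^{-1}\ln\omega$ and pigeonhole directly via $T\le t^\star+|R_{t^\star}|$; the two are numerically different but logically identical). Your extra care in verifying $\phi_t\in R_{t-1}$ and ruling out overwrites of $\pi_t$ on all-negative clauses disjoint from $Z_t$ is sound, though the paper leaves it implicit.
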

\begin{proof}
Let $t_0=2nk^{-1}\ln\omega$ and let $I_t$ be the number of indices $i$ such that $\pi_t(i,j)=-1$ for all $1\leq j\leq k$.
Then {\bf PI2} ensures that $I_t\leq I_{t-1}-1$ for all $t\leq T$.
Consequently, 
if $T\geq 2t_0$, then $0\leq I_T\leq I_{t_0}-t_0$, and thus $I_{t_0}\geq t_0$.
Since $2nk^{-1}\ln\omega>3n\omega\exp(-kt_0/n)/k$ for sufficiently large $k$, \Lem~\ref{Lemma_survive} entails that
	\begin{eqnarray*}
	\pr\brk{T\geq2t_0}&\leq&
	\pr\brk{I_{t_0}\geq t_0}=\pr\brk{I_{t_0}\geq 2nk^{-1}\ln\omega}
			\leq\pr\brk{I_{t_0}>3n\omega\exp(-kt_0/n)/k}=o(1).
	\end{eqnarray*}
Hence, $T<2t_0$ \whp\
\qed\end{proof}
For the rest of this section we let
	$$\theta=\lfloor4nk^{-1}\ln\omega\rfloor.$$

The next goal is to estimate the number of $Z_t$-unique clauses, i.e., the size of the set $U_t$.
For technical reasons we will consider a slightly bigger set:
let $\cU_t$ be the set of all $i\in\brk m$ such that
there is an index $j$ such that $\pi_0(i,j)\not=-1$ but
there exists no $j$ such that $\pi_t(i,j)\in\{1\}\cup\bar Z_t$.
That is, clause $\PHI_i$ contains a positive literal, but by time $t$ there
is at most one positive literal $\PHI_{ij}$ left that does not belong to $Z_t$,
and $\PHI_i$ has no negative literal whose underlying variable lies in $Z_t$.
In \Sec~\ref{Sec_danger} we will establish the following bound.

\begin{lemma}\label{Lemma_danger}
\Whp\ we have
	$\max_{0\leq t\leq T}|\cU_t|\leq (1+\eps/3)\omega n$.
\end{lemma}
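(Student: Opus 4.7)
The plan is to bound $|\cU_t|$ uniformly for $0 \leq t \leq \theta$ (which suffices because $T < \theta$ \whp\ by Corollary~\ref{Cor_T}) via a structural decomposition and then union bound. The key inclusion is
\[
\cU_t \subseteq U_t \cup C_t \cup V_t,
\]
where $C_t$ is the set of clauses with at least one positive literal, all positive variables in $Z_t$ and no negative variable in $Z_t$, and $V_t = \{\phi_s : 1 \leq s \leq t\}$ is the set of clauses visited by the process. To see this inclusion I do a case analysis on how the positive literals of $i \in \cU_t$ become revealed in $\pi_t$ via the three rules of step~{\bf PI4}: the variable joins $Z_t$, the clause is visited as some $\phi_s$, or the clause enters some $U_s$. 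The crucial monotonicity observation is that once $i \in U_s$ the number of positive variables of $\PHI_i$ still outside $Z$ can only decrease, so at time $t$ the clause $i$ has at most one such variable and therefore lies in $U_t \cup C_t$.

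Bounding the three pieces: first, $|V_t| \leq t \leq \theta = 4n\ln\omega/k$ is $o(\omega n)$ since $\omega=(1-\eps)\ln k$. For $|C_t|$, I apply Lemma~\ref{Lemma_moment} to the indicator $X_i = 1_{i \in C_t}$: conditional on $\cF_t$ (and hence on $Z_t$), Proposition~\ref{Prop_card} makes the unrevealed literals uniform on $V \setminus Z_t$, so for any realization with $|Z_t| \leq \theta$ the per-clause probability is $2^{-k}(1 - (1-|Z_t|/n)^k) \leq 2^{-k}$. This integrates into a tensor bound $\Erw[\prod_{i\in M}X_i] \leq 2^{-k\mu}$ and yields $|C_t| \leq 2\omega n/k = o(\omega n)$ \whp. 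The main work is $|U_t|$, whose expectation is close to $\omega n$; I again invoke Lemma~\ref{Lemma_moment}, this time with $\lambda = (1+\eps/20)\,k\cdot 2^{-k}$. The required tensor bound $\Erw[\prod_{i\in M}1_{i \in U_t}] \leq \lambda^\mu$ uses Proposition~\ref{Prop_card} together with the observation that only a vanishing fraction of the $\mu = \lceil\ln^2 n\rceil$ clauses in $M$ are ever touched by the process, so the joint probability factorizes up to a $(1+o(1))^\mu$ slack. This yields $|U_t| \leq (1+\eps/4)\omega n$ \whp.

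Summing the three bounds gives $|\cU_t| \leq (1+\eps/3)\omega n$ \whp\ for each fixed $t$, and a union bound over $t \in \{0,\ldots,\theta\}$ using the $n^{-10}$ guarantee of Lemma~\ref{Lemma_moment} produces the uniform statement. The main obstacle is the tensor bound on $|U_t|$: because the event $\{i \in U_t\}$ is $\cF_t$-measurable, one cannot naively condition on $\cF_t$ to obtain independence across $M$, and the argument instead isolates the clauses of $M$ that the algorithm has not yet touched so that their sign patterns and variables remain essentially uniform given the trajectory. I expect this to require the most bookkeeping, but it parallels the independence exploitation already carried out in the proof of Lemma~\ref{Lemma_survive}.
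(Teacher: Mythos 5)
The high-level plan --- decompose $\cU_t$, apply Lemma~\ref{Lemma_moment} to each piece, and union bound over $t\leq\theta$ --- is reasonable, and the decomposition $\cU_t\subseteq U_t\cup C_t$ is correct (your $V_t$ is redundant: every $\phi_s$ indexes an all-negative clause, which by definition cannot lie in $\cU_t$). But the proposal leaves a genuine gap at its hardest step, and the explanations offered for bridging it are wrong in a way that matters.

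The core of the lemma is the tensor bound $\Erw\brk{\prod_{i\in M}1_{i\in U_t}}\leq\lambda^\mu$, and you do not actually establish it. Membership in $U_t$ requires that all but one positive variable of $\PHI_i$ lies in $Z_t$, and $Z_t$ is a function of the entire process history, not just of clause $i$. Your sketch says the bound follows because ``only a vanishing fraction of the $\mu$ clauses in $M$ are ever touched by the process, so the joint probability factorizes up to a $(1+o(1))^\mu$ slack.'' This cannot be right: by {\bf PI4}, every clause $i\in U_t$ \emph{has} been touched --- all of its positive literals are already revealed in $\pi_t$ --- so there is no untouched subset of $M$ to isolate, and in fact the events $\{i\in U_t\}$ for $i\in M$ are correlated through the whole trajectory $(z_1,\ldots,z_t)$. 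The justification for $|C_t|$ has a similar flaw: you write ``conditional on $\cF_t$ \ldots\ the per-clause probability is $\leq 2^{-k}$'', but $\{i\in C_t\}$ is $\cF_t$-measurable, so after conditioning on $\cF_t$ its probability is $0$ or $1$, and the argument collapses.

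What the paper does in place of your factorization claim is the content of Lemma~\ref{Lemma_danger_prod} together with Lemmas~\ref{Lemma_danger_auxb} and~\ref{Lemma_danger_aux}, and this machinery is not optional. One decomposes by the number $l$ of positive literals of $\PHI_i$, by the positions $P_i\subset\brk{k}$ of the $l-1$ positive variables that end up in $Z_t$, and crucially by the \emph{times} $t_i(j)\in\brk{t}$ at which each such variable enters $Z$ (i.e.\ the index $s$ with $\PHI_{ij}=z_s$). Only for a fixed such specification do the hit indicators $\cH_{sij}$ and survival indicators $\cS_{sij}$ factorize across $i\in M$, and even then the factorization is not independence but the conditional-expectation telescoping of Lemma~\ref{Lemma_filt} over the filtration $(\cF_s)_s$, fueled by Proposition~\ref{Prop_card} one step at a time. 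Summing over the $\bink{k}{l-1}t^{l-1}$ choices of positions and times then produces an $l$- and $t$-dependent $\lambda$ (the $\Lambda_l(t)$ of Lemma~\ref{Lemma_danger_auxb}), and large $l$ needs the separate crude bound of Lemma~\ref{Lemma_danger_aux}. Without this explicit decomposition the per-clause probability does not ``factorize up to a slack,'' and Lemma~\ref{Lemma_moment} cannot be invoked. So the proposal has the right outer shape but is missing the argument that does the work.
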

Let us think of the variables $x\in V\setminus Z_t$ as ``bins'' and of the clauses $\PHI_i$ with $i\in U_t$ as ``balls''.
If we place each ball $i$ into the (unique) bin $x$ such that $x$ occurs positively in $\PHI_i$, then
by \Lem~\ref{Lemma_danger} the average number of balls in a bin is at most
	$$\frac{(1+\eps/3)\omega n}{|V\setminus Z_t|}=\frac{(1+\eps/3)\omega}{1-t/n}\qquad\mbox{\whp}$$
As $\omega\leq(1-\eps)\ln k$ and $t\leq T\leq 4nk^{-1}\ln\omega$ \whp\ by \Cor~\ref{Cor_T},
for large enough $k$ we have $(1+\eps/3)(1-t/n)^{-1}\omega\leq(1-0.6\eps)\ln k$.
Hence, if the ``balls'' were uniformly distributed over the ``bins'', we would expect
	$$|V\setminus Z_t|\exp(-|U_t|/|V\setminus Z_t|)\geq(n-t)k^{0.6\eps-1}\geq nk^{\eps/2-1}$$
``bins'' to be empty.
The next corollary shows that this is actually true.
We defer the proof to \Sec~\ref{Sec_nonSupporting}.

\begin{corollary}\label{Cor_nonSupporting}
Let $\cQ_t=\abs{\cbc{x\in V\setminus Z_t:U_t(x)=0}}$.
Then $\min_{t\leq T}\cQ_t\geq n k^{\eps/2-1}$ \whp
\end{corollary}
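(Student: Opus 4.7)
The plan is to apply the balls-and-bins bound of Lemma~\ref{Lemma_BallsIntoBins} in the framework described just before the corollary statement: the $n-t$ variables in $V\setminus Z_t$ play the role of bins, and each clause $i\in U_t$ plays the role of a ball, placed in the bin indexed by its unique positive variable in $V\setminus Z_t$. Under this correspondence, $\cQ_t$ is precisely the number of empty bins.

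First I would fix the parameters of the balls-and-bins instance uniformly in $t\leq T$. Lemma~\ref{Lemma_danger} yields $|U_t|\leq|\cU_t|\leq(1+\eps/3)\omega n$ w.h.p., and Corollary~\ref{Cor_T} gives $T\leq\theta=4nk^{-1}\ln\omega$ w.h.p., so $|V\setminus Z_t|\geq n-\theta$. With $\omega=(1-\eps)\ln k$ the average load satisfies $|U_t|/|V\setminus Z_t|\leq(1-0.6\eps)\ln k$, and so the parameter $\lambda=|V\setminus Z_t|\exp(-|U_t|/|V\setminus Z_t|)$ of Lemma~\ref{Lemma_BallsIntoBins} is at least $nk^{0.6\eps-1}\geq 2nk^{\eps/2-1}$ for large $k$. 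Once the lemma is applicable, it yields $\pr[\cQ_t<\lambda/2]\leq O(\sqrt{\omega n})\exp(-nk^{0.6\eps-1}/8)=o(1/n)$, and a union bound over the at most $T\leq n$ values of $t$ concludes.

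The main obstacle is that the balls are not literally i.i.d.\ uniform over the bins: the positive literals of the clauses in $U_t$ have been exposed during the process, and the sets $U_t$, $Z_t$ are themselves adaptive functions of $\PHI$. To handle this I would invoke a symmetry argument. Unconditionally the original positive variables of each clause are i.i.d.\ uniform over $V$ and independent across clauses; computing the conditional distribution shows that, conditional on a clause having exactly one positive variable in $V\setminus Z_t$ and no negative variable in $Z_t$, this surviving positive is uniform over $V\setminus Z_t$. Across clauses these surviving positives are mutually independent. The rigorous version of this symmetry is the main technical hurdle: because $Z_t$ is determined adaptively via \textbf{PI1}--\textbf{PI4}, one must condition on the trajectory $(\phi_s,z_s)_{s\leq t}$ together with the signs of all literals and then appeal to Proposition~\ref{Prop_card} for the distribution of the still-unexposed variables. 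With this coarser $\sigma$-algebra, the surviving positives of $U_t$-clauses are uniform over $V\setminus Z_t$ and independent across clauses, so Lemma~\ref{Lemma_BallsIntoBins} applies and the corollary follows. The hard part is this combinatorial bookkeeping showing that the symmetry survives the adaptive exposure driven by the algorithm.
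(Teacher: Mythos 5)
Your overall strategy is the same as the paper's: interpret the surviving positives of $U_t$-clauses as balls thrown into bins and invoke \Lem~\ref{Lemma_BallsIntoBins}, using \Lem~\ref{Lemma_danger} and \Cor~\ref{Cor_T} to control the average load. That part is fine. The gap is in the ``symmetry argument'' that you explicitly flag as the technical hurdle but then resolve incorrectly. You claim that given the sign trajectory, \Prop~\ref{Prop_card} makes the surviving positives of $U_t$-clauses i.i.d.\ uniform over $V\setminus Z_t$. But \Prop~\ref{Prop_card} only governs the literals with $\pi_t(i,j)\in\{-1,1\}$, i.e., the \emph{unexposed} ones; and step {\bf PI4} deliberately \emph{exposes} the positive literals of every $Z_t$-unique clause ($i\in U_t\wedge\pi_0(i,j)=1$ forces $\pi_t(i,j)=\PHI_{ij}$). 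So the positions of the very ``balls'' you want to randomize are exactly the literals for which \Prop~\ref{Prop_card} says nothing. The ``surviving positive'' of a $U_t$-clause is $\cF_t$-measurable, and you cannot treat it as a fresh uniform draw given $\cF_t$.

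The paper's actual proof works around this with a dedicated lemma (\Lem~\ref{Lemma_BallsUniform}), which establishes only a pointwise \emph{upper bound} $\pr\brk{\psi_t=\hat\psi_t\,|\,\cU_t=\hat\cU_t}\leq(n-t)^{-|\hat\cU_t|}$ -- not exact uniformity and independence. Proving even this requires conditioning on the coarser sign-trajectory $\sigma$-algebra ($\Gamma_s$), applying \Prop~\ref{Prop_card} separately at each time $s$ \emph{before} the corresponding clause's positives are exposed, and then carefully sewing the per-step estimates together via a past--future independence statement (equation~(\ref{eqBallUniform6})) and a $p_i/q_i$ ratio computation with three cases. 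The proof also has to handle that $\psi_t$ may map into $Z_t$ (when a clause's last positive literal itself gets placed in $Z$), which is why the corollary's proof uses $n$ bins and the threshold $\nu+t$, and why an extra multiplicative factor $(n/(n-t))^\mu$ appears before \Lem~\ref{Lemma_BallsIntoBins} is invoked. None of this is captured by your sketch, and your appeal to \Prop~\ref{Prop_card} at time $t$ would, if taken literally, be circular. In short: right plan, but the enabling lemma you wave at is both false as stated and not obtainable from \Prop~\ref{Prop_card} alone; it needs the separate argument the paper supplies.
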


Now that we know that there are ``a lot'' of variables $x\in V\setminus Z_{t-1}$ such
that $U_t(x)=0$ \whp,
we can prove that it is quite likely that clause $\PHI_{\phi_t}$ contains one.
More precisely, we have the following.

\begin{corollary}\label{Cor_PlanB}
Let
	$$\cB_t=\left\{\begin{array}{cl}
		1&\mbox{ if $\min_{1\leq j<k_1}U_{t-1}(|\PHI_{\phi_tj}|)>0$, $\cQ_{t-1}\geq nk^{\eps/2-1}$, $|U_t|\leq(1+\eps/3)\omega n$,
				and $T\geq t$,}\\
		0&\mbox{ otherwise.}
		\end{array}\right.$$
Then $\cB_t$ is $\cF_t$-measurable and $\Erw\brk{\cB_t|\cF_{t-1}}\leq\exp(-k^{\eps/6})$ for all $1\leq t\leq\theta$.
\end{corollary}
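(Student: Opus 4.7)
\textbf{Proof plan for Corollary~\ref{Cor_PlanB}.}
Measurability of $\cB_t$ with respect to $\cF_t$ is immediate from Fact~\ref{Fact_messbar}, since $\phi_t$, $Z_{t-1}$, $\cQ_{t-1}$ and the values $U_{t-1}(x)$ are $\cF_{t-1}$-measurable while $|U_t|$ is $\cF_t$-measurable.  For the conditional expectation bound I would first condition on $\cF_{t-1}$ and discard the case $\cQ_{t-1}<nk^{\eps/2-1}$ or $T<t$, where $\cB_t\equiv 0$; so I may assume $\cQ_{t-1}\geq nk^{\eps/2-1}$ and $T\geq t$, and it then suffices to bound $\pr\brk{\min_{1\leq j<k_1}U_{t-1}(|\PHI_{\phi_t j}|)>0\mid\cF_{t-1}}$.

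The central observation is that at time $t-1$ the first $k_1-1$ positions of clause $\PHI_{\phi_t}$ have not been exposed by the process, i.e.\ $\pi_{t-1}(\phi_t,j)=-1$ for every $1\leq j<k_1$.  To verify this I would check, for each $s\leq t-1$, that none of the three disjuncts in {\bf PI4} applies to $(i,j)=(\phi_t,j)$: (i)~$\phi_t\neq\phi_s$, because {\bf PI2} inserts some $z_s\in\PHI_{\phi_s}$ into $Z_s\subseteq Z_{t-1}$, while by choice of $\phi_t$ the clause $\PHI_{\phi_t}$ shares no variable with $Z_{t-1}$; (ii)~no literal of $\PHI_{\phi_t}$ has its underlying variable in $Z_s$, for the same reason; (iii)~$\phi_t\notin U_s$, because $U_s$ contains only clauses with a positive literal while $\PHI_{\phi_t}$ is all-negative.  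Hence $\pi_{t-1}(\phi_t,j)=\pi_0(\phi_t,j)=-1$ for every $j\in\brk{k}$.

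With this in hand, \Prop~\ref{Prop_card} implies that conditional on $\cF_{t-1}$ the variables $(|\PHI_{\phi_t j}|)_{1\leq j<k_1}$ are independent and uniform on $V\setminus Z_{t-1}$.  The ``safe'' variables (those $x$ with $U_{t-1}(x)=0$) form a subset of $V\setminus Z_{t-1}$ of size $\cQ_{t-1}\geq nk^{\eps/2-1}$, and $|V\setminus Z_{t-1}|\leq n$, so each position is unsafe with conditional probability at most $1-k^{\eps/2-1}$.  By independence, $\pr\brk{\forall j<k_1:\,U_{t-1}(|\PHI_{\phi_t j}|)>0\mid\cF_{t-1}}\le(1-k^{\eps/2-1})^{k_1-1}\le\exp(-k^{\eps/6})$ for $k$ large enough, using $k_1-1\geq k/3$ and $k^{\eps/3}\geq 3$.

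The only real obstacle is the bookkeeping in the second paragraph, namely verifying that none of the first $k_1-1$ literals of $\PHI_{\phi_t}$ was exposed at any earlier step of the process; once this is granted the remainder is a standard conditional tail bound via \Prop~\ref{Prop_card}.  The auxiliary condition $|U_t|\leq(1+\eps/3)\omega n$ does not enter the argument above, but is built into $\cB_t$ so that this corollary can later be combined, through \Lem~\ref{Lemma_filt} and \Lem~\ref{Lemma_moment}, with \Lem~\ref{Lemma_danger} and \Cor~\ref{Cor_nonSupporting} to control the total number of ``bad'' steps of Phase~1.
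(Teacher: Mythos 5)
Your proposal is correct and follows the paper's argument almost exactly: after conditioning on $\cF_{t-1}$ and restricting to the case where the $\cF_{t-1}$-measurable conditions hold, one observes that all $k$ positions of $\PHI_{\phi_t}$ are still unexposed (i.e.\ $\pi_{t-1}(\phi_t,j)=-1$), applies \Prop~\ref{Prop_card} to get that $(|\PHI_{\phi_tj}|)_{j<k_1}$ are i.i.d.\ uniform on $V\setminus Z_{t-1}$, and then the product bound $(1-k^{\eps/2-1})^{k_1-1}\leq\exp(-k^{\eps/6})$ finishes. The only difference is that you spell out the case check on \textbf{PI4} showing that no disjunct ever applied to $(\phi_t,j)$, whereas the paper simply asserts $\pi_{t-1}(\phi_t,j)=-1$ for all $j$; your verification is correct, and your remark that the $|U_t|\leq(1+\eps/3)\omega n$ clause plays no role in the bound matches the paper's proof, which likewise never uses it.
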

\begin{proof}
Since the event $T<t$ and the random variable $\cQ_{t-1}$ are $\cF_{t-1}$-measurable
and as $U_{t-1}(|\PHI_{\phi_tj}|)$ is $\cF_t$-measurable for any $j<k_1$ by Fact~\ref{Fact_messbar}, $\cB_t$ is $\cF_t$-measurable.
Let $\Phi$ be such that $T\brk{\Phi}\geq t$, $\cQ_{t-1}\brk{\Phi}\geq nk^{\alpha-1}$,
	and $|U_{t-1}\brk\Phi|\leq(1+\eps/3)\omega n$.
We condition on the event $\PHI\equiv_{t-1}\Phi$.
Then at time $t$ the process {\bf PI1}--{\bf PI4} selects $\phi_t$
such that $\pi_{t-1}(\phi_t,j)=-1$ for all $j\in\brk{k}$.
Hence, by \Prop~\ref{Prop_card} %in the probability distribution $\pr\brk{\cdot|\cF_{t-1}}(\Phi)$
the variables $|\PHI_{\phi_t j}|$ are uniformly distributed and mutually independent elements of $V\setminus Z_{t-1}$.
Consequently, for each $j<k_1$ the event $U_{t-1}(|\PHI_{\phi_t j}|)=0$
occurs with probability $|\cQ_{t-1}|/|V\setminus Z_{t-1}|\geq k^{\eps/2-1}$ independently.
Thus, the probability that $U_{t-1}(|\PHI_{\phi_t j}|)>0$ for \emph{all} $j<k_1$ is at most
$(1-k^{\eps/2-1})^{k_1-1}\leq\exp(-k^{\eps/6})$.
\qed\end{proof}

\noindent\emph{Proof of \Prop~\ref{Prop_process}.}
The definition of the process {\bf PI1}--{\bf PI4} mirrors the execution of the algorithm,
i.e., the set $Z$ obtained after Steps~1a--1d of \Fix\ equals the set $Z_T$.
Therefore, the first item of \Prop~\ref{Prop_process}
is an immediate consequence of \Cor~\ref{Cor_T} and the fact that $|Z_t|=t$ for all $t\leq T$.
Furthermore, the second assertion follows directly from \Lem~\ref{Lemma_danger}.

To prove the third claim, we need to bound the number of clauses that are unsatisfied under the assignment $\sigma_{Z_T}$
that sets all variables in $V\setminus Z_T$ to true and all variables in $Z_T$ to false.
By construction any all-negative clause contains a variable from $Z_T$ and is thus satisfied under $\sigma_{Z_T}$.
We claim that for any $i\in\brk{m}$ such that $\PHI_i$ is unsatisfied under $\sigma_{Z_T}$ one of the following is true.
\begin{enumerate}
\item[a.] There is $t\leq T$ such that $i\in U_{t-1}$ and $z_t$ occurs positively in $\PHI_i$.
\item[b.] There are $1\leq j_1<j_2\leq k$ such that $\PHI_{ij_1}=\PHI_{ij_2}$.
\end{enumerate}
To see this, assume that b.\ does not occur.
Let us assume without loss of generality that $\PHI_{i1},\ldots,\PHI_{il}$ are positive
and $\PHI_{i l+1},\ldots,\PHI_{ik}$ are negative for some $l\geq1$.
Since $\PHI_i$ is unsatisfied under $\sigma_{Z_T}$, we have $\PHI_{i1},\ldots,\PHI_{il}\in Z_T$.
Hence, for each $1\leq j\leq l$ there is $t_j\leq T$ such that $\PHI_{ij}=z_{t_j}$.
As $\PHI_{i1},\ldots,\PHI_{ik}$ are distinct, the indices $t_1,\ldots,t_l$ are mutually distinct, too.
Assume that $t_1<\cdots<t_l$, and let $t_0=0$.
Then $\PHI_i$ contains precisely one positive literal from $V\setminus Z_{t_{l-1}}$.
Hence, $i\in U_{t_{l-1}}$.
Since $\PHI_i$ is unsatisfied under $\sigma_{Z_T}$ no variable from $Z_T$ occurs negatively in $\PHI_i$
and thus $i\in U_s$ for all $t_{l-1}\leq s<t_l$.
Therefore, $i\in U_{t_l-1}$ and $z_{t_l}=\PHI_{i l}$, i.e., a.\ occurs.

Let $\cX$ be the number of indices $i\in\brk{m}$ such that a.\ occurs.
We claim that
	\begin{equation}\label{eqProcess1}
	\cX\leq n\exp(-k^{\eps/7})\qquad\mbox{\whp}
	\end{equation}
Since the number of $i\in\brk m$ for which b.\ occurs is $O(\ln n)$ \whp\ by \Lem~\ref{Lemma_double},
(\ref{eqProcess1}) implies the third assertion.

To establish~(\ref{eqProcess1}), let $\cB_t$ be as in \Cor~\ref{Cor_PlanB} and set
	$$\cD_t=\left\{\begin{array}{cl}
		U_{t-1}(z_t)&\mbox{if $\cB_t=1$ and $U_{t-1}(z_t)\leq\ln^2n$,}\\
		0&\mbox{otherwise}.\end{array}\right.$$
Then by the definition of the random variables $\cB_t,\cD_t$ either
	\begin{equation}\label{eqProcess2}
	\cX\leq\sum_{1\leq t\leq\theta}\cD_t %in which case the assertion follows from~(\ref{eqMart3}),
	\end{equation}
or one of the following events occurs:
\begin{enumerate}
\item[i.] $T>\theta$.
\item[ii.] $\cQ_t<nk^{\eps/2-1}$ for some $0\leq t\leq T$.
\item[iii.] $|U_t|>(1+\eps/3)\omega n$ for some $1\leq t\leq T$.
\item[iv.] $|U_{t-1}(z_t)|>\ln^2n$ for some $1\leq t\leq\theta$.
\end{enumerate}
The probability of i.\ is $o(1)$ by \Cor~\ref{Cor_T}.
Moreover, ii.\ does not occur \whp\ by \Cor~\ref{Cor_nonSupporting},
and the probability of iii.\ is $o(1)$ by \Lem~\ref{Lemma_danger}.
If iv.\ occurs, then the variable $z_t$ occurs in at least $\ln^2n$ clauses
for some $1\leq t\leq\theta$, which has probability $o(1)$ by \Lem~\ref{Lemma_double}.
Hence, (\ref{eqProcess2}) is true \whp

Thus, we need to bound $\sum_{1\leq t\leq\theta}\cD_t$.
The random variable $\cD_t$ is $\cF_t$-measurable and $\cD_t=0$ for all $t>\theta$.
Let $\bar\cD_t=\Erw\brk{\cD_t|\cF_{t-1}}$ and $\cM_t=\sum_{s=1}^t\cD_s-\bar\cD_s$.
Then $(\cM_t)_{1\leq t\leq \theta}$ is a martingale.
As all increments $\cD_s-\bar\cD_s$ are less than $\ln^2n$ in absolute value by the definition of $\cD_t$,
\Lem~\ref{Azuma} (Azuma-Hoeffding) entails that $\cM_\theta=o(n)$ \whp\
Hence, \whp\ we have
	\begin{equation}\label{eqMart}
	\sum_{1\leq t\leq\theta}\cD_t=o(n)+\sum_{1\leq t\leq\theta}\bar\cD_t.
	\end{equation}
We claim that
	\begin{equation}\label{eqMart0}
	\bar\cD_t\leq2\omega\exp(-k^{\eps/6})\qquad\mbox{for all }1\leq t\leq\theta.
	\end{equation}
For	by \Cor~\ref{Cor_PlanB} we have
	\begin{equation}\label{eqMart1}
	\Erw\brk{\cB_t|\cF_{t-1}}\leq\exp(-k^{\eps/6}).
	\end{equation}
Moreover, given $\cF_{t-1}$ we have $\pi_{t-1}(\phi_t,k_1)=-1$, whence
$z_t$ is uniformly distributed over $V\setminus Z_{t-1}$ by \Prop~\ref{Prop_card}.
Since $\cB_t=1$ implies $|U_{t-1}|\leq(1+\eps/3)\omega n$, this means that
the conditional expectation of $U_{t-1}(z_t)$ is at most
	\begin{equation}\label{eqMart2}
	|U_{t-1}|/|V\setminus Z_{t-1}|\leq\frac{(1+\eps/3)\omega n}{n-t}\leq2\omega.
	\end{equation}
Combining~(\ref{eqMart1}) and~(\ref{eqMart2}), we obtain~(\ref{eqMart0}).
Further, plugging~(\ref{eqMart0}) into~(\ref{eqMart}), we get
	$$%\begin{equation}\label{eqMart3}
	\sum_{1\leq t\leq\theta}\cD_t=2\omega\exp(-k^{\eps/2}/3)\theta+o(n)\leq3\omega\exp(-k^{\eps/6})\theta
		\leq n\exp(-k^{\eps/7})\qquad\mbox{\whp}
	$$%\end{equation}
Thus, (\ref{eqProcess1}) follows from~(\ref{eqProcess2}).
\qed

\subsection{Proof of \Lem~\ref{Lemma_danger}}\label{Sec_danger}

For integers $t\geq1$, $i\in\brk{m}$, $j\in\brk{k}$ let
	\begin{eqnarray}\label{eqSH}
	\cH_{tij}&=&\left\{\begin{array}{cl}
		1&\mbox{ if $\pi_{t-1}(i,j)=1$ and }\pi_t(i,j)=z_t\\
		0&\mbox{ otherwise,}\end{array}\right.\quad
	\cS_{tij}=\left\{\begin{array}{cl}
		1&\mbox{ if $T\geq t$ and }\pi_t(i,j)\in\{1,-1\}\\%\pi_{t-1}(i,j)\in\{1,-1\}\\
		0&\mbox{ otherwise.}\end{array}\right.
	\end{eqnarray}

\begin{lemma}\label{Lemma_danger_prod}
For any two sets $\cI,\cJ\subset\brk{\theta}\times\brk{m}\times\brk{k}$ we have
	$$\Erw\brk{\prod_{(t,i,j)\in\cI}\cH_{tij}\cdot\prod_{(t,i,j)\in\cJ}\cS_{tij}|\cF_0}\leq\bc{n-\theta}^{-|\cI|}\bc{1-1/n}^{|\cJ|}.$$
\end{lemma}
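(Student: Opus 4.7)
The plan is to slice the product by the time index $t$ and apply \Lem~\ref{Lemma_filt} along the filtration $(\cF_t)_{t\ge 0}$. For $1\le t\le\theta$ let $\cI_t=\{(i,j):(t,i,j)\in\cI\}$ and $\cJ_t=\{(i,j):(t,i,j)\in\cJ\}$, and set
$$X_t \;=\; \prod_{(i,j)\in\cI_t}\cH_{tij}\,\cdot\,\prod_{(i,j)\in\cJ_t}\cS_{tij}.$$
By Fact~\ref{Fact_messbar} each $X_t$ is $\cF_t$-measurable. If I can establish the per-step bound
$$\Erw[X_t\mid\cF_{t-1}] \;\le\; \xi_t \;:=\; (n-\theta)^{-|\cI_t|}(1-1/n)^{|\cJ_t|}\qquad(1\le t\le\theta),$$
then \Lem~\ref{Lemma_filt} yields the claim, because $\prod_t\xi_t=(n-\theta)^{-|\cI|}(1-1/n)^{|\cJ|}$.

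To analyse this per-step conditional expectation I first read off what $\cH_{tij}=1$ and $\cS_{tij}=1$ mean from the update rule {\bf PI4}: $\cH_{tij}=1$ forces $\pi_{t-1}(i,j)=1$ together with $|\PHI_{ij}|=z_t$, while $\cS_{tij}=1$ forces $\pi_{t-1}(i,j)\in\{-1,+1\}$ together with $|\PHI_{ij}|\neq z_t$. Hence on the $\cF_{t-1}$-event $\{T<t\}$, or on any $\cF_{t-1}$-event on which some required $\pi_{t-1}$-value fails, we have $X_t=0$ and the bound is trivial. Otherwise $\cI_t\cup\cJ_t\subseteq\cE_{t-1}$. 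Two observations matter: since $\PHI_{\phi_t}$ is all-negative and has not been touched at any earlier step, $\pi_{t-1}(\phi_t,j)=-1$ for every $j$, so no pair $(\phi_t,\cdot)$ can lie in $\cI_t$; and because clause (a) of {\bf PI4} fully exposes $\PHI_{\phi_t,j}$ for $j\le k_1$, no pair $(\phi_t,j)$ with $j\le k_1$ can lie in $\cJ_t$. Consequently the $k_1$ selector coordinates $\{(\phi_t,j):j\le k_1\}$ that determine $z_t$ are disjoint from the test set $\cI_t\cup\cJ_t$.

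With this in hand, \Prop~\ref{Prop_card} says that, conditionally on $\cF_{t-1}$, the family $\{|\PHI_{ij}|:(i,j)\in\cE_{t-1}\}$ is mutually independent and uniform on $V\setminus Z_{t-1}$, a set of cardinality $n-(t-1)\ge n-\theta$. Conditioning further on the selector coordinates fixes $z_t\in V\setminus Z_{t-1}$, while by the previous paragraph the remaining $|\PHI_{ij}|$ with $(i,j)\in\cI_t\cup\cJ_t$ are still independent uniform on $V\setminus Z_{t-1}$. Each $(i,j)\in\cI_t$ then contributes a factor $\Pr[|\PHI_{ij}|=z_t]=1/(n-t+1)\le 1/(n-\theta)$, and each $(i,j)\in\cJ_t$ contributes a factor $\Pr[|\PHI_{ij}|\neq z_t]=1-1/(n-t+1)\le 1-1/n$; independence lets me multiply these probabilities to get $\xi_t$, and integrating over $z_t$ preserves the bound. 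The main obstacle to sidestep is precisely the statistical entanglement of $z_t$ with the exposed variables of clause $\phi_t$; the quarantine of the $k_1$ selector coordinates from $\cI_t\cup\cJ_t$---forced by the all-negativity of $\phi_t$ together with the exposure pattern of {\bf PI4}---is what makes the conditioning argument clean, after which the rest is a mechanical product of per-coordinate uniform probabilities.
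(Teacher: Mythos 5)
Your proof is correct and follows essentially the same route as the paper's: slice the product by the time index, use Fact~\ref{Fact_messbar} for measurability, establish the per-step conditional bound via \Prop~\ref{Prop_card} (noting that the selector coordinates $(\phi_t,j)$ with $j\le k_1$ are quarantined from $\cI_t\cup\cJ_t$, so conditioning on $z_t$ leaves the test coordinates independent uniform), and assemble via \Lem~\ref{Lemma_filt}. The only cosmetic difference is that you argue $\phi_t\notin\{i:(i,j)\in\cI_t\}$ from $\pi_{t-1}(\phi_t,\cdot)\equiv -1$ whereas the paper argues it from $\PHI_{\phi_t}$ being all-negative while $\cI_t$ forces a positive literal in $\PHI_i$; these are equivalent.
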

\begin{proof}
Let $\cI_t=\{(i,j):(t,i,j)\in\cI\}$, $\cJ_t=\{(i,j):(t,i,j)\in\cJ\}$, 
	$X_t=\prod_{(i,j)\in\cI_t}\cH_{tij}\prod_{(i,j)\in\cJ_t}\cS_{tij}.$
If $X_t=1$, then $t\leq T$ (as otherwise $\cS_{tij}=0$ by definition and $\cH_{tij}=0$ because $\pi_t=\pi_{t-1}$).
Furthermore, $X_t=1$ implies that
	\begin{equation}\label{eqDangProd1}
	\mbox{$\pi_{t-1}(i,j)=1$ for all $(i,j)\in\cI_t$ and $\pi_{t-1}(i,j)\in\{-1,1\}$ for all $(i,j)\in\cJ_t$.}
	\end{equation}
Thus, let $\Phi$ be a $k$-SAT formula such that $T\brk{\Phi}\geq t$ and $\pi_{t-1}\brk{\Phi}$ satisfies~(\ref{eqDangProd1}).
We claim that
	\begin{equation}\label{eqDangProd2}
	\Erw\brk{X_t|\cF_{t-1}}\bc{\Phi}\leq(n-\theta)^{-|\cI_t|}(1-1/n)^{|\cJ_t|}.
	\end{equation}
To show this, we condition on the event $\PHI\equiv_t\Phi$.
Then at time~$t$ steps {\bf PI1}--{\bf PI2} select a variable $z_t$ from the the all-negative clause $\PHI_{\phi_t}$.
As for each $(i,j)\in\cI_t$ clause $\PHI_i$ contains a positive literal,
we have $\phi_t\not=i$.
Furthermore, 
we may assume that if $(\phi_t,j)\in\cJ_t$ then $j>k_1$,
because otherwise $X_t=\cS_{t\phi_t j}=0$ (cf.~{\bf PI4}).
Hence, due to~(\ref{eqDangProd1}) and \Prop~\ref{Prop_card} in the conditional distribution
$\pr\brk{\cdot|\cF_{t-1}}(\Phi)$ 
the variables $(|\PHI_{ij}|)_{(i,j)\in\cI_t\cup\cJ_t}$ are uniformly distributed over $V\setminus Z_{t-1}$ and mutually independent.
Therefore, the events
	$|\PHI_{ij}|=z_t$ occur independently with probability $1/|V\setminus Z_{t-1}|=1/(n-t+1)$, whence
	$$\Erw\brk{X_t|\cF_{t-1}}\bc{\Phi}\leq(n-t+1)^{-\abs{\cI_t}}(1-1/(n-t+1))^{|\cJ_t|}\leq(n-\theta)^{-|\cI_t|}(1-1/n)^{|\cJ_t|}.$$
This shows~(\ref{eqDangProd2}).
Finally, the assertion follows from \Lem~\ref{Lemma_filt} and~(\ref{eqDangProd2}).
\qed\end{proof}
Armed with \Lem~\ref{Lemma_danger_prod}, we can now
bound the number of indices $i\in\cU_t$ such that $\PHI_i$ has ``few'' positive literals.

\begin{lemma}\label{Lemma_danger_auxb}
With probability $1-o(1/n)$ the following is true for all
$1\leq l<\sqrt{k}$ and all $1\leq t\leq\min\{T,\theta\}$.
Let
	$$\Lambda_l(t)=\omega\bink{k-1}{l-1}\bcfr{t}n^{l-1}(1-t/n)^{k-l}.$$
There are at most
	$(1+\eps/9)\Lambda_l(t)n$ indices $i\in\cU_t$ such that
$\PHI_i$ has precisely $l$ positive literals.
\end{lemma}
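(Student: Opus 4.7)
The plan is to apply the moment method of \Lem~\ref{Lemma_moment} to the indicators $X_i=\vecone\brk{i\in\cU_t\wedge|P_i|=l}$, where $P_i=\cbc{j\in\brk k:\pi_0(i,j)=1}$ and $N_i=\brk k\setminus P_i$ are $\cF_0$-measurable.  The central step is the pointwise bound
\[
X_i \;\leq\; \vecone\brk{|P_i|=l}\sum_{j^*\in P_i}\prod_{j\in P_i\setminus\{j^*\}}\sum_{s=1}^{t}\cH_{sij}\cdot\prod_{j\in N_i}\prod_{s=1}^{t}\cS_{sij},
\]
expressing $X_i$ through the indicators from~(\ref{eqSH}).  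To justify this I would argue that if $i\in\cU_t$ with $|P_i|=l$, then $i\in U_{s'}$ for some smallest $s'\leq t$: the only way $\PHI_i$ can become $Z$-unique is for a positive variable to join $Z$, thereby dropping the count of positives in $V\setminus Z$ down to one.  At that moment $l-1$ of the positive variables already lie in $Z_{s'}$, and because {\bf PI4} reveals no positive literal via the $U$-rule before time $s'$, each such variable $|\PHI_{ij}|$ enters $Z$ at a step $s\leq s'$ at which $\pi_{s-1}(i,j)=1$ and $\pi_s(i,j)=z_s$, i.e.\ $\cH_{sij}=1$.  For the negatives, ``no negative variable of $\PHI_i$ lies in $Z_t$'' amounts to $\cS_{tij}=1$, which by the monotonicity of $\pi$ is bounded by $\prod_{s\leq t}\cS_{sij}$.

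For any set $M\subset\brk m$ with $|M|=\mu=\uppergauss{\ln^2 n}$ I would expand $\prod_{i\in M}X_i$ as a sum over the $l^\mu$ choices of $j_i^*\in P_i$ and the $t^{\mu(l-1)}$ choices of exponents $(s_{ij})_{j\in P_i\setminus\{j_i^*\}}$.  Each resulting term is a product of $\mu(l-1)$ distinct $\cH$-indicators and $\mu t(k-l)$ distinct $\cS$-indicators, so \Lem~\ref{Lemma_danger_prod} bounds its conditional expectation by $(n-\theta)^{-\mu(l-1)}(1-1/n)^{\mu t(k-l)}$.  Averaging over $\cF_0$, where the events $|P_i|=l$ have probability $\bink{k}{l}2^{-k}$ independently across distinct clauses, yields $\Erw\brk{\prod_{i\in M}X_i}\leq\lambda^\mu$ with
\[
\lambda \;=\; l\bink{k}{l}2^{-k}\bcfr{t}{n-\theta}^{l-1}(1-1/n)^{t(k-l)}.
\]

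Using $\theta/n=O(k^{-1}\ln\omega)=o(1)$ together with $(k-l)t^2/n^2=O(\ln^2\omega/k)=o(1)$ for $l<\sqrt k$ and $t\leq\theta$, the expansions $(n-\theta)^{-(l-1)}=n^{-(l-1)}(1+o(1))$ and $(1-1/n)^{t(k-l)}=(1-t/n)^{k-l}(1+o(1))$ deliver $\lambda m=n\Lambda_l(t)(1+o(1))$.  Applying \Lem~\ref{Lemma_moment} with $\delta=\eps/20$ then gives $\pr\brk{\sum_iX_i\geq(1+\eps/9)n\Lambda_l(t)}\leq n^{-10}$ for each $(l,t)$ with $n\Lambda_l(t)$ larger than a polylog threshold; the residual small-$\lambda$ regime is handled by a direct Markov bound on $\Erw\brk{\sum_iX_i}$.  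A union bound over the at most $\sqrt k\cdot\theta=O(n\sqrt k)$ admissible pairs yields the claimed $1-o(1/n)$.  The main obstacle is the indicator bound itself: one must check, simultaneously for $i\in U_t$ and for the case in which $i$ has dropped out of $U$ because its remaining positive variable later joined $Z$, that $l-1$ of the $\cH$-indicators must fire, and tracking the first entrance of $i$ into the family $\{U_{s'}\}_{s'\leq t}$ is what produces this uniformly.
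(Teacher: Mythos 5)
Your approach is essentially the paper's: express membership in $\cU_t$ through the indicators $\cH_{sij}$ and $\cS_{sij}$ of~(\ref{eqSH}), invoke \Lem~\ref{Lemma_danger_prod} to bound the conditional moments, and then apply the moment method of \Lem~\ref{Lemma_moment}. Your explicit pointwise inequality on $X_i$ is a clean reformulation of the paper's union bound over $(\cP,\cT)$, and your $\lambda$ matches the paper's up to the harmless replacement of $(n-t)$ by $(n-\theta)$ and of $(1-1/n)^{(k-l+1)t}$ by $(1-1/n)^{(k-l)t}$; both are valid upper bounds and both give $\lambda m\leq(1+o_k(1))\Lambda_l(t)n$.

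There is however a concrete bug in the definition $P_i=\{j:\pi_0(i,j)=1\}$. For a clause $i\in U_0$ (exactly one positive literal, hence exactly the case $l=1$, which is in the stated range $1\leq l<\sqrt k$) the time-$0$ convention reveals the positive literal, so $\pi_0(i,j^*)=\PHI_{ij^*}\not=1$ and $P_i=\emptyset$. Your indicator $\vecone[|P_i|=1]$ is then identically $0$, so your $X_i$ is not the random variable the lemma is about, and the pointwise inequality $X_i\leq\cdots$ would be false for the correct $X_i$ since the surviving position $j^*$ lands in $N_i$ and drags $\prod_{s\leq t}\cS_{sij^*}$ to $0$. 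The fix is to define $P_i=\{j:\pi_0(i,j)\neq-1\}$ (equivalently, the positive positions), which is still $\cF_0$-measurable; then for $l=1$ the $\cH$-product is empty, $N_i$ is exactly the negative positions, and everything goes through, including the $\cF_0$-probability $\bink{k}{l}2^{-k}$ of the event $|P_i|=l$.

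The other place to be careful is the ``small $\lambda$'' regime you flag. \Lem~\ref{Lemma_moment} needs $\lambda\geq\delta$ for a fixed $\delta>0$ and only bites when $\lambda m\gtrsim\ln^2n$; for $l\geq2$ and $t=O(1)$ one has $\Lambda_l(t)n=O(1)$, and a direct Markov bound on $\Erw\brk{\sum_iX_i}$ gives only a constant failure probability, not $o(1/n)$. (The paper's own proof invokes \Lem~\ref{Lemma_moment} here without addressing this constraint, so this is an inherited subtlety; what is actually used downstream in \Lem~\ref{Lemma_danger} is only the sum $\sum_{l<\sqrt k}\cU_{tl}$, which one can control by allowing an additive $\ln^{O(1)}n$ slack per $l$ in the threshold.) You should replace ``direct Markov bound'' with that kind of slack argument rather than leave it as stated.
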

\begin{proof}
Let $\cM\subset\brk{m}$ be a set of size $\mu=\uppergauss{\ln^2n}$ and 
let $P_{i}\subset\brk{k}$ be a set of size $l-1$ for each $i\in\cM$.
Let $\cP=(P_{i})_{i\in\cM}$ be the family of all sets $P_i$.
Furthermore, let $t_i:P_i\rightarrow\brk{t}$ for all $i\in\cM$, and let $\cT=(t_i)_{i\in\cM}$
comprise all maps $t_i$.
Let $\cE_\cM(\cP,\cT)$ be the event that the following statements are true:
\begin{enumerate}
\item[a.] $\PHI_i$ has exactly $l$ positive literals for all $i\in\cM$.
\item[b.] $\PHI_{ij}=z_{t_i(j)}$ %is a positive literal and $\Phi_{ij}\in Z_{t_i(j)}\setminus Z_{t_i(j)-1}$
			for all $i\in\cM$ and $j\in P_{i}$.
\item[c.] $t\leq T$ and no variable from $Z_t$ occurs negatively in $\PHI_i$.
\end{enumerate}
Moreover, let
	\begin{eqnarray*}
	\cI&=&\cI_\cM(\cP,\cT)=\cbc{(s,i,j):i\in\cM,j\in P_i,s=t_i(j)},\\
	\cJ&=&\cJ_\cM(\cP,\cT)=\cbc{(s,i,j):i\in\cM,j\in\brk{k}\setminus P_i}
	\end{eqnarray*}
Let $Y_i=1$ if clause $\PHI_i$ has exactly $l$ positive literals, including the literals $\PHI_{ij}$ for $j\in P_i$ $(i\in\cM)$.
Then $\pr\brk{Y_i=1}=(k-l+1)2^{-k}$ for each $i\in\cM$.
Moreover, the events $Y_i=1$ are mutually independent and $\cF_0$-measurable.
Therefore, by \Lem~\ref{Lemma_danger_prod}
	\begin{eqnarray}\nonumber
	\pr\brk{\cE_\cM(\cP,\cT)}&\leq&
		%\bcfr{k-l+1}{2^{k}}^{\mu}
			\Erw\brk{\prod_{i\in\cM}Y_i}\cdot\Erw\brk{\prod_{(t,i,j)\in\cI}\cH_{tij}\cdot\prod_{(t,i,j)\in\cJ}\cS_{tij}|\cF_0}\\
		&\leq&\brk{\frac{k-l+1}{2^{k}}\cdot\bc{n-t}^{1-l}\bc{1-1/n}^{(k-l+1)t}}^\mu.
		\label{eqdanger1a}
	\end{eqnarray}
Let $\cE_\cM$ be the event that $t\leq T$ and $\PHI_i$ has exactly $l$ positive literals and $i\in\cU_t$ for all $i\in\cM$.
If $\cE_\cM$ occurs, then there exist $\cP,\cT$ such that $\cE_\cM(\cP,\cT)$ occurs.
Furthermore, for each $i\in\cM$ there are $\bink{k}{l-1}$ ways to choose a set $P_i$ and then $t^{l-1}$ ways to choose
the map $t_i$.
Therefore, the union bound and~(\ref{eqdanger1a}) yield
	\begin{eqnarray*}
	\pr\brk{\cE_\cM}&\leq&\sum_{\cP,\cT}\pr\brk{\cE_\cM(\cP,\cT)}
		\leq\lambda^\mu
			\quad\mbox{ where }\\
	\lambda&=&\bink{k}{l-1}t^{l-1}\times\frac{k-l+1}{2^{k}}\cdot\bc{n-t}^{1-l}\bc{1-1/n}^{(k-l+1)t}.
	\end{eqnarray*}
Hence, by \Lem~\ref{Lemma_moment} with probability $1-o(1/n)$ 
there are at most $(1+o(1))\lambda m$ indices $i\in\brk{m}$ such
that $\PHI_i$ has precisely $l$ positive literals and $i\in\cU_t$.
Thus, the remaining task is to show that
	\begin{equation}\label{eqDangerBZiel}
	\lambda m\leq(1+\eps/10)\Lambda_ln.
	\end{equation}
To show~(\ref{eqDangerBZiel}), we estimate
	\begin{eqnarray}\nonumber
	\lambda&\leq& k2^{-k}\cdot\bink{k-1}{l-1}\bcfr{t}{n-t}^{l-1}(1-1/n)^{t(k-1-(l-1))}\\
		&\leq&k2^{-k}\cdot\bink{k-1}{l-1}\bcfr{t}{n}^{l-1}(1-t/n)^{k-1-(l-1)}\eta,\mbox{ where }
	\eta=\bcfr{n}{n-t}^{l-1}\hspace{-3mm}\cdot\bcfr{(1-1/n)^t}{1-t/n}^{k-l}\hspace{-3mm}.
		\label{eqDangerBMittel}
	\end{eqnarray}
We can bound $\eta$ as follows:
	\begin{eqnarray*}
		\eta&\leq&\bc{1+t/(n-t)}^l\bcfr{\exp(-t/n)}{\exp(-t/n-(t/n)^2)}^{k-l}\leq\bc{1+2t/n}^l\exp(k(t/n)^2)\\
		&\leq&\exp(2l\theta/n+k(\theta/n)^2)
			\leq\exp(8lk^{-1}\ln\omega+16k^{-1}\ln^2\omega).
			%\leq1+\eps/9,
	\end{eqnarray*}
Since $l\leq\sqrt{k}$ and $\omega\leq\ln k$, the last expression is less than $1+\eps/10$ for sufficiently large $k$.
Hence, $\eta\leq1+\eps/10$, and thus~(\ref{eqDangerBZiel}) follows from~(\ref{eqDangerBMittel}).
\qed\end{proof}
The following lemma deals with $i\in\cU_t$ such that $\PHI_i$ contains ``a lot'' of positive literals.

\begin{lemma}\label{Lemma_danger_aux}
\Whp\ the following is true for all $l\geq\ln k$.
There are at most $n\exp(-l)$
indices $i\in\brk{m}$ such that $\PHI_i$ has exactly $l$ positive literals
among which at least $l-1$ are in $Z_\theta$.
\end{lemma}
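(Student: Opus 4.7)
\emph{Proof plan.} The argument mirrors the proof of Lemma~\ref{Lemma_danger_auxb}: we carry out a first-moment computation on a product of indicators and apply Lemma~\ref{Lemma_moment}, then take a union bound over the (at most $k$) values of $l$. Fix $l\geq\ln k$. For a set $\cM\subset\brk{m}$ of size $\mu=\lceil\ln^2 n\rceil$, a family $\cP=(P_i)_{i\in\cM}$ with $P_i\subset\brk{k}$ of size $l-1$, and a family $\cT=(t_i)_{i\in\cM}$ of maps $t_i:P_i\to\brk\theta$, let $\cE_\cM(\cP,\cT)$ be the event that for every $i\in\cM$: (a) $\PHI_i$ has exactly $l$ positive literals, among them the ones at positions in $P_i$, and (b) $\PHI_{ij}=z_{t_i(j)}$ for all $j\in P_i$. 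The event that $\PHI_i$ has exactly $l$ positive literals, at least $l-1$ of which lie in $Z_\theta$, is contained in $\bigcup_{\cP,\cT}\cE_\cM(\cP,\cT)$, since a subset of $l-1$ positive literals in $Z_\theta$ can always be written as $\{z_{t_i(j)}:j\in P_i\}$ for some choice of $P_i$ and $t_i$.

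Since the sign pattern in (a) is $\cF_0$-measurable and has probability $(k-l+1)2^{-k}$ per clause (one further positive literal and $k-l$ negative literals are distributed over the remaining $k-l+1$ positions), while constraint (b) is bounded using Lemma~\ref{Lemma_danger_prod} with $\cI=\{(t_i(j),i,j):i\in\cM,j\in P_i\}$ and $\cJ=\emptyset$, we obtain
\[
\pr\brk{\cE_\cM(\cP,\cT)}\leq\brk{\frac{k-l+1}{2^k}(n-\theta)^{1-l}}^\mu.
\]
Summing over the $\binom{k}{l-1}^\mu\theta^{(l-1)\mu}$ choices of $(\cP,\cT)$ and invoking Lemma~\ref{Lemma_moment} shows that with probability at least $1-n^{-10}$ the number of indices $i\in\brk m$ satisfying the assertion is at most $2\lambda m$, where
\[
\lambda=\bink{k}{l-1}\theta^{l-1}\cdot\frac{k-l+1}{2^k}(n-\theta)^{1-l}.
\]

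It remains to show that $2\lambda m\leq n\exp(-l)$ for every $l\geq\ln k$. Plugging in $m\leq 2^kk^{-1}\ln k$, $\theta\leq 4nk^{-1}\ln\omega$, and $\binom{k}{l-1}\leq 2(ek/l)^{l-1}$, and using $n-\theta=(1-o(1))n$, a short calculation yields
\[
\lambda m\leq 2e\cdot n\ln k\cdot\bcfr{4e\ln\omega}{l}^{l-1}.
\]
For $l\geq\ln k$ and $k$ large enough we have $4e\ln\omega/l\leq 4e\ln\ln k/\ln k\leq e^{-3}$ (using $\omega\leq\ln k$), hence $\lambda m\leq 2e^4 n\ln k\cdot e^{-3l}\leq\tfrac12 n e^{-l}$ as required, since $l\geq\ln k$ forces $e^{2l}\geq k^2\gg 4e^4\ln k$. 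A final union bound over the at most $k$ relevant values of $l$ absorbs the $n^{-10}$ failure probabilities and completes the argument.

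The only non-routine point is the correct bookkeeping to check that $\cE_\cM(\cP,\cT)$ faithfully captures ``at least $l-1$ positive literals in $Z_\theta$'' without losing any structure needed for Lemma~\ref{Lemma_danger_prod}; this works cleanly because the sign pattern is $\cF_0$-measurable and therefore decouples from the literal-revealing process that generates the sequence $(z_t)_{t\geq 1}$.
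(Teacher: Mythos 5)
Your proof is essentially the paper's own argument: you use the same set-up with $\cM$, $\cP=(P_i)$, $\cT=(t_i)$, decompose the sign-pattern event (which is $\cF_0$-measurable and has probability $(k-l+1)2^{-k}$) from the ``$\PHI_{ij}=z_{t_i(j)}$'' events, bound the latter via Lemma~\ref{Lemma_danger_prod}, invoke Lemma~\ref{Lemma_moment}, and finish with the same estimate $\lambda m\leq\omega n\cdot(O(\ln\omega)/l)^{l-1}\leq n\exp(-l)/2$ for $l\geq\ln k$. The only cosmetic difference is that you carry slightly different constants and state the union bound over $l$ explicitly, which the paper leaves implicit.
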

\begin{proof}
Let $\cM\subset\brk{m}$ be a set of size $\mu=\uppergauss{\ln^2n}$
and let $P_{i}\subset\brk{k}$ be a set of size $l-1$ for each $i\in\cM$.
Furthermore, let $t_i:P_i\rightarrow\brk{\theta}$ for all $i\in\cM$, and set $\cT=(t_i)_{i\in\cM}$.
Let $\cE_\cM(\cP,\cT)$ be the event that the following two statements are true for all $i\in\cM$:
\begin{enumerate}
\item[a.] $\PHI_i$ has exactly $l$ positive literals.
\item[b.] For all $j\in P_{i}$ we have $\PHI_{ij}=z_{t_i(j)}$.
\end{enumerate}
Let $\cE_\cM$ be the event that for all $i\in\cM$ clause $\PHI_i$ has exactly $l$ positive literals
among which $l-1$ are in $Z_\theta$.
If $\cE_\cM$ occurs, then there are $\cP,\cT$ such that the event $\cE_\cM(\cP,\cT)$ occurs.

For $i\in\cM$ we let $Y_i=1$ if clause $\PHI_i$ has exactly $l$ positive literals, including the literals $\PHI_{ij}$ for $j\in P_i$.
Set
	$\cI=\cbc{(s,i,j):i\in\cM,j\in P_i,s=t_i(j)}$.
If $\cE_\cM(\cP,\cT)$ occurs, then
	$\prod_{(s,i,j)\in\cI}\cH_{sij}\cdot\prod_{i\in\cM}Y_i=1.$
Bounding $\Erw\brk{\prod_{i\in\cM}Y_i}$ as in the proof of \Lem~\ref{Lemma_danger_auxb}
and applying \Lem~\ref{Lemma_danger_prod}, we obtain
	\begin{eqnarray}\nonumber
	\pr\brk{\cE_\cM(\cP,\cT)}&\leq&
		\Erw\brk{\prod_{i\in\cM}Y_i}\cdot\Erw\brk{\prod_{(s,i,j)\in\cI}\cH_{sij}|\cF_0}
		\leq\brk{\frac{k-l+1}{2^{k}}\cdot(n-\theta)^{1-l}}^{\mu}.\label{eqDangAux2}
	\end{eqnarray}
Hence, by the union bound
	\begin{eqnarray}\nonumber
	\pr\brk{\cE_\cM}
		&\leq&\pr\brk{\exists\cP,\cT:\cE_\cM(\cP,\cT)\mbox{ occurs}}
			\leq\sum_{\cP,\cT}\pr\brk{\cE_\cM(\cP,\cT)}
	\leq	\lambda^\mu,\quad\mbox{where }\nonumber\\
			\lambda&=&\bink{k}{l-1}\theta^{l-1}\times\frac{k-l+1}{2^{k}}\cdot(n-\theta)^{1-l}.
	\end{eqnarray}
\Lem~\ref{Lemma_moment} implies that \whp\ there are at most
$2\lambda m$ indices $i\in\brk{m}$ such that $\PHI_i$ has exactly $l$ positive literals of which $l-1$ lie in $Z_\theta$.
Thus, the estimate
	\begin{eqnarray*}
	2\lambda m&\leq&\frac{2^{k+1}\omega n}k\times\bink{k}{l-1}\cdot\frac{k-l+1}{2^k}\cdot\bcfr{\theta}{n-\theta}^{l-1}\\
		&\leq&2\omega n\cdot\bcfr{\eul k\theta}{(l-1)(n-\theta)}^{l-1}\leq2\omega n\bcfr{12\ln\omega}{l}^{l-1}
				\quad\mbox{[as $\theta=4nk^{-1}\ln\omega$]}\\
		&\leq&n\exp(-l)\qquad\qquad\qquad\qquad\qquad\qquad\qquad\qquad\qquad\mbox{[because $l\geq\ln k$]}
	\end{eqnarray*}
completes the proof.
\qed\end{proof}

\noindent\emph{Proof of \Lem~\ref{Lemma_danger}.}
Since $T\leq\theta$ \whp\ by \Cor~\ref{Cor_T},
it suffices to show that \whp\ for all $0\leq t\leq\min\{T,\theta\}$ the bound
$|\cU_t|\leq(1+\eps/3)\omega n$ holds.
Let $\cU_{tl}$ be the number of indices $i\in\cU_t$ such that $\PHI_i$ has precisely $l$ positive literals.
Then by \Lem s~\ref{Lemma_danger_aux} and~\ref{Lemma_danger_auxb} \whp\ for all $t\leq\min\{T,\theta\}$
and all $1\leq l\leq k$ simultaneously
	$$\cU_{tl}\leq\begin{cases}
		n\exp(-k)&\mbox{ if }l\geq\sqrt{k},\\
		(1+\eps/9)\Lambda_l(t)&\mbox{ otherwise}.
		\end{cases}$$
Therefore, \whp\
	\begin{eqnarray*}
	\max_{0\leq t\leq\min\{T,\theta\}}|\cU_t|&\leq&\max_{0\leq t\leq\min\{T,\theta\}}\sum_{l=1}^k\cU_{tl}
		\leq nk\exp(-k)+\max_{0\leq t\leq\min\{T,\theta\}}\sum_{1\leq l\leq\sqrt{k}}^k(1+\eps/9)\Lambda_l(t)\\
		&\leq&n+(1+\eps/9)\omega n\leq(1+\eps/3)\omega n,
	\end{eqnarray*}
as desired.
\qed

\subsection{Proof of \Cor~\ref{Cor_nonSupporting}}\label{Sec_nonSupporting}

Define a map $\psi_t:\cU_t\rightarrow V$ as follows.
For $i\in\cU_t$ let
$s$ be the least index such $i\in\cU_s$;
if there is $j$ such that $\PHI_{ij}\in V\setminus Z_s$, let $\psi_t(i)=\PHI_{ij}$, and
otherwise let $\psi_t(i)=z_s$.
Thus, if $i\in U_s$ then $\psi_s(i)$ is the unique positive literal of $\PHI_i$ that does
not belong to $Z_s$.
The following lemma shows that the (random) map $\psi_t$ is not too far from being ``uniformly distributed''.

\begin{lemma}\label{Lemma_BallsUniform}
Let $t\geq0$, $\hat\cU_t\subset\brk{m}$, and $\hat\psi_t:\hat\cU_t\rightarrow V$.
Then
	$\pr\brk{\psi_t=\hat\psi_t|\cU_t=\hat\cU_t}\leq(n-t)^{-|\hat\cU_t|}.$
\end{lemma}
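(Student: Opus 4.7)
The plan is to exploit \Prop~\ref{Prop_card} through a sequential analysis of the process {\bf PI1}--{\bf PI4}. Both $\cU_t$ and $\psi_t$ are $\cF_t$-measurable (by Fact~\ref{Fact_messbar} together with the construction), so the event $\cbc{\cU_t = \hat\cU_t}$ decomposes into $\equiv_t$-equivalence classes, each of which pins down a single value of $\psi_t$. It therefore suffices to show that the $\cF_t$-mass of classes consistent with both $\cU_t = \hat\cU_t$ and $\psi_t = \hat\psi_t$ is at most $(n-t)^{-|\hat\cU_t|}$ times the mass of classes consistent with $\cU_t = \hat\cU_t$ alone.

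For each $i \in \hat\cU_t$, let $s_i$ denote the first step at which $i \in \cU$. If at time $s_i$ the clause $\PHI_i$ still has a unique positive literal outside $Z_{s_i}$, then $\psi_t(i) = \PHI_{i j_i^*}$ for a position $(i, j_i^*)$ satisfying $\pi_{s_i-1}(i, j_i^*) = 1$ that is first revealed at step $s_i$ via {\bf PI4}. Otherwise all positive literals of $\PHI_i$ lie in $Z_{s_i}$ and $\psi_t(i) = z_{s_i}$, in which case pinning $\psi_{s_i}(i)$ to the prescribed value forces every still-unrevealed positive position of $\PHI_i$ to coincide with $z_{s_i}$ and additionally forces $z_{s_i}$ itself to equal $\hat\psi_t(i)$. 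In either case, the requirement $\psi_{s_i}(i) = \hat\psi_t(i)$ amounts to fixing the value of at least one coordinate that is unrevealed at time $s_i - 1$. By \Prop~\ref{Prop_card} applied at filtration level $\cF_{s_i-1}$, those coordinates are iid uniform over $V \setminus Z_{s_i-1}$, which has size $n - s_i + 1 \ge n - t + 1 > n - t$, so each such pin contributes a conditional probability factor of at most $1/(n-t)$.

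The argument would be executed by summing over all entry-time profiles $\vec s = (s_i)_{i \in \hat\cU_t}$ and, for each fixed profile, processing the times $s = 1, \ldots, t$ in order while conditioning on $\cF_{s-1}$. The coordinates pinned for distinct $i$ lie in distinct clauses (or, in the $z_s$-subcase, in the single clause $\PHI_{\phi_s}$), so \Prop~\ref{Prop_card} ensures the pinning events are conditionally independent; chaining the per-step bounds across $s$ via the tower property (essentially \Lem~\ref{Lemma_filt}) would yield
\[
\pr\brk{\cU_t = \hat\cU_t,\,\psi_t = \hat\psi_t,\,(s_i)_{i \in \hat\cU_t} = \vec s}\le(n-t)^{-|\hat\cU_t|}\cdot\pr\brk{\cU_t = \hat\cU_t,\,(s_i)_{i \in \hat\cU_t} = \vec s}.
\]
Summing over $\vec s$ and dividing by $\pr\brk{\cU_t = \hat\cU_t}$ then delivers the claim.

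The chief obstacle is the bookkeeping: the entry times $s_i$ are random, the event $\cbc{\cU_t = \hat\cU_t}$ couples the clauses through the shared history, and the subcase $\psi_t(i) = z_{s_i}$ requires a dedicated verification (via \Prop~\ref{Prop_card} applied to the unrevealed literals of $\PHI_{\phi_{s_i}}$ together with the selection rule in {\bf PI2}) that pinning $z_{s_i} = \hat\psi_t(i)$ also costs at most $1/(n-t)$. Once this verification is in hand, the inequality above follows cleanly from the conditional-independence structure supplied by \Prop~\ref{Prop_card}.
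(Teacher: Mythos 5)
Your high-level plan has the right shape and is essentially the paper's: fix an entry-time profile (the paper conditions, somewhat more finely, on the entire $\gamma$-pattern $\gamma_0,\dots,\gamma_t$, where $\gamma_s$ records which positions are still unrevealed at time $s$), obtain a per-step ratio of roughly $1/(n-s)$ from \Prop~\ref{Prop_card}, and chain. But the step you summarize as ``chaining the per-step bounds across $s$ via the tower property (essentially \Lem~\ref{Lemma_filt})'' is precisely where the real work lies, and your proposal does not supply it. The probability you must control at time $s$ is that of the time-$s$ pinning event \emph{conditioned both on $\cF_{s-1}$ and on a future-looking event} --- namely $\cbc{\cU_t=\hat\cU_t,\;(s_i)=\vec s}$, or equivalently the full $\gamma$-pattern $\Gamma_t$. \Prop~\ref{Prop_card} describes the conditional law of the unrevealed literals given $\cF_{s-1}$ only; it does not by itself say that this law is unchanged after further conditioning on which positions get revealed at times $s+1,\dots,t$. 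The paper devotes a dedicated argument to exactly this point: it shows in~(\ref{eqBallUniform4}), via~(\ref{eqBallUniform6}), that the time-$s$ pinning event, given $\tau_{s-1}\cap\Gamma_s$, is independent of $\Gamma_t$, because the later $\gamma$-events depend only on whether the still-unrevealed literals hit $z_u$ for various $u>s$, which is insensitive to the pinned values. Without this (or an equivalent) your displayed inequality does not follow from \Prop~\ref{Prop_card} and \Lem~\ref{Lemma_filt} alone, so the ``follows cleanly'' claim in your last paragraph is the gap.

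A secondary, smaller point: the per-step factor is not literally ``pin one unrevealed coordinate at cost $1/(n-s_i+1)$.'' One must condition on clause $i$ actually entering $\cU$ at time $s_i$, which requires at least $|J_i|-1$ of its still-open positive positions to equal $z_{s_i}$; the right quantity is a ratio $p_i/q_i$ as in~(\ref{eqBallUniform5}). In particular, the $\psi_{s_i}(i)=z_{s_i}$ subcase is not handled by ``pinning $z_{s_i}=\hat\psi_t(i)$'' --- the conditional law of $z_{s_i}$ is not uniform --- but by integrating over $z_{s_i}$ and observing that $q_i$ does not depend on its realized value while $p_i$ does, giving $p_i/q_i\le1/(n-s_i)$ in every case.
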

\begin{proof}
Set $Z_{-1}=\emptyset$.
Moreover, define random variables
	$$\gamma_t(i,j)=\left\{\begin{array}{cl}
		\pi_t(i,j)&\mbox{ if }\pi_t(i,j)\in\{-1,1\}\\
		0&\mbox{ otherwise}
		\end{array}\right.\qquad\mbox{for }(i,j)\in\brk m\times\brk k.$$
Thus, $\gamma_t$ is obtained by ``forgetting'' the literals $\pi_t(i,j)\in V\cup\bar V$
that the process {\bf PI1}--{\bf PI4} has revealed up to time $t$.
Observe that for any $i\in\brk m$
	\begin{eqnarray}\label{eqGammaU}
	i\in\cU_t&\Leftrightarrow&\max_{j\in\brk k}\gamma_0(i,j)\geq0\wedge\bc{\forall j\in\brk k:
		\gamma_t(i,j)=\min\{\gamma_0(i,j),0\}}.
	\end{eqnarray}

Fix a set $\hat\cU_t\subset\brk{m}$,
let $\Phi$ be any formula such that $\cU_t\brk\Phi=\hat \cU_t$,
and let $\hat\gamma_t=\gamma_t\brk{\Phi}$.
For $s\leq t$ let
$\Gamma_s$ be the event that $\gamma_u=\hat\gamma_u$ for all $u\leq s$.
The goal is to prove that
	\begin{equation}\label{eqBallUniform1}
	\pr\brk{\psi_t=\hat\psi_t|\Gamma_t}\leq(n-t)^{-|\hat \cU_t|}.
	\end{equation}
Let $\tau:\hat \cU_t\rightarrow\brk{0,t}$ assign to each $i\in\hat\cU_t$ the
least $s$ such that $i\in\hat\cU_s$. %, resp.\ $\infty$ if $i\not\in U_0\cup\cdots\cup U_T$.
We claim that
	\begin{equation}\label{eqBallUniform2}
	\pr\brk{\forall i\in\hat\cU_t:\psi_{t}(i)=\hat\psi_t(i)|\Gamma_t}\leq\prod_{i\in\hat \cU_t}(n-\tau(i))^{-1}.
	\end{equation}
Since %$\psi_t(i)=\psi_{\tau(i)}(t)$ and
$\tau(i)\leq t$ for all $i\in\hat \cU_t$, (\ref{eqBallUniform2}) implies~(\ref{eqBallUniform1}).

Let $\tau_s$ be the event that $\psi_u(i)=\hat\psi_t(i)$ for all $0\leq u\leq s$ and all $i\in \tau^{-1}(u)$,
and let $\tau_{-1}=\Omega_k(n,m)$.
In order to prove~(\ref{eqBallUniform2}), we will show that for all $0\leq s\leq t$
	\begin{eqnarray}\label{eqBallUniform3}
	\pr\brk{\tau_s|\tau_{s-1}\cap\Gamma_s}&\leq&(n-s)^ {-|\tau^{-1}(s)|}\qquad\qquad\qquad\mbox{and}\\
	\pr\brk{\tau_s|\tau_{s-1}\cap\Gamma_s}&=&\pr\brk{\tau_s|\tau_{s-1}\cap\Gamma_t}.
		\label{eqBallUniform4}
	\end{eqnarray}
Combining~(\ref{eqBallUniform3}) and~(\ref{eqBallUniform4}) yields
	\begin{eqnarray*}
	\pr\brk{\forall i\in\hat \cU_t:\psi_{t}(i)=\hat\psi_t(i)|\Gamma_t}
		&=&\pr\brk{\tau_t|\Gamma_t}
		=\prod_{0\leq s\leq t}\pr\brk{\tau_s|\tau_{s-1}\cap\Gamma_t}\\
		&=&
			\prod_{0\leq s\leq t}\pr\brk{\tau_s|\tau_{s-1}\cap\Gamma_s}\leq\prod_{0\leq s\leq t}(n-s)^{-|\tau^{-1}(s)|},
	\end{eqnarray*}
which shows~(\ref{eqBallUniform2}).
Thus, the remaining task is to establish~(\ref{eqBallUniform3}) and~(\ref{eqBallUniform4}).

To prove~(\ref{eqBallUniform3}) it suffices to show that
	\begin{equation}\label{eqBallUniform5}
	\frac{\pr\brk{\tau_s\cap\Gamma_s|\cF_{s-1}}(\varphi)}{\pr\brk{\tau_{s-1}\cap\Gamma_s|\cF_{s-1}}(\varphi)}
		\leq(n-s)^{-|\tau^{-1}(s)|}\qquad\mbox{ for all }\varphi\in\tau_{s-1}\cap\Gamma_{s}.
	\end{equation}
Note that the l.h.s.\ is just the conditional probability of $\tau_s$ given $\tau_{s-1}\cap\Gamma_s$
with respect to the probability measure $\pr\brk{\cdot|\cF_{s-1}}(\varphi)$.
Thus, let us condition on the event $\PHI\equiv_{s-1}\varphi\in\tau_{s-1}\cap\Gamma_s$.
Then $\PHI\in\Gamma_s$, 
and therefore $\gamma_0=\hat\gamma_0$ and $\gamma_s=\hat\gamma_s$.
Hence, (\ref{eqGammaU}) entails 
$\cU_s=\cU_s\brk\varphi=\cU_s\brk\Phi$, and thus $\tau^{-1}(s)\subset\cU_s$.
Let $i\in\tau^{-1}(s)$, and let $J_i$ be the set of indices $j\in\brk k$ such
that $\gamma_{s-1}(i,j)=1$.
Recall that $\psi_s(i)$ is defined as follows:
	if $\PHI_{ij}=z_s$ for all $j\in J_i$, then $\psi_s(i)=z_s$;
	otherwise $\psi_s(i)=\PHI_{ij}$ for the (unique) $j\in J_i$ such that $\PHI_{ij}\not=z_s$.
By \Prop~\ref{Prop_card} in the measure $\pr\brk{\cdot|\cF_{s-1}}(\varphi)$
the variables $(\PHI_{i j})_{i\in \tau^{-1}(s),j\in J_i}$ are independently uniformly distributed over $V\setminus Z_{s-1}$
(because $\pi_{s-1}(i,j)=\gamma_{s-1}(i,j)=1$).
Hence, the events $\psi_s(i)=\hat\psi_t(i)$ occur independently for all $i\in\tau^{-1}(s)$.
Thus, letting
	\begin{eqnarray*}
	p_i&=&\pr\brk{\psi_s(i)=\psi_t(i)\wedge\forall j\in J_i:\gamma_s(i,j)=0|\cF_{s-1}}(\varphi),\\
	q_i&=&\pr\brk{\forall j\in J_i:\gamma_s(i,j)=0|\cF_{s-1}}(\varphi)
	\end{eqnarray*}
for $i\in\tau^{-1}(s)$, we have
	\begin{equation}\label{eqpiqi}
	\frac{\pr\brk{\tau_s\cap\Gamma_s|\cF_{s-1}}(\varphi)}{\pr\brk{\tau_{s-1}\cap\Gamma_s|\cF_{s-1}}(\varphi)}
		=\prod_{i\in\tau^{-1}(s)}\frac{p_i}{q_i}.
	\end{equation}
Observe that the event $\forall j\in J_i:\gamma_s(i,j)=0$ occurs
iff $\PHI_{ij}=z_s$ for at least $|J_i|-1$ elements $j\in J_i$
(cf.~{\bf PI4}).
Therefore,
	\begin{eqnarray}\nonumber
	q_i&=&|J_i|\cdot|V\setminus Z_{s-1}|^{-(|J_i|-1)}(1-|V\setminus Z_{s-1}|^{-1})+|V\setminus Z_{s-1}|^{-|J_i|}
	\end{eqnarray}
To bound $p_i$ for $i\in\tau^{-1}(s)$ we consider three cases.
\begin{description}
\item[Case 1: $\hat\psi_t(i)\in V\setminus Z_{s-1}$.]
	As $\PHI_{ij}\in V\setminus Z_{s-1}$ for all $j\in J_i$ the event $\psi_s(i)=\hat\psi_t(i)$ has probability $0$.
\item[Case 2: $\hat\psi_t(i)=z_s$.]
	The event $\psi_s(i)=\hat\psi_t(i)$ occurs iff $\PHI_{ij}=z_s$ for all $j\in J_i$,
		which happens with probability $|V\setminus Z_{s-1}|^{-|J_i|}$ in the measure $\pr\brk{\cdot|\cF_{s-1}}(\varphi)$.
		Hence, $p_i=(n-s+1)^{-|J_i|}$.
\item[Case 3: $\hat\psi_t(i)\in V\setminus Z_s$.]
	If $\psi_s(i)=\hat\psi_t(i)$, then there is $j\in J_i$ such that $\PHI_{ij}=\hat\psi_t(i)$
		and $\PHI_{ij'}=z_s$ for all $j'\in J_s\setminus\cbc j$.
	Hence,
		$p_i=|J_i|\cdot|V\setminus Z_{s-1}|^{-|J_i|}=|J_i|(n-s+1)^{-|J_i|}$.
\end{description}
In all three cases we have
	\begin{eqnarray*}
	\frac{q_i}{p_i}&\geq&\frac{|J_i|(n-s+1)^{1-|J_i|}(1-1/(n-s+1))}{|J_i|(n-s+1)^{-|J_i|}}=n-s.
	\end{eqnarray*}
Thus, (\ref{eqBallUniform5}) follows from~(\ref{eqpiqi}).

In order to prove~(\ref{eqBallUniform4}) we will show that
	\begin{equation}		\label{eqBallUniform6}
	\pr\brk{\Gamma_a|\tau_b\cap\Gamma_c}=\pr\brk{\Gamma_a|\Gamma_c}
	\end{equation}
for any $0\leq b\leq c<a$.
This implies~(\ref{eqBallUniform4}) as follows:
	\begin{eqnarray*}
	\pr\brk{\tau_s|\tau_{s-1}\cap\Gamma_t}&=&
		\frac{\pr\brk{\tau_s\cap\Gamma_t}}{\pr\brk{\tau_{s-1}\cap\Gamma_t}}
		=\frac{\pr\brk{\Gamma_t|\tau_s\cap\Gamma_s}\pr\brk{\tau_s\cap\Gamma_s}}
				{\pr\brk{\Gamma_t|\tau_{s-1}\cap\Gamma_s}\pr\brk{\tau_{s-1}\cap\Gamma_s}}\\
			&\stacksign{(\ref{eqBallUniform6})}{=}&
				\frac{\pr\brk{\tau_s\cap\Gamma_s}}{\pr\brk{\tau_{s-1}\cap\Gamma_s}}=\pr\brk{\tau_s|\tau_{s-1}\cap\Gamma_s}.
	\end{eqnarray*}
To show~(\ref{eqBallUniform6}) it suffices to consider the case $a=c+1$, because for $a>c+1$ we have
	\begin{eqnarray*}
	\pr\brk{\Gamma_a|\tau_b\cap\Gamma_c}&=&
		\pr\brk{\Gamma_a|\tau_b\cap\Gamma_{c+1}}\pr\brk{\tau_b\cap\Gamma_{c+1}|\tau_b\cap\Gamma_c}\\
		&=&\pr\brk{\Gamma_a|\tau_b\cap\Gamma_{c+1}}\pr\brk{\Gamma_{c+1}|\tau_b\cap\Gamma_c}.
	\end{eqnarray*}
Thus, suppose that $a=c+1$.
At time $a=c+1$ {\bf PI1} selects an index $\phi_a\in\brk{m}$.
This is the least index $i$ such that $\gamma_c(i,j)=-1$ for all $j$;
thus, $\phi_a$ is determined once we condition on $\Gamma_c$.
Then, {\bf PI2} selects a variable $z_a=|\PHI_{\phi_a j_a}|$ with $j_a\leq k_1$.
Now, $\gamma_a$ is obtained from $\gamma_c$ by setting the entries for some $(i,j)$
such that $\gamma_c(i,j)\in\{-1,1\}$ to $0$ (cf.~{\bf PI4}).
More precisely, we have $\gamma_a(\phi_a,j)=0$ for all $j\leq k_1$.
Furthermore, for $i\in\brk{m}\setminus\{\phi_a\}$ let $\cJ_i$ be the set of all $j\in\brk{k}$
such that $\pi_a(i,j)=\gamma_a(i,j)\in\{-1,1\}$, and for $i=\phi_a$ let 
$\cJ_i$ be the set of all $k_1<j\leq k$ such that $\pi_a(i,j)=\gamma_a(i,j)\in\{-1,1\}$.
Then for any $i\in\brk m$ and any $j\in\cJ_i$
the event $\gamma_c(i,j)=0$
only depends on the events $|\PHI_{ij'}|=z_a$ for $j'\in\cJ_i$.
By \Prop~\ref{Prop_card} the variables $(|\PHI_{ij'}|)_{i\in\brk m,j\in\cJ_i}$
are independently uniformly distributed over $V\setminus Z_c$.
Therefore, the events $|\PHI_{ij'}|=z_a$ for $j'\in\cJ_i$ are
independent of the choice of $z_a$ and of the event $\tau_b$.
\qed\end{proof}

\noindent
\emph{Proof of \Cor~\ref{Cor_nonSupporting}.}
Let $\mu\leq(1+\eps/3)\omega n$ be a positive integer and let
$\hat \cU_t\subset\brk{m}$ be a set of size $\mu$.
Suppose that $t\leq\theta$.
Let $\nu=nk^{-\eps/2}$, and
let $B$ be the set of all maps $\psi:\hat \cU_t\rightarrow\brk{n}$
such that there are less than $\nu+t$ numbers $x\in\brk{n}$ such that $\psi^{-1}(x)=\emptyset$.
Furthermore, let $\cB_t$ be the event that there are less than $\nu$ variables
$x\in V\setminus Z_t$ such that $\cU_t(x)=0$.
Since $|Z_t|=t$, we have
	\begin{eqnarray}\nonumber
	\pr\brk{\cB_t|\cU_t=\hat \cU_t}&\leq&
		\sum_{\psi\in B}\pr\brk{\psi_t=\psi|\cU_t=\hat \cU_t}
		\leq|B|(n-t)^{-\mu}\qquad\mbox{[by \Lem~\ref{Lemma_BallsUniform}]}\\
		&=&\frac{|B|}{n^\mu}\cdot\bc{1+\frac{t}{n-t}}^{\mu}\leq\frac{|B|}{n^\mu}\cdot\exp(2\theta\mu/n)
			\leq\frac{|B|}{n^\mu}\cdot\exp(9nk^{-1}\ln^2k).
		\label{eqNonSupp1}
	\end{eqnarray}
Furthermore, $|B|/n^{\mu}$ is just the probability that 
there are less than $\nu$ empty bins if $\mu$ balls are thrown uniformly and independently into $n$ bins.
Hence, we can use \Lem~\ref{Lemma_BallsIntoBins} to bound $|B|n^{-\mu}$.
To this end, observe that because we are assuming $\eps<0.1$ the bound
	$$\exp(-\mu/n)\geq\exp(-(1+\eps/3)\omega)=k^{\alpha-1}\quad\mbox{holds, where }
		\alpha=\frac{2\eps}3-\frac{\eps^2}3\geq0.6\eps.$$
Therefore, \Lem~\ref{Lemma_BallsIntoBins} entails that
	\begin{eqnarray}\nonumber
	|B|n^{-\mu}&\leq&\pr\brk{\cZ(\mu,n)\leq\exp(-\mu/n)n/2}\\
		&\leq&O(\sqrt{n})\exp\brk{-\exp(-\mu/n)n/8}
		\leq\exp\brk{-k^{\alpha-1}n/9}.
	\label{eqNonSupp2}
	\end{eqnarray}
Combining~(\ref{eqNonSupp1}) and~(\ref{eqNonSupp2}), we see that
	\begin{eqnarray*}
	P_t&=&\pr\brk{\cB_t|\cU_t=\hat \cU_t:\hat \cU_t\subset\brk{m},\,|\hat \cU_t|=\mu}
		\leq\exp\brk{nk^{-1}\bc{9\ln^2k-k^{\alpha}/9}}=o(1/n).
	\end{eqnarray*}
Thus, \Cor~\ref{Cor_T} and Lemma~\ref{Lemma_danger} imply that
	\begin{eqnarray*}
	\pr\brk{\exists t\leq T:\abs{\cbc{x\in V\setminus Z_t:\cU_t(x)=0}<\nu}}\\
		&\hspace{-8cm}\leq&\;\hspace{-4cm}\pr\brk{T>\theta}
		+\pr\brk{\max_{0\leq t\leq T}|\cU_t|>(1+\eps/3)\omega n}+\sum_{0\leq t\leq\theta}P_t=o(1),
	\end{eqnarray*}
as desired.
\qed

\section{Proof of \Prop~\ref{Prop_phase2}}\label{Sec_phase2}

Let $0<\eps<0.1$.
Throughout this section we assume that $k\geq k_0$
for a large enough $k_0=k_0(\eps)$, and that $n>n_0$ for some large enough $n_0=n_0(\eps,k)$.
Let $m=\lfloor(1-\eps)2^kk^{-1}\ln k\rfloor$,
	$\omega=(1-\eps)\ln k$, and $k_1=\lceil k/2\rceil$.
In addition, we keep the notation introduced in \Sec~\ref{Sec_process_outline}.

\subsection{Outline}

Similarly as in \Sec~\ref{Sec_process}, we will describe the execution of Phase~2 of $\Fix(\PHI)$ via a stochastic process.
Recall that $T$ denotes the time when the process {\bf PI1}--{\bf PI4} from \Sec~\ref{Sec_process} (i.e., Phase~1) stops.
Let $Z_0'=\emptyset$ and $\pi_0'=\pi_T$.
Let $U_0'=U_T$, and let $U_0'(x)$ be the number of indices $i\in U_0'$ such that
$x$ occurs positively in $\PHI_i$.
Moreover, let $Q_0'$ be the set of indices $i\in\brk{m}$ such that $\PHI_i$ is unsatisfied under $\sigma_{Z_T}$.
For $t\geq1$ we proceed as follows.

\begin{tabbing}
mmm\=mm\=mm\=mm\=mm\=mm\=mm\=mm\=mm\=\kill
{\bf PI1'}
	\> \parbox[t]{40em}{If $Q_{t-1}'=\emptyset$, the process stops.
			Otherwise let $\psi_t=\min Q_{t-1}'$.}\\
{\bf PI2'}
	\> \parbox[t]{40em}{If there are three indices $k_1<j\leq k-5$
			such that $\pi_{t-1}'(\psi_t,j)\in\{1,-1\}$ and $U_{t-1}'(|\Phi_{\psi_t j}|)=0$,
%			and $|\Phi_{\psi_t j}|\not\in Z_T$,
			then let $k_1< j_1<j_2<j_3\leq k-5$ be the lexicographically first sequence of such indices.
		Otherwise let $k-5<j_1<j_2<j_3\leq k$ be the lexicographically first
		sequence of indices $k-5<j\leq k$ such that $\Phi_{\psi_t j}\not\in Z_{t-1}'$.
		Let $Z_t'=Z_{t-1}'\cup\{|\PHI_{\psi_t j_l}|:l=1,2,3\}$.}\\
{\bf PI3'}	\> \parbox[t]{40em}{Let $U_t'$ be the set of all $i\in\brk{m}$ that satisfy the following condition.
		There is exactly one $l\in\brk{k}$ such that
			$\PHI_{il}\in V\setminus(Z_t'\cup Z_T)$ and
		for all $j\not=l$ we have $\PHI_{ij}\in Z_T\cup Z_t'\cup\overline{V\setminus Z_T}$.
		Let $U_t'(x)$ be the number of indices $i\in U_t'$ such
			that $x$ occurs positively in $\PHI_i$ ($x\in V$).}\\
{\bf PI4'}	\> \parbox[t]{40em}{Let
					$$\pi_t'(i,j)=\left\{\begin{array}{cl}
					\PHI_{ij}&\mbox{ if $(i=\psi_t\wedge j>k_1)\vee|\PHI_{ij}|\in Z_t'\cup Z_T\vee
									(i\in U_t'\wedge \pi_0(i,j)=1)$},\\
					\pi_{t-1}'(i,j)&\mbox{ otherwise.}
					\end{array}
					\right.$$
			Let $Q_t'$ be the set of all $(Z_T,Z_t')$-endangered clauses that contain less than three variables
			from $Z_t'$.}
\end{tabbing}

\noindent
Let $T'$ be the stopping time of this process.
For $t>T'$ and $x\in V$ let $\pi_t'=\pi_{T'}'$, $U_t'=U_{T'}'$, $Z_t'=Z_{T'}'$, and $U_t'(x)=U_{T'}(x)$ .

We define an equivalence relation $\equiv'_t$ by letting
$\Phi\equiv'_t\Psi$ iff $\Phi\equiv_s\Psi$ for all $s\geq0$, and $\pi_s'\brk{\Phi}=\pi_s'\brk{\Psi}$ for all $0\leq s\leq t$.
Let $\cF_t'$ be the $\sigma$-algebra generated by the equivalence classes of $\equiv'_t$.
Then $(\cF_t')_{t\geq0}$ is a filtration.

\begin{fact}\label{Fact_messbar2}
For any $t\geq0$
the map $\pi_t'$, the random variable $\psi_{t+1}'$,
the random sets $U_t'$ and $Z_t'$, and the random variables $U_t'(x)$ for $x\in V$ are $\cF_t'$-measurable.
\end{fact}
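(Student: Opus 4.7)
The plan parallels that of Fact~\ref{Fact_messbar} and proceeds by induction on $t$. I would aim to show that every object in the statement is a deterministic function of the data $(\pi_s')_{0 \leq s \leq t}$ together with the Phase~1 information $(\pi_s)_{s \geq 0}$, $Z_T$, $U_T$; measurability with respect to $\cF_t'$ then follows because $\cF_t'$ is, by construction, the $\sigma$-algebra generated by exactly these quantities. For the base case $t=0$ the identities $\pi_0' = \pi_T$, $Z_0' = \emptyset$, $U_0' = U_T$ reduce the statement to Fact~\ref{Fact_messbar}, while $Q_0'$ is read off from $\pi_T$ and $Z_T$ by comparing the sign information recorded in $\pi_0$ with $\sigma_{Z_T}$.

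The inductive step rests on a single invariant that propagates through {\bf PI4'} and is already in force at $t=0$ thanks to the analogous rule in Phase~1: for every $s \leq t$ and every position $(i,j)$, if $|\PHI_{ij}| \in Z_T \cup Z_s'$ then $\pi_s'(i,j) = \PHI_{ij}$, whereas if $\pi_s'(i,j) \in \{-1, +1\}$ then $|\PHI_{ij}| \notin Z_T \cup Z_s'$ and the sign of $\pi_s'(i,j)$ agrees with the sign of $\PHI_{ij}$ (inherited from $\pi_0$). Granting this invariant I would treat the objects in turn. First, $\psi_{t+1} = \min Q_t'$, so its measurability reduces to that of $Q_t'$. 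Next, $Z_t' = Z_{t-1}' \cup \{|\PHI_{\psi_t j_l}| : l=1,2,3\}$, and because {\bf PI2'} always chooses $j_l > k_1$, rule {\bf PI4'} has written $\pi_t'(\psi_t, j_l) = \PHI_{\psi_t j_l}$, so the three new variables are visible in $\pi_t'$. For $U_t'$ the defining condition of {\bf PI3'} --- exactly one $l$ with $\PHI_{il} \in V \setminus (Z_T \cup Z_t')$ and no $j \neq l$ with $\PHI_{ij} \notin Z_T \cup Z_t' \cup \overline{V \setminus Z_T}$ --- can be checked position by position from $\pi_t'$ via the invariant: an entry $\pi_t'(i,j) = +1$ certifies a positive literal with variable outside $Z_T \cup Z_t'$, an entry $-1$ certifies a negative literal with variable outside $Z_T$, and revealed entries supply the full literal. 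Once $U_t'$ is known, $U_t'(x)$ is obtained by counting positions $j$ with $\pi_t'(i,j) = x$ over $i \in U_t'$, since {\bf PI4'} reveals every positive literal of a clause in $U_t'$. Finally, $Q_t'$ is determined from $\pi_t'$, $Z_t'$ and $Z_T$ by verifying the $(Z_T, Z_t')$-endangered condition and counting variables from $Z_t'$ clause-by-clause, again using the invariant.

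The main obstacle I foresee is the apparent circularity between $\pi_t'$ and $U_t'$: rule {\bf PI4'} defining $\pi_t'$ already refers to $U_t'$, whereas the decoding above recovers $U_t'$ from $\pi_t'$. I would resolve this by emphasising that $U_t'$ is defined intrinsically by {\bf PI3'} as a deterministic function of $\PHI$, $Z_t'$ and $Z_T$, and $\pi_t'$ is then obtained from $(\PHI, U_t', Z_t')$ via {\bf PI4'}; the substantive claim is the converse, that $U_t'$ can be reconstructed from $\pi_t'$ alone. This converse is valid because the revelation rule is aggressive enough that any position with $\pi_t'(i,j) = \pm 1$ necessarily has $|\PHI_{ij}| \notin Z_T \cup Z_t'$, and the sign is carried explicitly by $\pm 1$, so every clue the {\bf PI3'} test needs at each position is recorded in $\pi_t'$.
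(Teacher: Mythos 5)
Your proof is correct and takes essentially the same approach the paper implicitly relies on: the paper states both Fact~\ref{Fact_messbar} and Fact~\ref{Fact_messbar2} without proof (prefacing the first with ``the following is immediate from the construction''), and your argument merely makes explicit the observation that $\cF_t'$ records the entire Phase~1 history together with $\pi_0',\ldots,\pi_t'$, and that each listed object is a deterministic function of that data, with the key bookkeeping invariant (any position $(i,j)$ with $|\PHI_{ij}|\in Z_T\cup Z_s'$ is revealed in $\pi_s'$, and any $\pm1$ entry certifies the underlying variable lies outside $Z_T\cup Z_s'$) doing the work of letting the {\bf PI3'} test and the endangered/$Q_t'$ test be decoded position by position from $\pi_t'$.
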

The same argument that we used to prove \Prop~\ref{Prop_card} in \Sec~\ref{Sec_process_outline} shows the following.

\begin{proposition}\label{Prop_card2}
Let $\cE_t'$ be the set of all pairs $(i,j)$ such that $\pi_t(i,j)\in\{\pm1\}$.
The conditional joint distribution of the variables $(|\Phi_{ij}|)_{(i,j)\in\cE_t}$ given $\cF_t'$ is uniform over
	$(V\setminus Z_t')^{\cE_t'}$.
\end{proposition}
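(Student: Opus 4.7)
The plan is to mimic the proof of Proposition~\ref{Prop_card} almost verbatim, with the filtration $\cF_t$ and process {\bf PI1}--{\bf PI4} replaced by $\cF_t'$ and {\bf PI1}--{\bf PI4'} (Phase~1 together with the first $t$ steps of Phase~2). As in (\ref{eqPPhiUni}), for any $\Phi\in\Omega_k(n,m)$ the conditional measure $\pr[\cdot|\cF_t'](\Phi)$ is uniform on the $\equiv_t'$-equivalence class $[\Phi]_t'$, so the whole task reduces to showing
$$|[\Phi]_t'|=|V\setminus Z_t'[\Phi]|^{|\cE_t'[\Phi]|}.$$

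To do so, I would construct a bijection from $(V\setminus Z_t'[\Phi])^{\cE_t'[\Phi]}$ onto $[\Phi]_t'$ exactly as before. Given $f:\cE_t'[\Phi]\to V\setminus Z_t'[\Phi]$, define the formula $\Phi_f$ by
$$(\Phi_f)_{ij}=\begin{cases}\overline{f(i,j)}&\text{if }(i,j)\in\cE_t'[\Phi]\text{ and }\pi_0(i,j)=-1,\\ f(i,j)&\text{if }(i,j)\in\cE_t'[\Phi]\text{ and }\pi_0(i,j)=1,\\ \Phi_{ij}&\text{otherwise,}\end{cases}$$
i.e.\ $\Phi_f$ agrees with $\Phi$ on every position whose variable has been exposed by the process up to time $t$ of Phase~2, and replaces the ``still hidden'' underlying variables according to $f$ while preserving their signs (which are known from $\pi_0$). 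The inverse map sends $\Psi\in[\Phi]_t'$ to $f(i,j)=|\Psi_{ij}|$ for $(i,j)\in\cE_t'[\Phi]$, so bijectivity is immediate once we verify $\Phi_f\in[\Phi]_t'$.

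The main thing to verify -- and the only nontrivial point -- is that $\Phi_f\equiv_t'\Phi$, i.e.\ that running both phases on $\Phi_f$ reproduces $\pi_s[\Phi]$ for every $0\leq s\leq T[\Phi]$ and $\pi_s'[\Phi]$ for every $0\leq s\leq t$. I would prove this by induction on the combined time parameter, split into the Phase~1 and Phase~2 pieces. For the Phase~1 piece one reuses the argument behind Proposition~\ref{Prop_card}: if $(i,j)\in\cE_t'[\Phi]$ then in particular $\pi_T(i,j)\in\{-1,1\}$, so by step {\bf PI4} the variable $|\Phi_{ij}|$ was never used to form $Z_s$ or to expose a literal in the Phase~1 trajectory; substituting it by $f(i,j)$ therefore leaves every $\phi_s,z_s,Z_s,U_s,U_s(x),\pi_s$ unchanged. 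For the Phase~2 piece one argues inductively in exactly the same way using {\bf PI4'}: at every position $(i,j)\in\cE_t'[\Phi]$ we have $\pi_s'(i,j)\in\{-1,1\}$ for all $s\leq t$, so none of $\psi_s,j_1,j_2,j_3,Z_s',U_s',U_s'(x),Q_s',\pi_s'$ for $s\leq t$ depends on the identity of $|\Phi_{ij}|$ beyond what $\pi_0(i,j)$ records. Thus $\Phi_f\equiv_t'\Phi$ and the bijection is established.

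The one subtlety is the requirement $f(i,j)\in V\setminus Z_t'[\Phi]$: this is precisely what guarantees that the substitution does not accidentally create a new occurrence of a variable added to $Z'$ during Phase~2, which would corrupt the trajectory through {\bf PI4'}. Since any $(i,j)\in\cE_t'[\Phi]$ automatically satisfies $|\Phi_{ij}|\notin Z_T\cup Z_t'[\Phi]$ (else {\bf PI4} or {\bf PI4'} would have set $\pi_t'(i,j)$ to a literal), the map $\Phi\mapsto\Phi_f$ respects the allowed range of $f$ and the induction closes, completing the proof.
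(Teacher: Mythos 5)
Your approach is exactly the one the paper intends: the paper's ``proof'' of \Prop~\ref{Prop_card2} is the one-liner ``the same argument as \Prop~\ref{Prop_card} shows the following,'' and you have supplied the details, namely the bijection $f\mapsto\Phi_f$ between the exposure-maps and the $\equiv_t'$-class. The core reasoning is sound.

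There is, however, one imprecision you should not let stand, and it is inherited from a typo in the paper's own statement. Since $\pi_0'=\pi_T$, every position $(i,j)$ with $|\PHI_{ij}|\in Z_T$ has already been exposed at the start of Phase~2 (by {\bf PI4}), so $\cE_t'$ consists of positions whose underlying variable lies outside $Z_T\cup Z_t'$. Consequently the correct codomain for $f$ is $V\setminus(Z_T\cup Z_t'[\Phi])$, not $V\setminus Z_t'[\Phi]$, and the equivalence class has size $|V\setminus(Z_T\cup Z_t')|^{|\cE_t'|}$. This is confirmed by every downstream use of the proposition in the paper (e.g.\ \Lem~\ref{Lemma_danger_prod2} and \Cor~\ref{Cor_PlanB2}, which explicitly quote uniformity over $V\setminus(Z_T\cup Z_{t-1}')$). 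As written, your map $f\mapsto\Phi_f$ with domain $(V\setminus Z_t')^{\cE_t'}$ is not a bijection onto $[\Phi]_t'$: if $f(i,j)=z_s$ for some $s\le T$, then running {\bf PI1}--{\bf PI4} on $\Phi_f$ reveals position $(i,j)$ at time $s$, so $\pi_s[\Phi_f]\neq\pi_s[\Phi]$ and $\Phi_f\notin[\Phi]_t'$. You do in fact observe in your last sentence that $|\Phi_{ij}|\notin Z_T\cup Z_t'$ for $(i,j)\in\cE_t'$, which is the needed ingredient; but you should then carry it through and restrict the domain of $f$ to $(V\setminus(Z_T\cup Z_t'[\Phi]))^{\cE_t'[\Phi]}$, stating the proposition accordingly. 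With that fix the bijection and the induction close cleanly.
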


Let 
	$$\theta'=\lfloor\exp(-k^{\eps/16})n\rfloor,\mbox{ and recall that }\theta=\lfloor4nk^{-1}\ln\omega\rfloor.$$
To prove \Prop~\ref{Prop_phase2} it is sufficient to show
that $T'\leq \theta'$ \whp, because $|Z_t'|=3t$ for all $t\leq T'$.
To this end, we follow a similar program as in \Sec~\ref{Sec_process_outline}:
we will show that $|U_t'|$ is ``small'' \whp\ for all $t\leq\theta'$, and that therefore
for $t\leq\theta'$ there are plenty of variables $x$ such that $U_t'(x)=0$.
This implies that for $t\leq\theta'$ the process will only ``generate'' very few
$(Z_T,Z_t')$-endangered clauses.
This then entails a bound on $T'$, 
because each step of the process removes (at least) one $(Z_T,Z_t')$-endangered clause from the set $Q_t'$.
In \Sec~\ref{Sec_danger2} we will infer the following bound on $|U_t'|$.

\begin{lemma}\label{Lemma_danger2}
\Whp\ for all $t\leq\theta'$ we have $|U_t'\setminus U_T|\leq n/k$.
\end{lemma}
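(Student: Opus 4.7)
The plan is to bound, for each $l \geq 2$, the number of clauses $i \in U_t' \setminus U_T$ whose clause $\PHI_i$ has exactly $l$ positive literals with underlying variable in $V \setminus Z_T$, and then sum over $l$. I first record a structural observation: if $i \in U_t' \setminus U_T$ then $\PHI_i$ has no negative literal whose variable lies in $Z_T$ (this is required by the definition of $U_t'$, and since $Z_T$ is fixed once Phase~2 begins, a clause lacking this property at time~$T$ cannot acquire it later). Combined with the fact that $i \in U_t'$ requires at least one positive literal with variable in $V \setminus (Z_t' \cup Z_T) \subseteq V \setminus Z_T$, the only way $i$ can fail to lie in $U_T$ is that $\PHI_i$ has $l \geq 2$ positive literals with variable in $V \setminus Z_T$; and for $i$ to lie in $U_t'$, at least $l-1$ of these $l$ variables must have been added to $Z_t'$ during Phase~2.

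For each $l \in \{2,\dots,k\}$, let $\cU'_{tl}$ denote the number of $i \in U_t' \setminus U_T$ whose clause has exactly $l$ positive literals with variable in $V \setminus Z_T$. I would bound $\cU'_{tl}$ by a first-moment argument patterned on the proof of \Lem~\ref{Lemma_danger_auxb}, introducing indicators analogous to $\cH_{tij}$ and $\cS_{tij}$ from \Lem~\ref{Lemma_danger_prod} but adapted to the Phase~2 process: an $\cH'$-indicator recording that a previously unexposed position of $\PHI_i$ matches the variable $z_s'$ added to $Z_s'$ at step~$s$, and an $\cS'$-indicator recording that a position remains unexposed. Applying \Prop~\ref{Prop_card2} in place of \Prop~\ref{Prop_card} and telescoping via \Lem~\ref{Lemma_filt}, each $\cH'$-indicator contributes a factor $(n - O(\theta'))^{-1}$ and each $\cS'$-indicator contributes $(1 - 1/n)$. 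Coupling this with \Lem~\ref{Lemma_moment} applied to subsets $\cM \subset \brk m$ of size $\mu = \lceil\ln^2 n\rceil$ then yields, uniformly in $t \leq \theta'$ \whp,
$$\cU'_{tl} \leq 2 m \cdot 2^{-k} \bink{k}{l} \cdot l \cdot \bcfr{3\theta'}{n-\theta'}^{l-1}.$$

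Summing over $l$ with $m \leq 2^k \omega n / k$ and $\theta' = \lfloor \exp(-k^{\eps/16}) n \rfloor$, the dominant $l = 2$ term is $O(\omega k) \cdot \exp(-k^{\eps/16}) \cdot n$, which is already $o(n/k)$ since $\exp(-k^{\eps/16})$ decays faster than any polynomial in $k$; the $l \geq 3$ terms form a rapidly convergent geometric series with ratio $O(k \exp(-k^{\eps/16}))$, contributing negligibly. Hence $\sum_{l \geq 2} \cU'_{tl} \leq n/k$ \whp\ simultaneously for all $t \leq \theta'$, which is the claim. The main obstacle will be the bookkeeping for the Phase~2 analogue of \Lem~\ref{Lemma_danger_prod}: one must verify that at the step~$s$ when a required variable is added to $Z_s'$, the relevant literal position is still in the ``unexposed'' set $\cE_s'$, so that \Prop~\ref{Prop_card2} genuinely provides uniformity over $V \setminus Z_s'$. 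This relies on the fact that Phase~1's step {\bf PI4} only exposes positions $j \leq k_1$ of the clauses $\PHI_{\phi_t}$ it processes, together with all occurrences of $z_t$ and positive literals of $Z_t$-unique clauses -- information that affects only $O(n \ln^2 n /k)$ positions in the aggregate and can be absorbed into the error term.
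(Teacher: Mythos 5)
Your proposal is correct in outline and uses the same underlying machinery as the paper's proof: conditional independence via \Prop~\ref{Prop_card2}, the product bound of \Lem~\ref{Lemma_filt}, and the first-moment device of \Lem~\ref{Lemma_moment}; the indicator you want to introduce for ``a previously unexposed position becomes one of the three variables added to $Z_s'$ at step $s$'' is exactly the paper's $\cH'_{tij}$, and its key property is \Lem~\ref{Lemma_danger_prod2}. Your structural reduction --- that $i\in U_t'\setminus U_T$ forces $l\geq2$ positive literals with variables in $V\setminus Z_T$, exactly one of which lies outside $Z_t'\cup Z_T$ and the other $l-1$ in $Z_t'\setminus Z_T$, and that $\PHI_i$ has no negative literal over $Z_T$ --- is precisely the observation the paper makes at the start of its proof of \Lem~\ref{Lemma_danger2}.

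Where you genuinely deviate is in the parameterization of the first moment. The paper's \Lem s~\ref{Lemma_nasty} and~\ref{Lemma_ugly} classify clauses by the \emph{total} number $l$ of positive literals and, separately, by the number $l'$ of those that land in $Z_{t'}'\setminus Z_t$; they then sum over the residual $l-l'-1$ positives in $Z_t$ using the binomial theorem, and need a second lemma (\Lem~\ref{Lemma_ugly}) for the regime $l\geq\ln k$. Your parameterization by $l=$ (number of positives over $V\setminus Z_T$) collapses this to a single sum, and the positives over $Z_T$ are absorbed implicitly into the per-position ``acceptable'' probability of roughly $1/2$. This is a modest simplification and it does close: the ratio between consecutive terms is $O(k\theta'/n)=O(k\exp(-k^{\eps/16}))\ll1$, so the geometric series converges without a case split on $l$. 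The one place to be careful --- and you flag it --- is that the stated bound $2m\cdot 2^{-k}\binom{k}{l}l\bc{3\theta'/(n-\theta')}^{l-1}$ buries a nontrivial calculation: the $k-l$ remaining positions contribute a factor that, after summing over the number $j$ of positives in $Z_T$ and using the Phase~1 indicators $\cH$ and $\cS$, equals $2^{-(k-l)}\bc{T/n+(1-1/n)^{T}}^{k-l}$, which is $2^{-(k-l)}(1+O(k(T/n)^2))$; since $T\leq\theta=O(nk^{-1}\ln\omega)$ this correction is $1+o(1)$, so your stated bound is valid up to constants, but this bookkeeping step is exactly where the paper's $(1-t/n)^{k-l}$ factor in $B(l,l',t)$ comes from and should be made explicit. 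Your $\cS'$-indicators for positions that stay unexposed through Phase~2 are harmless but unnecessary, since they only contribute $(1-1/n)^{O(\theta')}=1-o(1)$.
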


\begin{corollary}\label{Cor_nonSupporting2}
\Whp\ the following is true for all $t\leq\theta'$:
there are at least $n k^{\eps/3-1}$ variables $x\in V\setminus(Z_t'\cup Z_T)$ such that $U_t'(x)=0$.
\end{corollary}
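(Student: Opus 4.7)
The plan is to mirror the proof of \Cor~\ref{Cor_nonSupporting}, viewing the situation as a balls-and-bins experiment: place each $i\in U_t'$ (``ball'') into the bin $\psi_t'(i):=|\PHI_{i\,l(i)}|\in V\setminus(Z_T\cup Z_t')$, where $l(i)$ is the unique index such that $\PHI_{i\,l(i)}\in V\setminus(Z_T\cup Z_t')$. (Note that by {\bf PI4'} this literal is revealed by $\pi_t'$, so $\psi_t'$ is $\cF_t'$-measurable.) Then the quantity we want to lower bound in \Cor~\ref{Cor_nonSupporting2} is exactly the number of empty bins under $\psi_t'$, and the idea is to apply \Lem~\ref{Lemma_BallsIntoBins} together with a ``near-uniformity'' estimate for $\psi_t'$.

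The first step is to combine \Lem~\ref{Lemma_danger} and \Lem~\ref{Lemma_danger2} to obtain, \whp\ simultaneously for all $t\leq\theta'$, the bound $|U_t'|\leq|U_T\cap U_t'|+|U_t'\setminus U_T|\leq(1+\eps/3)\omega n+n/k$. Since $|Z_T|\leq 4nk^{-1}\ln\omega$ by \Prop~\ref{Prop_process} and $|Z_t'|=3t\leq 3\theta'=o(n)$, the number of bins is $|V\setminus(Z_T\cup Z_t')|=n-o(n)$, and a short calculation using $\omega=(1-\eps)\ln k$ gives the expected-number-of-empty-bins estimate $|V\setminus(Z_T\cup Z_t')|\cdot\exp\bc{-|U_t'|/|V\setminus(Z_T\cup Z_t')|}\geq n\cdot k^{-(1-2\eps/3-\eps^2/3)}(1-o(1))\geq2nk^{\eps/3-1}$ for large $k$, comfortably above the target.

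The main obstacle is to establish the Phase-2 analog of \Lem~\ref{Lemma_BallsUniform}: for any $\hat U\subset\brk m$ and any $\hat\psi:\hat U\to V$ one has $\pr\brk{\psi_t'=\hat\psi\,|\,U_t'=\hat U}\leq(n-|Z_T|-|Z_t'|)^{-|\hat U|}$. I would prove this following the template of \Lem~\ref{Lemma_BallsUniform}: introduce the coarser ``$\gamma$-information'' that forgets the identities of the revealed positive literals of $U_s'$-clauses (keeping only their $\pm1$ markers, along with all other information in $\pi_s'$), and then argue inductively in $s\leq t$ via \Prop~\ref{Prop_card2}, with a case analysis on whether the prescribed value of $\hat\psi(i)$ for the indices $i$ first entering $U_s'$ at step $s$ equals $z_s$. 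Since the processes {\bf PI1}--{\bf PI4} and {\bf PI1'}--{\bf PI4'} share the same structural template, the argument goes through with only cosmetic changes, though the bookkeeping is somewhat tedious.

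Once near-uniformity is in hand, \Lem~\ref{Lemma_BallsIntoBins} yields, for each fixed $t\leq\theta'$ and each admissible $\hat U$ of size $\mu\leq(1+\eps/3)\omega n+n/k$, that the number of empty bins falls below $nk^{\eps/3-1}$ with probability at most $O(\sqrt\mu)\exp(-\Omega(nk^{-1+2\eps/3}))=o(1/n^2)$. A union bound over $t\leq\theta'\leq n$ and over all admissible $\hat U$, combined with the \whp\ bounds on $|U_t'|$, on $|Z_T|$, and on $T\leq\theta$ provided by \Cor~\ref{Cor_T} and \Lem s~\ref{Lemma_danger} and~\ref{Lemma_danger2}, completes the argument.
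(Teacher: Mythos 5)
The paper's proof of this corollary is a one-liner that you have missed: it applies \Cor~\ref{Cor_nonSupporting} directly to get at least $nk^{\eps/2-1}$ variables $x\in V\setminus Z_T$ with $U_T(x)=0$, then observes that each $i\in U_t'\setminus U_T$ spoils at most one such variable and that at most $|Z_t'|\leq 3\theta'$ good variables are removed from the ground set, yielding $nk^{\eps/2-1}-n/k-3\theta'\geq nk^{\eps/3-1}$ by \Lem~\ref{Lemma_danger2}. No new balls-and-bins machinery is needed. Your proposal instead re-runs the entire balls-and-bins program from Phase~1, which is a genuinely different (and far heavier) route.

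Beyond the inefficiency, there is a real gap in your plan. You assert that a Phase-2 analog of \Lem~\ref{Lemma_BallsUniform} ``goes through with only cosmetic changes.'' This understates a structural obstacle: at the start of Phase~2, the ``bins'' of all $i\in U_0'=U_T$ are already $\cF_0'$-measurable, because {\bf PI4} reveals all positive literals of $U_T$-clauses by time $T$ (i.e., $\pi_T(i,j)=\PHI_{ij}$ for $i\in U_T$ and positive positions $j$, so $\psi_T(i)$ is a deterministic function of $\cF_T=\cF_0'$). Hence the ``$\gamma$-coarsening'' you describe cannot simply strip out positive-literal identities that were revealed during Phase~2; to recover near-uniformity for the balls in $U_T\cap U_t'$ you would have to coarsen all the way back into Phase~1 and redo the induction of \Lem~\ref{Lemma_BallsUniform} jointly across both phases, including a Phase-2 version of the delicate identity~(\ref{eqBallUniform6}). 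Additionally, the Phase~2 process {\bf PI2'} bases its decisions on the counts $U_{t-1}'(\cdot)$, which depend precisely on the literal identities you propose to forget, so even the measurability bookkeeping is not parallel. None of this is unsalvageable, but it is a substantial reworking, not a cosmetic one, and it buys nothing that the paper's two-line perturbation from the $t=0$ case doesn't already give.
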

\begin{proof}
By \Cor~\ref{Cor_nonSupporting} there are at least $nk^{\eps/2-1}$ variables
$x\in V\setminus Z_T$ such that $U_T(x)=0$ \whp\
Furthermore, by \Lem~\ref{Lemma_danger2} we have $|U_t'\setminus U_T|\leq n/k$ \whp\
Moreover, $|Z_t'|\leq 3t$.
Hence, \whp\ the number of $x\in V\setminus(Z_t'\cup Z_T)$ such that $U_t'(x)=0$
is at least
	$nk^{\eps/2-1}-n/k-3\theta'\geq n k^{\eps/3-1}$.
\qed\end{proof}

\begin{corollary}\label{Cor_PlanB2}
Let $\cY$ be the set of all $t\leq\theta'$
such that there are less than $3$ indices $k_1<j\leq k-5$
such that $\pi_{t-1}'(\psi_t,j)\in\{-1,1\}$ and $U_{t-1}'(|\Phi_{\psi_tj}|)=0$.
Then $|\cY|\leq3\theta'\exp(-k^{\eps/4})$ \whp
\end{corollary}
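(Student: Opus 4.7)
The plan is to mimic the structure of the proof of \Cor~\ref{Cor_PlanB}. For $1\leq t\leq\theta'$ I introduce an indicator $\cB_t$ which equals $1$ precisely when $t\in\cY$ and, simultaneously, the three $\cF_{t-1}'$-measurable good events (a) $T'\geq t$, (b) $|U_{t-1}'\setminus U_T|\leq n/k$ (cf.~\Lem~\ref{Lemma_danger2}), and (c) $\abs{\cbc{x\in V\setminus(Z_{t-1}'\cup Z_T):U_{t-1}'(x)=0}}\geq nk^{\eps/3-1}$ (cf.~\Cor~\ref{Cor_nonSupporting2}) all hold. Since (a)--(c) hold for all $t\leq\theta'$ whp, it suffices to bound $\sum_{t=1}^{\theta'}\cB_t$.

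Conditional on $\cF_{t-1}'$, let $J_t=\cbc{k_1<j\leq k-5:\pi_{t-1}'(\psi_t,j)\in\{-1,1\}}$ be the set of still-unrevealed positions in the upper half of $\psi_t$. By \Prop~\ref{Prop_card2} the variables $(|\PHI_{\psi_t j}|)_{j\in J_t}$ are mutually independent and uniform on $V\setminus Z_{t-1}'$, so each lies in the zero-score set of~(c) with probability at least $nk^{\eps/3-1}/|V\setminus Z_{t-1}'|\geq k^{\eps/3-1}$, independently across $j\in J_t$. Provided $|J_t|\geq k/4$, the number of ``useful'' positions stochastically dominates $\Bin(k/4,k^{\eps/3-1})$, whose mean is at least $k^{\eps/3}/4$, so the Chernoff bound~(\ref{eqChernoff}) yields $\pr\brk{\mbox{fewer than 3 useful positions}\mid\cF_{t-1}'}\leq\exp(-k^{\eps/3}/20)$. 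Hence $\Erw\brk{\cB_t\mid\cF_{t-1}'}\leq\exp(-k^{\eps/3}/20)$.

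The hard part is justifying $|J_t|\geq k/4$. A position $k_1<j\leq k-5$ of $\psi_t$ can fail to lie in $J_t$ only if (i) $|\PHI_{\psi_t j}|\in Z_T\cup Z_{t-1}'$, or (ii) $\PHI_{\psi_t j}$ is positive and $\psi_t\in U_s$ for some $s\leq T$ or $\psi_t\in U_s'$ for some $s<t$ (so the positive literal was revealed by {\bf PI4} or {\bf PI4'}). Using $|Z_T|\leq4nk^{-1}\ln\omega$ (\Cor~\ref{Cor_T}), $|Z_{t-1}'|\leq3\theta'=o(n/k)$, and \Lem~\ref{Lemma_double}, a first-moment calculation in the spirit of \Lem~\ref{Lemma_firstMoment} shows that only $o(\theta')$ indices $i\in\brk{m}$ admit more than $k/4$ revealed positions among $k_1<j\leq k-5$ via (i) or (ii). Since $\cB_t$ concerns only the single clause $\psi_t\in Q_{t-1}'$, those offending indices contribute at most $o(\theta'\exp(-k^{\eps/4}))$ to the final bound, which is negligible.

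To conclude, I apply \Lem~\ref{Lemma_moment} to $(\cB_t)_{1\leq t\leq\theta'}$ exactly as in the closing argument of \Prop~\ref{Prop_process}: the product bound $\Erw\brk{\prod_{t\in S}\cB_t\mid\cF_0'}\leq\lambda^{|S|}$ with $\lambda=\exp(-k^{\eps/3}/20)$ follows by iterating \Lem~\ref{Lemma_filt}, and \Lem~\ref{Lemma_moment} then gives $\sum_{t\leq\theta'}\cB_t\leq 2\lambda\theta'\leq 2\theta'\exp(-k^{\eps/4})$ whp for sufficiently large $k$. Combining with the $o(1)$ failure probability of (a)--(c) yields $|\cY|\leq3\theta'\exp(-k^{\eps/4})$ whp, as claimed.
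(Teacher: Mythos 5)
Your overall strategy — define an indicator $\cB_t$ that sandwiches $\cY$ up to whp events, bound $\Erw[\cB_t|\cF_{t-1}']$ using \Prop~\ref{Prop_card2} and Chernoff, then apply a moment argument — matches the paper's plan, and the use of \Lem~\ref{Lemma_danger2} and \Cor~\ref{Cor_nonSupporting2} as your ``good events'' is sound. However, your handling of the key step you yourself flag as ``the hard part'' (establishing $|J_t|\geq k/4$) has a genuine gap.

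First, you never invoke the crucial structural fact that the paper leans on: $\PHI_{\psi_t}$ is $(Z_T,Z_{t-1}')$-endangered, hence there is \emph{no} index $j$ with $\pi_{t-1}'(\psi_t,j)=1$. Your case~(ii), positive literals revealed because $\psi_t\in U_s$ or $\psi_t\in U_s'$, simply cannot occur for the currently selected clause, so half of the accounting you sketch is unnecessary once this observation is made. Missing this also means you cannot cleanly reduce the question of $|J_t|\geq k/4$ to counting positions in $Z_T\cup Z_{t-1}'$, which is what makes the problem tractable.

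Second, even granting that reduction, your quantitative claim is a non sequitur. You assert that a first-moment calculation ``in the spirit of \Lem~\ref{Lemma_firstMoment}'' shows only $o(\theta')$ indices $i$ have $\geq k/4$ revealed positions, and then immediately conclude those indices ``contribute at most $o(\theta'\exp(-k^{\eps/4}))$'' to $|\cY|$. But each offending index contributes up to one unit to $|\cY|$, so $o(\theta')$ offending indices give only the bound $o(\theta')$ — which is far too weak, since the target is $3\theta'\exp(-k^{\eps/4})$ and that is smaller by an exponential factor. What is actually needed, and what the paper proves as \Lem~\ref{Lemma_crude}, is the much sharper bound $n\exp(-k)$ on the number of clauses with $\geq k/4$ positions $k_1<j\leq k$ whose variable lies in $Z_T$. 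That bound is obtained by exactly the kind of deferred-decisions / \Lem~\ref{Lemma_filt} computation you use elsewhere, not by anything resembling \Lem~\ref{Lemma_firstMoment}, which concerns clauses with \emph{few} positive literals hitting a small set and gives only a $\sqrt{\delta}n$ bound. The paper then splits $|\cY|\leq\cY'+\cY''$, where $\cY''$ is the count of these offending clauses, and closes the argument with this split. Without an analogue of \Lem~\ref{Lemma_crude} and this explicit decomposition, your proof does not establish the stated bound.

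Finally, a smaller point: the paper uses a falling-factorial moment bound to control $\sum_t\cB_t'$ rather than \Lem~\ref{Lemma_moment} directly (which is stated for families indexed by $\brk m$ rather than $\brk{\theta'}$), but that difference is cosmetic; adapting \Lem~\ref{Lemma_moment} as you suggest is fine in principle once the conditional expectation bound is in hand.
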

We defer the proof of \Cor~\ref{Cor_PlanB2} to \Sec~\ref{Sec_PlanB2}.
Furthermore, in \Sec~\ref{Sec_unsat} we will prove the following.

\begin{corollary}\label{Cor_unsat}
\Whp\ the total number of $(Z_T,Z_{\theta'}')$-endangered clauses is at most $\theta'$.
\end{corollary}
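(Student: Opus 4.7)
The strategy is to split the $(Z_T,Z_{\theta'}')$-endangered clauses into two categories: (A) those already unsatisfied under the Phase~1 assignment $\sigma_{Z_T}$, and (B) those satisfied under $\sigma_{Z_T}$ but endangered, i.e.\ with every true literal having variable in $Z_{\theta'}'$. Category (A) has size at most $\exp(-k^{\eps/8})n$ whp by \Prop~\ref{Prop_process}(3), which is $o(\theta')$ since $\exp(-k^{\eps/8})=o(\exp(-k^{\eps/16}))$ for large $k$.

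For category (B) I would track the set $E_t$ of clauses $\Phi_i$ that become newly $(Z_T,Z_t')$-endangered at step $t$ of Phase~2 --- meaning every true literal of $\Phi_i$ with variable in $V\setminus Z_{t-1}'$ lies in the three new additions $Z_t'\setminus Z_{t-1}'$ --- and bound $|B|\leq\sum_{t\leq\theta'}|E_t|$. In a good step $t\notin\cY$ (with $\cY$ as in \Cor~\ref{Cor_PlanB2}), the new variables $x_1,x_2,x_3$ satisfy $U_{t-1}'(x_\ell)=0$ by construction, so no clause with a unique surviving true literal outside $Z_{t-1}'$ can enter $E_t$. Hence any $\Phi_i\in E_t$ must share at least two variables with $\{x_1,x_2,x_3\}\subseteq\Phi_{\psi_t}$, and therefore with $\Phi_{\psi_t}$ itself. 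By \Lem~\ref{Lemma_double}(2) the total number of such indices $i$ is at most $\ln n$ whp, so $\sum_{t\notin\cY}|E_t|\leq\ln n=o(\theta')$.

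In a bad step $t\in\cY$, I bound $|E_t|\leq\sum_{\ell=1}^3 U_{t-1}'(x_\ell)$ up to $O(\ln n)$ multi-cover corrections. By \Prop~\ref{Prop_card2}, each $x_\ell$ with $\pi_{t-1}'(\psi_t,j_\ell)\in\{\pm1\}$ is uniform in $V\setminus Z_{t-1}'$ given $\cF_{t-1}'$, so $\Erw[|E_t|\mid\cF_{t-1}']=O(|U_{t-1}'|/n)=O(\omega)$ combining \Prop~\ref{Prop_process}(2), \Lem~\ref{Lemma_danger2}, and $|Z_{t-1}'|\leq 3\theta'$. A martingale-concentration argument via \Lem~\ref{Azuma}, analogous to the end of the proof of \Prop~\ref{Prop_process} and with $|E_t|$ truncated at $\ln^2 n$ using \Lem~\ref{Lemma_double}, then gives $\sum_{t\in\cY}|E_t|\leq O(\omega)\cdot|\cY|+o(\theta')$. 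Finally \Cor~\ref{Cor_PlanB2} supplies $|\cY|\leq 3\theta'\exp(-k^{\eps/4})$, so $\sum_{t\in\cY}|E_t|=O(\omega\theta'\exp(-k^{\eps/4}))=o(\theta')$. Combining the bounds on (A) and (B) proves the claim. The main obstacle is the good-step analysis: cleanly accounting for every route by which a clause might enter $E_t$ given $U_{t-1}'(x_\ell)=0$, and verifying that the only remaining possibility --- the multi-variable-cover configuration where two of the added variables simultaneously cover distinct surviving true literals of the same clause --- is absorbed by the sparsity estimates of \Lem~\ref{Lemma_double}.
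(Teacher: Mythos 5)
Your decomposition into ``good steps'' and ``bad steps'' is a plausible idea, but the central claim driving the good-step bound is wrong, and this is where the paper's argument does real work that your sketch misses.

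You assert that if $t\notin\cY$ then, because the three added variables $x_1,x_2,x_3$ satisfy $U_{t-1}'(x_\ell)=0$, ``no clause with a unique surviving true literal outside $Z_{t-1}'$ can enter $E_t$,'' so every newly endangered clause shares two variables with $\PHI_{\psi_t}$ and is absorbed by \Lem~\ref{Lemma_double}. This does not follow. Recall from {\bf PI3'} that $i\in U_{t-1}'$ requires \emph{all} literals $\PHI_{ij}$ with $j\neq l$ to lie in $Z_T\cup Z_{t-1}'\cup\overline{V\setminus Z_T}$; in particular no negative literal $\bar y$ with $y\in Z_T$ is permitted. Now take a clause $\PHI_i$ that has exactly one positive literal at a variable $x_\ell\in V\setminus(Z_T\cup Z_{t-1}')$, all its other positive literals in $Z_T\cup Z_{t-1}'$, and a negative literal $\bar y$ with $y\in Z_T\cap Z_{t-1}'$. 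Under $\sigma_{Z_T}$ that literal $\bar y$ is true, but its variable already lies in $Z_{t-1}'$; so the unique ``surviving'' true literal is the positive one at $x_\ell$. Yet $i\notin U_{t-1}'$ because of the offending $\bar y$, so $U_{t-1}'(x_\ell)=0$ gives no information. If $x_\ell$ is one of the three variables added at the good step $t$, this $\PHI_i$ enters $E_t$ without sharing two variables with $\PHI_{\psi_t}$. Your accounting simply omits these clauses.

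This is precisely the phenomenon the paper's proof is organized around. It introduces the set $\cZ=\bigcup_{s\in\cY}Z_s'\setminus Z_{s-1}'$ of variables added in bad steps, observes that $Z_T\cap Z_{\theta'}'\subset\cZ$ (only a bad {\bf PI2'} step can place a $Z_T$-variable into $Z'$), and makes ``$\PHI_i$ contains a variable from $\cZ$'' a separate category (e.). Your counterexample clauses fall exactly into that category via the variable $y\in Z_T\cap Z_{t-1}'\subset\cZ$. The count of such clauses is \emph{not} innocuous: a naive bound of $\ln^2 n$ clause occurrences per variable (\Lem~\ref{Lemma_double}) gives $|\cZ|\ln^2 n\leq 9\theta'\exp(-k^{\eps/4})\ln^2 n$, which is \emph{not} $o(\theta')$ because of the $\ln^2 n$ factor; the paper needs the sharper first-moment estimate \Lem~\ref{Lemma_firstMoment} (restricted to clauses with few positive literals) together with a separate bound on clauses with many positive literals all in $Z_T\cup Z'$ (its category d., via \Lem s~\ref{Lemma_danger_aux} and~\ref{Lemma_ugly}). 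None of that appears in your argument. The bad-step expectation/martingale sketch has the same blind spot (a newly endangered clause whose forcing true literal is a negative occurrence of an $x_\ell\in Z_T$ is not counted by $U_{t-1}'(x_\ell)$), plus technical issues you'd have to sort out about summing over the random index set $\cY$ and the fact that forced picks in {\bf PI2'} need not be uniform. So the proposal has a genuine gap; repairing it would in effect require importing the paper's $\cZ$-bookkeeping and the categories d.\ and e., at which point you have the paper's proof rather than a different route.
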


\noindent\emph{Proof of \Prop~\ref{Prop_phase2}.}
We claim that  $T'\leq\theta'$ \whp;
this implies the proposition because $|Z_{T'}|=3T'$.
To see that $T'\leq\theta'$ \whp, 
let $X_0$ be the total number of $(Z_T,Z_{\theta'}')$-endangered clauses,
and let $X_t$ be the number of $(Z_T,Z_{\theta'}')$-endangered clauses that contain less than 3 variables from $Z_t'$.
Then the construction {\bf PI1'}--{\bf PI4'} ensures that $0\leq X_t\leq X_0-t$ for all $t\leq T'$.
Hence, $T'\leq X_0$, and thus the assertion follows from \Cor~\ref{Cor_unsat}.
\qed

\subsection{Proof of \Lem~\ref{Lemma_danger2}}\label{Sec_danger2}

Let $\cH_{tij}$, $\cS_{tij}$ be as in~(\ref{eqSH}) and let in addition
	\begin{eqnarray*}
	\cH_{tij}'&=&\left\{\begin{array}{cl}
		1&\mbox{ if }\pi_{t-1}'(i,j)=1,\,\pi_t'(i,j)\in Z_t',\mbox{ and }T\leq\theta,\\
		0&\mbox{ otherwise.}\end{array}\right.
	\end{eqnarray*}

\begin{lemma}\label{Lemma_danger_prod2}
For any 
$\cI'\subset\brk{\theta'}\times\brk{m}\times\brk{k}$ we have
	$\Erw\brk{\prod_{(t,i,j)\in\cI'}\cH_{tij}'|\cF_0'}\leq
				\bc{3/(n-\theta-3\theta')}^{|\cI'|}.$
\end{lemma}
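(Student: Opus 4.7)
The plan is to mirror the proof of Lemma~\ref{Lemma_danger_prod}: apply Lemma~\ref{Lemma_filt} to the decomposition $\prod_{(t,i,j)\in\cI'}\cH'_{tij}=\prod_{t=1}^{\theta'}X_t$ with $\cI'_t=\{(i,j):(t,i,j)\in\cI'\}$ and $X_t=\prod_{(i,j)\in\cI'_t}\cH'_{tij}$. The crux is the pointwise bound
\[
\Erw\brk{X_t\mid\cF'_{t-1}}\le\bcfr{3}{n-\theta-3\theta'}^{|\cI'_t|}.
\]
Fix a formula $\Phi$ with $T\brk{\Phi}\le\theta$ and $\pi'_{t-1}\brk{\Phi}(i,j)=1$ for every $(i,j)\in\cI'_t$ (otherwise $X_t\equiv0$), and condition on $\PHI\equiv'_{t-1}\Phi$.

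Two observations drive the proof. First, tracking \textbf{PI4} and \textbf{PI4'}, any variable that enters $Z_T$ or $Z'_s$ at some step $s\le t-1$ has all of its positions revealed at that moment, so $\pi'_{t-1}(i,j)=1$ forces $|\PHI_{ij}|\notin Z_T\cup Z'_{t-1}$. Writing $\Delta_t:=Z'_t\setminus Z'_{t-1}$, a set of at most three variables by \textbf{PI2'}, the defining condition $\pi'_t(i,j)\in Z'_t$ of $\cH'_{tij}$ then reduces to $|\PHI_{ij}|\in\Delta_t$. Second---and this is the key point---no $(i,j)\in\cI'_t$ can have $i=\psi_t$: the selected clause $\PHI_{\psi_t}$ is $(Z_T,Z'_{t-1})$-endangered, so every positive literal in $\PHI_{\psi_t}$ has its underlying variable in $Z_T\cup Z'_{t-1}$, which is incompatible with $\pi'_{t-1}(\psi_t,j)=1$.

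By Proposition~\ref{Prop_card2} at time $t-1$, the family $(|\PHI_{ij}|)_{(i,j)\in\cE'_{t-1}}$ is mutually independent and uniformly distributed over $V\setminus(Z_T\cup Z'_{t-1})$ in the conditional measure $\pr\brk{\cdot\mid\cF'_{t-1}}(\Phi)$, a set of size $N\ge n-\theta-3\theta'$. Since $\Delta_t$ is a deterministic function of $(|\PHI_{\psi_t j}|)_{j\in\brk{k}}$ while every $(i,j)\in\cI'_t$ has $i\ne\psi_t$ by the second observation, the variables $(|\PHI_{ij}|)_{(i,j)\in\cI'_t}$ are mutually independent and independent of $\Delta_t$. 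Conditioning on the value of $\Delta_t$, each event $\{|\PHI_{ij}|\in\Delta_t\}$ then occurs independently with probability at most $3/N\le3/(n-\theta-3\theta')$, yielding the desired pointwise bound. Assembling via Lemma~\ref{Lemma_filt} completes the proof.

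The main conceptual obstacle is the second observation: identifying that the ``current'' clause $\psi_t$ never contributes to $\cI'_t$. Without it one would have to disentangle the correlation between $\Delta_t$ and positions $(\psi_t,j)\in\cI'_t$ lying in the same clause. With it, the computation reduces to the clean setting of Lemma~\ref{Lemma_danger_prod}, whose proof handles the analogous situation because the chosen clause in phase~1 is all-negative and hence trivially avoids the set of positive positions $\cI_t$.
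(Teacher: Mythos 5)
Your proof is correct and follows essentially the same route as the paper's: decompose over $t$, verify the per-step bound $\Erw\brk{X_t\mid\cF'_{t-1}}\le(3/(n-\theta-3\theta'))^{|\cI'_t|}$ by observing that $\pi'_{t-1}(i,j)=1$ excludes $i=\psi_t$ (so the freshly exposed positions are independent of the three new elements of $Z'_t$), and assemble with \Lem~\ref{Lemma_filt}. You spell out the conditioning on $\Delta_t=Z'_t\setminus Z'_{t-1}$ a bit more explicitly and correctly use $\le$ where the paper writes an equality, but the argument is the same.
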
\label{Lemma_HHS}
\begin{proof}
Let $\cI_t'=\{(i,j):(t,i,j)\in\cI'\}$ and $X_t=\prod_{(i,j)\in\cI_t'}\cH_{tij}'$.
Due to \Lem~\ref{Lemma_filt} it suffices to show
	\begin{equation}\label{eqHHS3}
	\Erw\brk{X_t|\cF_{t-1}'}\leq\bc{3/(n-\theta-3\theta')}^{|\cI_t'|}\quad\mbox{for all $t\leq\theta'$.}
	\end{equation}
To see this, let $1\leq t\leq\theta'$ and consider a formula $\Phi$
such that $T\brk{\Phi}\leq\theta$, $t\leq T'\brk{\Phi}$, and 
$\pi_{t-1}'(i,j)\brk{\Phi}=1$ for all $(i,j)\in\cI_t'$.
We condition on the event $\PHI\equiv_{t-1}'\Phi$.
Then at time $t$ steps {\bf PI1'}--{\bf PI2'} obtain $Z_t'$ by adding three variables  that occur in clause $\PHI_{\psi_t}$,
which is $(Z_T,Z_{t-1}')$-endangered.
Let $(i,j)\in\cI_t'$.
Since $\PHI\equiv_{t-1}\Phi$ and $\pi_{t-1}(i,j)\brk\Phi=1$,
the literal $\PHI_{ij}\not\in Z_T\cup Z_{t-1}'$ is positive,
and thus $\PHI_i$ is not $(Z_T,Z_{t-1}')$-endangered.
Hence, $\psi_t\not=i$.
Furthermore, by \Prop~\ref{Prop_card2} in the conditional distribution $\pr\brk{\cdot|\cF_{t-1}'}\bc{\Phi}$
the variables $(\PHI_{ij})_{(i,j)\in\cI_t'}$ are independently uniformly distributed over the set $V\setminus(Z_T\cup Z_{t-1}')$.
Hence, 
	\begin{equation}\label{eqHHS4}
	\pr\brk{\PHI_{ij}\in Z_t'|\cF_{t-1}'}\brk{\Phi}=3/|V\setminus(Z_T\cup Z_{t-1}')|\qquad\mbox{for any $(i,j)\in\cI_t'$},
	\end{equation}
and these events are mutually independent.
Since $|Z_T|=n-T$ and $T\leq\theta$, and because $|Z_{t-1}'|=3(t-1)$, (\ref{eqHHS4}) implies~(\ref{eqHHS3}).
\qed\end{proof}

\begin{lemma}\label{Lemma_nasty}
Let $2\leq l\leq\sqrt{k}$, $1\leq l'\leq l-1$, $1\leq t\leq\theta$, and $1\leq t'\leq\theta'$.
For each $i\in\brk{m}$ let $X_i=1$
if $T\geq t$, $T'\geq t'$, and the following four events occur:
\begin{enumerate}
\item[a.] $\PHI_i$ has exactly $l$ positive literals.
\item[b.] $l'$ of the positive literals of $\PHI_i$ lie in $Z_{t'}'\setminus Z_t$.
\item[c.] $l-l'-1$ of the positive literals of $\PHI_i$ lie in $Z_t$.
\item[d.] No variable from $Z_t$ occurs in $\PHI_i$ negatively.
\end{enumerate}
Let
	$$B(l,l',t)=4\omega n\cdot\bcfr{6\theta'k}{n}^{l'}\cdot\bink{k-l'-1}{l-l'-1}\bcfr{t}{n}^{l-l'-1}(1-t/n)^{k-l}.$$
Then
	$\pr\brk{\sum_{i=1}^mX_i>B(l,l',t)}=o(n^{-3}).$
\end{lemma}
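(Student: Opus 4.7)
The plan is to adapt the first-moment / $\mu$-th moment strategy underlying Lemmas \ref{Lemma_danger_auxb} and \ref{Lemma_danger_aux}. Applying Lemma \ref{Lemma_moment} with $\mu = \lceil \ln^2 n \rceil$ reduces the task to proving that, for every $\cM \subset \brk m$ of size $\mu$,
\[
\Erw\brk{\prod_{i \in \cM} X_i} \;\leq\; \lambda^\mu
\]
with $\lambda = (1+o(1))\,B(l,l',t) / m$; Lemma \ref{Lemma_moment} then yields the failure probability $n^{-10} = o(n^{-3})$, which is sufficient.

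For a fixed $\cM$ I would take a union bound over ``configurations''. A configuration records, for each $i \in \cM$: a set $P_i \subset \brk k$ of size $l$ of positive positions; a subset $P_i^{(1)} \subset P_i$ of size $l-l'-1$ (the positives eventually lying in $Z_t$); a further subset $P_i^{(2)} \subset P_i \setminus P_i^{(1)}$ of size $l'$ (the positives eventually lying in $Z_{t'}'\setminus Z_t$); and time-maps $t_i : P_i^{(1)} \to \brk t$ and $s_i : P_i^{(2)} \to \brk{t'}$ recording when each such variable was added. The total number of configurations is at most $\bc{\bink{k}{l}\bink{l}{l-l'-1}(l'+1)\,t^{l-l'-1}(t')^{l'}}^{\mu}$.

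For a fixed configuration, let $Y_i$ be the indicator that the signs of $\PHI_i$ are positive precisely at the positions in $P_i$, and set
\[
\cI = \{(t_i(j),i,j): i \in \cM,\, j \in P_i^{(1)}\},\quad \cI' = \{(s_i(j),i,j): i \in \cM,\, j \in P_i^{(2)}\},\quad \cJ = \{(s,i,j): 1\leq s \leq t,\, i \in \cM,\, j \in \brk k \setminus P_i\}.
\]
Conditions a--d then entail $Y_i = 1$ for every $i \in \cM$, $\cH_{sij}=1$ on $\cI$, $\cH'_{sij}=1$ on $\cI'$ and $\cS_{sij}=1$ on $\cJ$ (the last using monotonicity of $\cS_{sij}$ in $s$). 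Since $Y_i$, $\cH_{sij}$ and $\cS_{sij}$ are all $\cF_0'$-measurable (they depend only on the input signs and on Phase~1), conditioning on $\cF_0'$ and invoking Lemma \ref{Lemma_danger_prod2} peels off the $\cH'$ factor, contributing $\bc{3/(n-\theta-3\theta')}^{l'\mu}$. The remaining expectation splits via the $\cF_0$-independence of the $Y_i$'s (each with $\Erw\brk{Y_i}=2^{-k}$) together with Lemma \ref{Lemma_danger_prod} applied to the $\cH$ and $\cS$ factors, exactly as in the proofs of Lemmas \ref{Lemma_danger_auxb} and \ref{Lemma_danger_aux}. Multiplying the per-configuration bound by the configuration count, using the identity $\bink{k}{l}\bink{l}{l-l'-1}(l'+1) = k\bink{k-1}{l'}\bink{k-l'-1}{l-l'-1}$, the estimates $\bink{k-1}{l'} \leq k^{l'}$ and $t' \leq \theta'$, and $m = (1-\eps) 2^k \omega n/k$, I would arrive at $\lambda m \leq B(l,l',t)\cdot \eta$, where $\eta$ collects the error terms that arise from comparing $(n-\theta)^{-(l-l'-1)}$ with $n^{-(l-l'-1)}$, $(n-\theta-3\theta')^{-l'}$ with $n^{-l'}$ and $(1-1/n)^{t(k-l)}$ with $(1-t/n)^{k-l}$. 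Just as with the factor $\eta$ in the proof of Lemma \ref{Lemma_danger_auxb}, one checks $\eta \leq \exp\bc{O(l\theta/n)+O(k(\theta/n)^2)+O(l'\theta'/n)} = 1+o(1)$ using $l \leq \sqrt k$, $\theta = O(nk^{-1}\ln\omega)$ and $\theta' = o(n)$.

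The main obstacle is the careful bookkeeping required when invoking Lemmas \ref{Lemma_danger_prod} and \ref{Lemma_danger_prod2} in tandem, since they live over the two different filtrations $\cF_0 \subset \cF_0'$. The key observation is that all Phase~1 quantities (the $Y_i$'s, $\cH_{sij}$'s and $\cS_{sij}$'s) are frozen once we condition on $\cF_0'$, so the $\cH'$-factor---which genuinely pertains to Phase~2---can be handled independently through Lemma \ref{Lemma_danger_prod2}, leaving an expression of exactly the form already analysed in the first-phase lemmas. Beyond this structural point, the calculation is a routine if somewhat tedious repackaging of arguments previously developed.
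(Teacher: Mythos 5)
Your proposal matches the paper's own proof essentially step by step: the same reduction via Lemma~\ref{Lemma_moment} with $\mu=\lceil\ln^2 n\rceil$, the same union bound over configurations (the paper calls your $P_i^{(1)}$, $P_i^{(2)}$ by $H_i$, $H_i'$), the same set-up of $\cI$, $\cI'$, $\cJ$, the same joint application of Lemmas~\ref{Lemma_danger_prod} and~\ref{Lemma_danger_prod2}, and the same $\eta$-style cleanup at the end. Your remark that the $Y_i$'s, $\cH_{sij}$'s, and $\cS_{sij}$'s are $\cF_0'$-measurable so that the $\cH'$-factor can be peeled off cleanly is a helpful explicit justification of a step the paper leaves somewhat implicit, but it does not alter the argument.
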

\begin{proof}
We are going to apply \Lem~\ref{Lemma_moment}.
Set $\mu=\lceil\ln^2n\rceil$ and let $\cM\subset\brk{m}$ be a set of size $\mu$.
Let $\cE_\cM$ be the event that $X_i=1$ for all $i\in\cM$.
Let $P_i\subset\brk{k}$ be a set of size $l$,
and let $H_i,H_i'\subset P_i$ be disjoint sets such that $|H_i\cup H_i'|=l-1$ and $|H_i'|=l'$ for each $i\in\cM$.
Let $\cP=(P_i,H_i,H_i')_{i\in\cM}$.
Furthermore, let $t_i:H_i\rightarrow\brk{t}$ and $t_i':H_i'\rightarrow\brk{t'}$ for all $i\in\cM$, and set $\cT=(t_i,t_i')_{i\in\cM}$.
Let $\cE_\cM(\cP,\cT)$ be the event that $T\geq t$, $T'\geq t'$, and the following statements are true
for all $i\in\cM$:
\begin{enumerate}
\item[a'.] The literal $\PHI_{ij}$ is positive for all $j\in P_i$ and negative for all $j\in\brk k\setminus P_i$.
\item[b'.] $\PHI_{ij}\in Z_{t_i'(j)}'\setminus Z_{t_i'(j)-1}'$ for all $i\in\cM$ and $j\in H_i'$.
\item[c'.] $\PHI_{ij}=z_{t_i(j)}$ for all $i\in\cM$ and $j\in H_{i}$.
\item[d'.] No variable from $Z_t$ occurs  negatively in $\PHI_i$.
\end{enumerate}
If $\cE_\cM$ occurs, then there exist $(\cP,\cT)$ such that $\cE_\cM(\cP,\cT)$ occurs.
Hence, we are going to use the union bound.
For each $i\in\brk{M}$ there are
	$$\bink{k}{1,l',l-l'-1}\mbox{ ways to choose the sets $P_i$, $H_i$, $H_i'$.}$$
Once these are chosen, there are
	$${t'}^{l'}\mbox{ ways to choose the map $t_i'$, and $t^{l-l'-1}$ ways to choose the map $t_i$.}$$
Thus, 
	\begin{eqnarray}\label{eqnasty1}
	\pr\brk{\cE_\cM}&\leq&\sum_{\cP,\cT}\pr\brk{\cE_\cM(\cP,\cT)}\leq
				\brk{\bink{k}{1,l',l-l'-1}{t'}^{l'}t^{l-l'-1}}^\mu\max_{\cP,\cT}\pr\brk{\cE_\cM(\cP,\cT)}.
	\end{eqnarray}

Hence, we need to bound $\pr\brk{\cE_\cM(\cP,\cT)}$ for any given $\cP,\cT$.
To this end, let
	\begin{eqnarray*}
	\cI&=&\cI(\cM,\cP,\cT)=\cbc{(s,i,j):i\in\cM,j\in P_i,s=t_i(j)},\\
	\cI'&=&\cI'(\cM,\cP,\cT)=\cbc{(s,i,j):i\in\cM,j\in P_i',s=t_i'(j)},\\
	\cJ&=&\cJ(\cM,\cP,\cT)=\cbc{(s,i,j):i\in\cM,j\in\brk{k}\setminus(P_i\cup P_i'),s\leq t}.
	\end{eqnarray*}
If $\cE_\cM(\cP,\cT)$ occurs, then the positive literals of each clause $\PHI_i$, $i\in\cM$, are precisely
$\PHI_{ij}$ with $j\in P_i$, which occurs with probability $2^{-k}$ independently.
In addition, we have $\cH_{sij}=1$ for all $(s,i,j)\in\cI$, $\cH_{sij}'=1$ for all $(s,i,j)\in\cI'$, and $\cS_{sij}=1$
for all $(s,i,j)\in\cJ$.
Hence, by \Lem s~\ref{Lemma_danger_prod} and~\ref{Lemma_danger_prod2}
	\begin{eqnarray}\nonumber
	\pr\brk{\cE_\cM(\cP,\cT)}&\leq&
		2^{-k\mu}\cdot\Erw\brk{\prod_{(t,i,j)\in\cI'}\cH_{tij}'\cdot\prod_{(t,i,j)\in\cI}\cH_{tij}\cdot\prod_{(t,i,j)\in\cJ}\cS_{tij}|\cF_0}\\
		&\leq&2^{-k\mu}\cdot\bcfr{3}{n-\theta-3\theta'}^{l'\mu}\bc{n-\theta}^{-(l-l'-1)\mu}\bc{1-1/n}^{(k-l)t\mu}.
		\label{eqnasty2}
	\end{eqnarray}
Combining~(\ref{eqnasty1}) and~(\ref{eqnasty2}), we see that $\pr\brk{\cE_\cM}\leq\lambda^\mu$, where
	$$\lambda=2^{-k}\bink{k}{1,l',l-l'-1}\bcfr{3t'}{n-\theta-3\theta'}^{l'}\bcfr{t}{n-\theta}^{l-l'-1}(1-1/n)^{(k-l)t},$$
whence \Lem~\ref{Lemma_moment} yields
	$\pr\brk{\sum_{i=1}^mX_i>2\lambda m}=o(n^{-3}).$
Thus, the remaining task is to estimate $\lambda m$:
	\begin{eqnarray}\nonumber
	\lambda m&=&mk2^{-k}\bink{k-1}{l'}\bcfr{3t'}{n-\theta-3\theta'}^{l'}\cdot\bink{k-l'-1}{l-l'-1}\bcfr{t}{n-\theta}^{l-l'-1}(1-1/n)^{(k-l)t}\\
		&\leq&\omega n\cdot\bcfr{6\theta'k}{n}^{l'}\cdot\bink{k-l'-1}{l-l'-1}\bcfr{t}{n}^{l-l'-1}(1-t/n)^{k-l}\cdot\eta,\qquad\mbox{where}
				\label{eqnasty4}\\
	\eta&=&\bcfr{n}{n-\theta}^{l-l'-1}\cdot\bcfr{(1-1/n)^t}{1-t/n}^{k-l}\nonumber\\
		&\leq&\bc{1+\frac{\theta}{n-\theta}}^{l-l'-1}\hspace{-3mm}\exp(kt^2/n^2)
		\leq\exp(2\theta l/n+k\theta^2/n^2).\nonumber
	\end{eqnarray}
Since $\theta\leq4k^{-1}n\ln k$ and $l\leq\sqrt{k}$, we have $\eta\leq2$ for large $k$.
Thus, the assertion follows from %~(\ref{eqnasty3}) and
(\ref{eqnasty4}).
\qed\end{proof}

\begin{lemma}\label{Lemma_ugly}
Let $\ln k\leq l\leq k$, $1\leq l'\leq l$, $1\leq t\leq\theta$, and $1\leq t'\leq\theta'$.
For each $i\in\brk{m}$ let $Y_i=1$ if $T\geq t$, $T'\geq t'$, and the following three events occur:
\begin{enumerate}
\item[a.] $\PHI_i$ has exactly $l$ positive literals.
\item[b.] $l'$ of the positive literals of $\PHI_i$ lie in $Z_{t'}'\setminus Z_t$.
\item[c.] $l-l'-1$ of the positive literals of $\PHI_i$ lie in $Z_t$.
\end{enumerate}
Then
	$\pr\brk{\sum_{i=1}^mY_i>n\exp(-l)}=o(n^{-3}).$
\end{lemma}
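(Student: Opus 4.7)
The plan is to mirror the proof of \Lem~\ref{Lemma_nasty}, which goes through here because dropping condition~(d) from that lemma simply removes the ``no negative occurrence of a variable from $Z_t$'' requirement, and with it the $\cS$-factors supplied by \Lem~\ref{Lemma_danger_prod}. So I will apply \Lem~\ref{Lemma_moment} to the $Y_i$, and my first job is to bound $\Erw\brk{\prod_{i\in\cM}Y_i}$ for every $\cM\subset\brk{m}$ of size $\mu=\lceil\ln^2n\rceil$.

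Fix $\cM$ and enumerate the combinatorial data $(\cP,\cT)$ specifying, for each $i\in\cM$: a set $P_i\subset\brk k$ of $l$ positive-literal positions, disjoint subsets $H_i,H_i'\subset P_i$ of sizes $l-l'-1$ and $l'$ pinning down which of these positives lie in $Z_t$ and which in $Z_{t'}'\setminus Z_t$, and time-maps $t_i:H_i\to\brk t$ and $t_i':H_i'\to\brk{t'}$. Let $\cE_\cM(\cP,\cT)$ be the event that $\PHI$ realizes all these specifications. If $Y_i=1$ for all $i\in\cM$, then some $\cE_\cM(\cP,\cT)$ holds. Proceeding exactly as in \Lem~\ref{Lemma_nasty}, I combine the $2^{-k\mu}$ factor from the sign pattern with \Lem~\ref{Lemma_danger_prod} applied to $\cI=\cbc{(t_i(j),i,j):i\in\cM,j\in H_i}$ and \Lem~\ref{Lemma_danger_prod2} applied to $\cI'=\cbc{(t_i'(j),i,j):i\in\cM,j\in H_i'}$ to obtain
$$\pr\brk{\cE_\cM(\cP,\cT)}\le 2^{-k\mu}\bcfr{3}{n-\theta-3\theta'}^{l'\mu}(n-\theta)^{-(l-l'-1)\mu}.$$
A union bound over the $\brk{\bink{k}{1,l',l-l'-1}\,{t'}^{l'}\,t^{l-l'-1}}^\mu$ choices of $(\cP,\cT)$ then yields $\pr\brk{\cE_\cM}\le\lambda^\mu$, where
$$\lambda=2^{-k}\bink{k}{1,l',l-l'-1}\bcfr{3t'}{n-\theta-3\theta'}^{l'}\bcfr{t}{n-\theta}^{l-l'-1}.$$
\Lem~\ref{Lemma_moment} then delivers $\pr\brk{\sum_{i=1}^m Y_i>2\lambda m}\le n^{-10}=o(n^{-3})$, so everything reduces to showing $2\lambda m\le n\exp(-l)$.

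For this last inequality I plug in $m\cdot 2^{-k}k\le\omega n$, the factorisation $\bink{k}{1,l',l-l'-1}=k\bink{k-1}{l'}\bink{k-l'-1}{l-l'-1}$, the crude bound $\bink{a}{b}\le(ea/b)^b$, and the size estimates $t\le\theta\le 4n\ln\omega/k$, $t'\le\theta'\le n\exp(-k^{\eps/16})$. For large $k$ the product $6ek\theta'/n$ is dominated by $\exp(-k^{\eps/17})$, while $2ek\theta/n\le 8e\ln\omega$, and the calculation collapses to
$$2\lambda m\le 2\omega n\cdot\bcfr{\exp(-k^{\eps/17})}{l'}^{l'}\cdot\bcfr{8e\ln\omega}{l-l'-1}^{l-l'-1},$$
with the convention that the last factor equals $1$ when $l-l'-1=0$. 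I then finish by case analysis. If $l-l'-1=0$, the bound reduces to $2\omega n\exp(-(l-1)k^{\eps/17})\le n\exp(-l)$, which holds since $l\ge\ln k$ and $k^{\eps/17}\to\infty$. If $l-l'-1\ge 8e^2\ln\omega$, then $8e\ln\omega/(l-l'-1)\le 1/e$, the second factor is $\le\exp(-(l-l'-1))$, and the surviving expression $2\omega n\exp(-l'k^{\eps/17}-(l-l'-1))$ is $\le n\exp(-l)$ using only $l'\ge 1$ and $k^{\eps/17}\to\infty$.

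The main obstacle is the middle regime $1\le l-l'-1<8e^2\ln\omega$, where the second factor can be bounded only by its maximum $\exp(8\ln\omega)=\omega^8$. Here I exploit the fact that $l-l'-1<8e^2\ln\omega$ together with $l\ge\ln k$ forces $l'>l-8e^2\ln\omega-1$, so that
$$l'k^{\eps/17}-l\ge l(k^{\eps/17}-1)-(8e^2\ln\omega+1)k^{\eps/17}\ge k^{\eps/17}(\ln k-9e^2\ln\omega)-\ln k,$$
and the right-hand side comfortably exceeds $\ln(2\omega^9)$ for large $k$ (since $\ln\omega\le\ln\ln k$ is negligible beside $\ln k$, and $k^{\eps/17}\to\infty$). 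Verifying that the constants line up uniformly in $l$ and $l'$ is the delicate but purely mechanical piece; once this is done, $2\lambda m\le n\exp(-l)$ holds in every case and the proof is complete.
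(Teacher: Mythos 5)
Your proof is correct and takes essentially the same route as the paper's: both fix a set $\cM$ of size $\mu=\lceil\ln^2n\rceil$, decompose the event via the data $(\cP,\cT)$, apply \Lem~\ref{Lemma_danger_prod} and \Lem~\ref{Lemma_danger_prod2} to the index sets $\cI,\cI'$, take a union bound to arrive at $\pr\brk{\cE_\cM}\leq\lambda^\mu$ with the identical $\lambda$, and then invoke \Lem~\ref{Lemma_moment}. The only divergence is purely in the closing arithmetic that $2\lambda m\leq n\exp(-l)$: the paper splits on $l'\geq l/2$ versus $l'<l/2$, while you split on the size of $l-l'-1$ (three cases); both are straightforward calibrations of the same estimate (\ref{equgly4}), so the difference is cosmetic rather than a genuinely different argument.
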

\begin{proof}
The proof is similar to (and less involved than) the proof of \Lem~\ref{Lemma_ugly}.
Set $\mu=\lceil\ln^2n\rceil$ and let $\cM\subset\brk{m}$ be a set of size $\mu$.
Let $\cE_\cM$ be the event that $Y_i=1$ for all $i\in\brk{M}$.
Let $P_i\subset\brk{k}$ be a set of size $l$,
and let $H_i,H_i'\subset P_i$ be disjoint sets such that $|H_i\cup H_i'|=l-1$ and $|H_i'|=l'$ for each $i\in\cM$.
Let $\cP=(P_i,H_i,H_i')_{i\in\cM}$.
Furthermore, let $t_i:H_i\rightarrow\brk{t}$ and $t_i':H_i'\rightarrow\brk{t'}$ for all $i\in\cM$, and set $\cT=(t_i,t_i')_{i\in\cM}$.
Let $\cE_\cM(\cP,\cT)$ be the event that $T\geq t$, $T'\geq t'$, and the following statements are true for all $i\in\cM$:
\begin{enumerate}
\item[a'.] $\PHI_{ij}$ is positive for all $j\in P_i$ and negative for all $j\not\in P_i$.
\item[b'.] $\PHI_{ij}\in Z_{t_i'(j)}'\setminus Z_{t_i'(j)-1}'$ for all $i\in\cM$ and $j\in H_i'$.
\item[c'.] $\PHI_{ij}=z_{t_i(j)}$ for all $i\in\cM$ and $j\in H_{i}$.
\end{enumerate}
If $\cE_\cM$ occurs, then there are $(\cP,\cT)$ such that $\cE_\cM(\cP,\cT)$ occurs.
Using the union bound as in~(\ref{eqnasty1}), we get %obtain the bound
	\begin{eqnarray}\label{equgly1}
	\pr\brk{\cE_\cM}&\leq&\sum_{\cP,\cT}\pr\brk{\cE_\cM(\cP,\cT)}\leq
		\brk{\bink{k}{1,l',l-l'-1}{t'}^{l'}t^{l-l'-1}}^\mu\max_{\cP,\cT}\pr\brk{\cE_\cM(\cP,\cT)}.
	\end{eqnarray}

Hence, we need to bound $\pr\brk{\cE_\cM(\cP,\cT)}$ for any given $\cP,\cT$.
To this end, let
	\begin{eqnarray*}
	\cI&=&\cI(\cM,\cP,\cT)=\cbc{(s,i,j):i\in\cM,j\in P_i,s=t_i(j)},\\
	\cI'&=&\cI'(\cM,\cP,\cT)=\cbc{(s,i,j):i\in\cM,j\in P_i',s=t_i(j)'}.
	\end{eqnarray*}
If $\cE_\cM(\cP,\cT)$ occurs, then the positive literals of each clause $\PHI_i$ are precisely
$\PHI_{ij}$ with $j\in P_i$ ($i\in\cM$).
In addition, $\cH_{sij}'=1$ for all $(s,i,j)\in\cI$ and $\cH_{sij}'=1$ for all $(s,i,j)\in\cI'$.
Hence, by \Lem s~\ref{Lemma_danger_prod} and~\ref{Lemma_danger_prod2}
	\begin{eqnarray}%\nonumber
	\pr\brk{\cE_\cM(\cP,\cT)}&\leq&
		2^{-k\mu}\Erw\brk{\hspace{-1mm}\prod_{(t,i,j)\in\cI'}\hspace{-1mm}\cH_{tij}'\hspace{-1mm}
			\prod_{(t,i,j)\in\cI}\hspace{-1mm}\cH_{tij}|\cF_0}
		\leq\brk{2^{-k}\hspace{-1mm}\bcfr{3}{n-\theta-3\theta'}^{l'}\hspace{-1mm}\bcfr1{n-\theta}^{l-l'-1}}^\mu.
		\label{equgly2}
	\end{eqnarray}
Combining~(\ref{equgly1}) and~(\ref{equgly2}), we see that $\pr\brk{\cE_\cM}\leq\lambda^\mu$, where
	\begin{eqnarray}\nonumber
	\lambda&=&2^{-k}\bink{k}{1,l',l-l'-1}\bcfr{3t'}{n-\theta-3\theta'}^{l'}\bcfr{t}{n-\theta}^{l-l'-1}\\
		&\leq&k2^{-k}\bink{k-1}{l'}\bcfr{3t'}{n-\theta-3\theta'}^{l'}\cdot\bink{k-l'-1}{l-l'-1}\bcfr{t}{n-\theta}^{l-l'-1}\nonumber\\
		&\leq&k2^{-k}\cdot\bcfr{6k\theta'}{n}^{l'}\bcfr{\eul(k-l'-1)\theta}{(l-l'-1)n}^{l-l'-1}.
				\label{equgly4}
	\end{eqnarray}
Invoking \Lem~\ref{Lemma_moment}, we obtain
	$\pr\brk{\sum_{i=1}^mY_i>2\lambda m}=o(n^{-3}).$
Thus, we just need to show that $2\lambda m<\exp(-l)n$.
Since $\theta/n\leq4k^{-1}\ln\omega$ and $\theta'/n<k^{-2}$,
in the case $l'\geq l/2$, (\ref{equgly4}) yields
	$$\lambda m\leq\omega n\bc{4\eul\ln\omega\cdot\theta'/n}^{l'/2}\leq\exp(-l)n/2.$$
Furthermore, if $l'<l/2$, then we obtain from (\ref{equgly4})
	$$
	\lambda m\leq\omega n\exp(-2l')\bc{10\eul\ln\omega/l}^{l-l'-1}\leq\exp(-l)n/2.
	$$
Hence, in either case we obtain the desired bound.
\qed\end{proof}

\noindent\emph{Proof of \Lem~\ref{Lemma_danger2}.}
Let $X(l,l',t,t')$ be the number of indices $i\in\brk{m}$ such that $\PHI_i$ satisfies a.--d.\ from \Lem~\ref{Lemma_nasty}
if $t\leq T$ and $t'\leq T'$, and set $X(l,l',t,t')=0$ if $t>T$ or $t'>T'$.
Let $\cE$ be the event that $T\leq\theta$ and $X(l,l',t,t')\leq B(l,l',t)$ for all $2\leq l\leq\sqrt{k}$, $1\leq l'\leq l-1$,
$t\leq\theta$, and $t'\leq\theta'$.
Then by \Cor~\ref{Cor_T} and \Lem~\ref{Lemma_nasty}
	\begin{eqnarray}\label{eqDang2Pr1}
	\pr\brk{\neg\cE}&\leq&\pr\brk{T>\theta}+k\theta\theta'\cdot o(n^{-3})=o(1).
	\end{eqnarray}
Let $I_l$ be the number of indices $i\in U_{t'}\setminus U_T$ and $\PHI_i$ has precisely $l\leq\sqrt{k}$ positive literals.
If $i$ has these properties, then $i$ satisfies the condition a.--d.\ from \Lem~\ref{Lemma_nasty} for $t=T$ and some $1\leq l'<l$.
Therefore, 
	\begin{equation}\label{eqDang2Pr2}
	|U_{t'}\setminus U_T|\leq\sum_{l=1}^kI_l.
	\end{equation}
If the event $\cE$ occurs, we have
	\begin{eqnarray}
	\sum_{1\leq l\leq\sqrt{k}}I_l&\leq&
		\sum_{1\leq l\leq\sqrt{k}}\sum_{l'=1}^{l-1}X(l,l',T,t')
		\leq\sum_{l=1}^k\sum_{l'=1}^{l-1}B(l,l',T)\nonumber\\
		&\leq&4\omega n\sum_{l'=1}^k\bcfr{6\theta'k}{n}^{l'}\sum_{j=0}^{k-l'-1}\bink{k-l'-1}j\bcfr{T}n^j\bc{1-T/n}^{k-l'-1-j}\nonumber\\
		&=&4\omega n\sum_{l'=1}^k\bcfr{6\theta'k}{n}^{l'}\leq5\omega n\cdot\frac{6\theta'k}{n}\leq n/k^2
			\quad\mbox{[because $\theta'<n/k^4$].}\label{eqDang2Pr3}
	\end{eqnarray}
Furthermore, by \Cor~\ref{Cor_T} and \Lem~\ref{Lemma_ugly} we have
	\begin{equation}\label{eqDang2Pr4}
	\sum_{\sqrt{k}<l\leq k}I_l\leq\sum_{\sqrt{k}<l\leq k}\exp(-l)n\leq n/k^2\qquad\mbox{\whp}
	\end{equation}
Thus, the assertion follows from~(\ref{eqDang2Pr1})--(\ref{eqDang2Pr4}).
\qed

\subsection{Proof of \Cor~\ref{Cor_PlanB2}}\label{Sec_PlanB2}

As a preparation we need to estimate the number of clauses that have contain a huge number of literals
from $Z_t$ for some $t\leq\theta$.

\begin{lemma}\label{Lemma_crude}
Let $t\leq\theta$.
With probability at least $1-o(1/n)$ there are no more than $n\exp(-k)$ indices $i\in\brk{m}$ such that
$\abs{\cbc{j:k_1<j\leq k,\,|\PHI_{ij}|\in Z_t}}\geq k/4$.
\end{lemma}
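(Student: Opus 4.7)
The proof is a first moment argument mimicking the template of \Lem s~\ref{Lemma_danger_aux} and~\ref{Lemma_danger_auxb}, with a sign-insensitive twist. Set $s=\lceil k/4\rceil$ and for $i\in\brk m$ let $X_i=1$ iff $|\{k_1<j\leq k:|\PHI_{ij}|\in Z_t\}|\geq s$; I wish to show that $\pr[\sum_iX_i>n\exp(-k)]=o(n^{-1})$ via a $\mu$-th moment bound with $\mu=\lceil\ln^2n\rceil$. The first step is to introduce a sign-insensitive variant of the indicators~(\ref{eqSH}): set $\cH_{tij}^\pm=1$ iff $\pi_{t-1}(i,j)\in\{-1,+1\}$ and $\pi_t(i,j)\in\{z_t,\bar z_t\}$, and $0$ otherwise. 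Verbatim repetition of the proof of \Lem~\ref{Lemma_danger_prod}---using \Prop~\ref{Prop_card} to assert that the underlying variables $|\PHI_{ij}|$ of unrevealed positions are conditionally uniform over $V\setminus Z_{t-1}$ and mutually independent given $\cF_{t-1}$---then yields
\[
\Erw\Big[\prod_{(t,i,j)\in\cI}\cH_{tij}^\pm\,\Big|\,\cF_0\Big]\leq(n-\theta)^{-|\cI|}\qquad\mbox{for any }\cI\subset\brk\theta\times\brk m\times\brk k.
\]

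Next, if $X_i=1$ there exist $S_i\subset\{k_1+1,\ldots,k\}$ of size $s$ and $\tau_i\colon S_i\to\brk t$ such that $|\PHI_{ij}|=z_{\tau_i(j)}$ for every $j\in S_i$, which (generically) gives $\cH_{\tau_i(j),i,j}^\pm=1$. For $\cM\subset\brk m$ of size $\mu$, summing the displayed bound over the $\bc{\binom{k/2}{s}t^s}^\mu$ choices of $(S_i,\tau_i)_{i\in\cM}$ yields $\Erw[\prod_{i\in\cM}X_i]\leq\lambda^\mu$ with $\lambda\leq\binom{k/2}{s}(t/(n-\theta))^{s}\leq 2^{k/2}(8\ln\omega/k)^{k/4}$, where I used $t\leq\theta\leq 4n\ln\omega/k$ and $n-\theta\geq n/2$ for large $k$. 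Combined with $m\leq 2^k(\ln k)n/k$, an elementary calculation yields
\[
\log_2(\lambda m/n)\leq 9k/4-(k/4)\log_2(k/\ln\omega)+O(\log k),
\]
so $\lambda m\leq n\exp(-k)/4$ for sufficiently large $k$. Reproducing the counting argument from the proof of \Lem~\ref{Lemma_moment} with $L=\lceil n\exp(-k)\rceil$ (noting $L\gg\mu$ for fixed $k$ and $n\to\infty$) then gives
\[
\pr\Big[\textstyle\sum_iX_i\geq L\Big]\leq\binom m\mu\lambda^\mu\Big/\binom L\mu\leq\bcfr{\lambda m}{L-\mu}^{\mu}\leq(1/2)^\mu=o(n^{-1}),
\]
as required.

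The main technical obstacle is the corner case in which a position with $|\PHI_{ij}|=z_{\tau_i(j)}$ has been revealed before time $\tau_i(j)$ via rule~(c) of~{\bf PI4}, making $\cH_{\tau_i(j),i,j}^\pm=0$. Since rule~(c) only fires at positions with $\pi_0(i,j)=1$, any such $\PHI_{ij}$ is then a positive literal in $Z_t$; the extra contribution of clauses with at least $\lceil s/2\rceil$ such positions is handled by a parallel first-moment argument using the indicator $\cH_{tij}$ of~(\ref{eqSH}) in place of $\cH_{tij}^\pm$, and yields a bound of the same order.
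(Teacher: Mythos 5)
Your overall strategy is the same as the paper's: the paper introduces the indicator $\cZ_{sij}$ (=1 iff $|\PHI_{ij}|=z_s$, $\pi_{s-1}(i,j)\in\{-1,1\}$ and $s\leq T$), which is precisely your $\cH_{sij}^\pm$, proves the product bound $\Erw\brk{\prod\cZ_{sij}}\leq(n-\theta)^{-|\cI|}$ via \Lem~\ref{Lemma_filt} and \Prop~\ref{Prop_card}, and then runs a first-moment count over choices of positions and hit-times, feeding the result into \Lem~\ref{Lemma_moment}. Your arithmetic (the $9k/4-(k/4)\log_2(k/\ln\omega)$ estimate) is sound, and redoing the counting step of \Lem~\ref{Lemma_moment} by hand is a clean way to sidestep the requirement $\lambda\geq\delta$ there. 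One small imprecision: the product bound for $\cH^\pm$ (like the paper's for $\cZ$) is only valid for triples with $j>k_1$, because when $j\leq k_1$ and $i=\phi_t$ the chosen variable $z_t$ is \emph{constructed} from $|\PHI_{\phi_t j}|$, so $\cH^\pm_{t\phi_t j}=1$ can be forced; you only ever invoke it with $j>k_1$, so this is a flaw in the statement and not in the use.

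The genuine gap is your corner-case paragraph. First, the proposed repair does not work: a position that was revealed before its entry time via rule~(c) of~{\bf PI4} has $\pi_{s-1}(i,j)\notin\{-1,1\}$, which kills $\cH_{sij}$ just as surely as it kills $\cH^\pm_{sij}$, so switching indicators buys nothing. Second, the split into clauses with $\geq\lceil s/2\rceil$ versus $<\lceil s/2\rceil$ such positions is both unnecessary and misleading: for each clause there can be \emph{at most one} such position. Indeed, if $(i,j)$ is revealed by rule~(c) at time $u<s_j$ then $\PHI_{ij}$ is, at time $u$, the unique positive literal of $\PHI_i$ outside $Z_u$; if $(i,j_1)$ and $(i,j_2)$ with $s_{j_1}<s_{j_2}$ were both of this type, the uniqueness at time $u_1<s_{j_1}$ would force $|\PHI_{ij_2}|\in Z_{u_1}$, i.e.\ $s_{j_2}\leq u_1<s_{j_1}$, a contradiction (the degenerate case $\PHI_{ij_1}=\PHI_{ij_2}$ is absorbed by \Lem~\ref{Lemma_double}). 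The paper exploits exactly this by choosing $\kappa-1$ positions out of the $\geq\kappa$ available ones. Your proof works once you make the same replacement of $s$ by $s-1$ and delete the corner-case paragraph; the change worsens $\lambda$ by only a factor $O(k/\ln\omega)$, which the slack in your final estimate easily absorbs.
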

\begin{proof}
For any $i\in\brk{m}$, $j\in\brk{k}$, and $1\leq s\leq\theta$ let
	$$\cZ_{sij}=\left\{\begin{array}{cl}
			1&\mbox{ if $|\PHI_{ij}|=z_s$, $\pi_{s-1}(i,j)\in\{-1,1\}$, and $s\leq T$},\\
			0&\mbox{ otherwise.}
			\end{array}\right.$$
Then for any set $\cI\subset\brk{t}\times\brk{m}\times(\brk{k}\setminus\brk{k_1})$ we have
	\begin{equation}\label{eqcrude1}
	\Erw\brk{\prod_{(s,i,j)\in\cI}\cZ_{sij}}\leq(n-\theta)^{-|\cI|}.
	\end{equation}
To see this, let $\cI_s=\{(i,j):(s,i,j)\in\cI\}$ and set $\cZ_s=\prod_{(i,j)\in\cI_s}\cZ_{sij}$.
Then  for all $s\leq\theta$ the random variable $\cZ_s$ is $\cF_s$-measurable by Fact~\ref{Fact_messbar}.
Moreover, we claim that
	\begin{equation}\label{eqcrude2}
	\Erw\brk{\cZ_s|\cF_{s-1}}\leq(n-\theta)^{-|\cI_s|}
	\end{equation}
for any $s\leq\theta$.
To prove this, consider any formula $\Phi$ such that
$s\leq T\brk{\Phi}$ and $\pi_{s-1}(i,j)\brk{\Phi}\in\{-1,1\}$ for all $(i,j)\in\cI_s$.
Then by \Prop~\ref{Prop_card} in the probability distribution $\pr\brk{\cdot|\cF_{s-1}}(\Phi)$ the variables $(\PHI_{ij})_{(i,j)\in\cI_s}$
are mutually independent and uniformly distributed over $V\setminus Z_{s-1}$.
They are also independent of the variable $z_s$, because $j>k_1$ for all $(i,j)\in\cI_s$ and the variable $z_s$ is determined
by the first $k_1$ literals of some clause $\phi_s$ (cf.~{\bf PI2}).
Therefore, for all $(i,j)\in\cI_s$ the event $\PHI_{ij}=z_s$ occurs with probability $1/|V\setminus Z_{s-1}|$ independently.
As $|Z_{s-1}|=s-1$, this shows~(\ref{eqcrude2}), and (\ref{eqcrude1}) follows from \Lem~\ref{Lemma_filt} and~(\ref{eqcrude2}).

Let $X_i=1$ if $t\leq T$ and there are at least $\kappa=\lceil k/4\rceil$ indices $j\in\brk{k}\setminus\brk{k_1}$ such that
$|\PHI_{ij}|\in Z_t$, and set $X_i=0$ otherwise.
Let $\cM\subset\brk{m}$ be a set of size $\mu=\lceil\ln^2n\rceil$ and let $\cE_\cM$ be the event that $X_i=1$ for
all $i\in\cM$.
Furthermore, let $P_i\subset\brk{k}\setminus\brk{k_1}$ be a set of size $\kappa-1$ for each $i\in\cM$,
and let $t_i:P_i\rightarrow\brk{t}$ be a map.
Let $\cP=(P_i)_{i\in\cM}$ and $\cT=(t_i)_{i\in\cM}$, and let $\cE_{\cM}(\cP,\cT)$ be the event
that $t\leq T$ and $\cZ_{t_i(j)ij}=1$ for all $i\in\cM$ and all $j\in P_i$.
Let
	$$\cI=\cI_\cM(\cP,\cT)=\{(t_i(j),i,j):i\in\cM,j\in P_i\}.$$
Then~(\ref{eqcrude1}) entails that for any $\cP,\cT$
	\begin{equation}
	\pr\brk{\cE_\cM(\cP,\cT)}\leq\Erw\brk{\prod_{(s,i,j)\in\cI}\cZ_{sij}}\leq(n-\theta)^{-|\cI|}
		\leq(n-\theta)^{-\mu(\kappa-1)}.
	\end{equation}
Moreover, if $\cE_\cM$ occurs, then there exist $\cP,\cT$ such that $\cE_\cM(\cP,\cT)$ occurs.
Hence, by the union bound
	\begin{eqnarray*}
	\pr\brk{\cE_\cM}&\leq&\sum_{\cP,\cT}\pr\brk{\cE_\cM(\cP,\cT)}\leq\lambda^\mu\qquad\mbox{where}\\
	\lambda&=&\bink{k-k_1}{\kappa-1}t^{\kappa-1} (n-\theta)^{1-\kappa}\leq\bcfr{\eul k t}{(\kappa-1)(n-\theta)}^{\kappa-1}
		\leq(12\theta/n)^{\kappa-1}.
	\end{eqnarray*}
Finally, \Lem~\ref{Lemma_moment} implies that with probability $1-o(n^{-1})$ we have
	$$\sum_{i=1}^mX_i\leq 2m\lambda\leq n\cdot2^k(12\theta/n)^{\kappa-1}\leq n\exp(-k),$$
as desired.
\qed\end{proof}

\noindent\emph{Proof of \Cor~\ref{Cor_PlanB2}.}
We use a similar argument as in the proof of \Cor~\ref{Cor_PlanB}.
Let 
	$$\cU_t'=\abs{\cbc{x\in V\setminus(Z_T\cup Z_t'):U_t'(x)=0}},$$
set $\alpha=\eps/3$, and define $0/1$ random variables $\cB_t'$ for $t\geq1$ by letting $\cB_t'=1$ iff the following statements hold:
\begin{enumerate}
\item[a.] $T'\geq t$.
\item[b.] $\cU_{t-1}'\geq n k^{\alpha-1}$.
\item[c.] There are less than $k/4$ indices $k_1<j\leq k$ such that
	 $|\PHI_{\psi_t j}|\in Z_T$.
\item[d.] There is $z\in Z_t'\setminus Z_{t-1}'$ such that $U_{t-1}'(z)>0$.
\end{enumerate}
This random variable is $\cF_t'$-measurable by Fact~\ref{Fact_messbar2}.
Let $\delta=\exp(-k^{\alpha}/6)$.
We claim
	\begin{equation}\label{eqPlanB1}
	\Erw\brk{\cB_t'|\cF_{t-1}}\leq\delta\qquad\mbox{ for any }t\geq1.
	\end{equation}
To see this, let $\Phi$ be a formula for which a.--c.\ hold.
We condition on the event $\PHI\equiv_{t-1}'\Phi$.
Then at time $t$ the process {\bf PI1'}--{\bf PI4'} chooses $\psi_t$
such that $\PHI_{\psi_t}$ contains less than three variables from $Z_{t-1}'$.
Since $\Phi$ satisfies c.,
there are less than $k/4$ indices $j>k_1$ such that $|\PHI_{\psi_t j}|\in Z_T$.
Further, since $\PHI_{\psi_t}$ is $(Z_T,Z_{t-1}')$-endangered, there is no $j$ such that $\pi_{t-1}'(\psi_t,j)=1$.
Consequently, there are at least $\frac34k-k_1-6\geq k/5$ indices $k_1<j\leq k-5$ such that $\pi_{t-1}'(\psi_t,j)=-1$.
Let $\cJ$ be the set of all these indices.
Then \Prop~\ref{Prop_card2} entails that in the distribution $\pr\brk{\cdot|\cF_{t-1}'}(\Phi)$
the variables $(|\PHI_{\psi_t j}|)_{j\in\cJ}$ are mutually independent and uniformly distributed
over $V\setminus(Z_T\cup Z_{t-1}')$.
Therefore, the number of indices $j\in\cJ$ such that $U_{t-1}'(|\PHI_{\psi_t j}|)=0$ has a binomial distribution
	$\Bin(|\cJ|,|\cU_{t-1}'|/|V\setminus(Z_T\cup Z_{t-1}')|)$.
If d.\ occurs, then there
are less than three indices $j\in\cJ$ such that $U_{t-1}'(|\Phi_{\psi_tj}|)=0$.
Since $|\cJ|\geq k/5$, b.\ and the Chernoff bound~(\ref{eqChernoff}) yield
	\begin{eqnarray*}\label{eqPlanB2}
	\Erw\brk{\cB_t'|\cF_{t-1}'}(\Phi)&\leq&\pr\brk{\Bin(|\cJ|,|\cU_{t-1}'|/|V\setminus(Z_T\cup Z_{t-1}')|)<3}\\
		&\leq&\pr\brk{\Bin\bc{\lceil k/5\rceil,k^{\alpha-1}}<3}\leq\delta
	\end{eqnarray*}
(provided that $k$ is sufficiently large).
Thus, we have established~(\ref{eqPlanB1}).

Let $\cY'=\abs{\cbc{t\in\brk{\theta'}:\cB_t'=1}}$.
We are going to show that
	\begin{equation}\label{eqPlanB2a}
	\cY'\leq2\theta'\delta\quad\mbox{\whp}
	\end{equation}
To this end, letting $\mu=\lceil\ln n\rceil$, we will show that
	\begin{equation}\label{eqPlanB3a}
	\Erw\brk{(\cY')_\mu}\leq(\theta'\delta)^\mu
			\qquad\mbox{where }(\cY')_\mu=\prod_{j=0}^{\mu-1}\cY'-j.
	\end{equation}
This implies~(\ref{eqPlanB2a}).
For if $\cY'>2\theta'\delta$, then
for large $n$ we have $(X'')_\mu>(2\theta'\delta-\mu)^\mu\geq(1.9\cdot\theta'\delta)^\mu$, whence
Markov's inequality entails
	$\pr\brk{\cY'>2\theta'\delta}\leq\pr\brk{(\cY')_\mu>(1.9\theta'\delta)^\mu}\leq1.9^{-\mu}
			=o(1).$

In order to establish~(\ref{eqPlanB3a}), we define a random variable $\cY'_\cT$ for any tuple
$\cT=(t_1,\ldots,t_\mu)$
of mutually distinct integers
$t_1,\ldots,t_\mu\in\brk\theta'$ by letting
	$\cY'_\cT=\prod_{i=1}^\mu\cB_{t_i}'$.
Since $(\cY')_\mu$ equals the number of $\mu$-tuples $\cT$ such that $\cY'_\cT=1$,
we obtain
	\begin{equation}\label{eqPlanB4}
	\Erw\brk{(\cY')_\mu}\leq\sum_\cT\Erw\brk{\cY'_\cT}\leq{\theta'}^\mu\max_\cT\Erw\brk{\cY'_\cT}.
	\end{equation}
To bound the last expression, we may assume that $\cT$ is such that $t_1<\cdots<t_\mu$.
As $\cB_t'$ is $\cF_t'$-measurable, we have for all $l\leq\mu$
	\begin{eqnarray*}
	\Erw\brk{\prod_{i=1}^l\cB_{t_i}'}&\leq&\Erw\brk{\Erw\brk{\prod_{i=1}^l\cB_{t_i}'|\cF'_{t_l-1}}}
		=\Erw\brk{\prod_{i=1}^{l-1}\cB_{t_i}'\cdot\Erw\brk{\cB_{t_l}'|\cF_{t_l-1}'}}
				\stacksign{(\ref{eqPlanB1})}{\leq}\;\delta\cdot\Erw\brk{\prod_{i=1}^{l-1}\cB_{t_i}'}.
	\end{eqnarray*}
Proceeding inductively from $l=\mu$ down to $l=1$, we obtain $\Erw\brk{\cY'_\cT}\leq\delta^\mu$,
and thus~(\ref{eqPlanB3a}) follows from~(\ref{eqPlanB4}).

To complete the proof, 
let $\cY''$ be the number of indices $i\in\brk{m}$ such that $|\PHI_{ij}|\in Z_T$ for at least $k/4$ indices $k_1<j\leq k$.
Combining \Cor~\ref{Cor_T} (which shows that $|Z_T|=T\leq\theta$ \whp) with \Lem~\ref{Lemma_crude},
we see that $\cY''\leq n\exp(-k)\leq\theta\delta$ \whp\
As $|\cY|\leq\cY'+\cY''$, the assertion thus follows from~(\ref{eqPlanB2a}).
\qed

\subsection{Proof of \Cor~\ref{Cor_unsat}}\label{Sec_unsat}

Recall that a clause $\PHI_i$ is $(Z_T,Z_t')$-endangered if for any $j$ such that
the literal $\PHI_{ij}$ is true under $\sigma_{Z_T}$ the underlying variable $|\PHI_{ij}|$ lies in $Z_t'$.
Let $\cY$ be the set from \Cor~\ref{Cor_PlanB2}, and let $\cZ=\bigcup_{s\in\cY}Z_s\setminus Z_{s-1}$.
We claim that if $\PHI_i$ is $(Z_T,Z_t')$-endangered, then one of the following statements is true:
\begin{enumerate}
\item[a.] There are two indices $1\leq j_1<j_2\leq k$ such that $|\PHI_{ij_1}|=|\PHI_{ij_2}|$.
\item[b.] There are indices $i'\neq i$, $j_1\neq j_2$, $j_1'\neq j_2'$ such that
			$|\PHI_{ij_1}|=|\PHI_{i'j_1'}|$ and $|\PHI_{ij_2}|=|\PHI_{i'j_2'}|$.
\item[c.] $\PHI_i$ is unsatisfied under $\sigma_{Z_T}$.
\item[d.] $\PHI_i$ contains more than $\kappa=\lfloor\sqrt{k}\rfloor$ positive literals, all of which lie in $Z_t'\cup Z_T$.
\item[e.] $\PHI_i$ has at most $\kappa$ positive literals, is satisfied under $\sigma_{Z_T}$,
			and contains a variable from $\cZ$.
\end{enumerate}
To see this, assume that $\PHI_i$ is $(Z_T,Z_t')$-endangered for some $t\leq T'$ and a.--d.\ do not hold.
Also observe that $\cZ\supset Z_T\cap Z_t'$ by construction (cf.~{\bf PI2'});
hence, if there is an index $j$ such that $\PHI_{ij}=\bar x$ for some $x\in Z_T$, then $x\in\cZ$, and thus e.\ holds.
Thus, assume that no variable from $Z_T$ occurs negatively in $\PHI_i$.
Then $\PHI_i$ contains $l\geq1$ positive literals from $V\setminus Z_T$, and we may assume without loss of generality
that these are just the first $l$ literals $\PHI_{i1},\ldots,\PHI_{il}$.
Furthermore, $\PHI_{i1},\ldots,\PHI_{il}\in Z_t'$.
Hence, for each $1\leq j\leq l$ there is $1\leq t_j\leq t$ such that $\PHI_{ij}\in Z_{t_j}'\setminus Z_{t_j-1}'$.
Since $\PHI_i$ satisfies neither a.\ nor b., the numbers $t_1,\ldots,t_l$ are mutually distinct.
(For if, say, $t_1=t_2$, then either $\PHI_{i1}=\PHI_{i2}$, or $\PHI_i$ and $\PHI_{\psi_{t_1}}$ have at least
two variables in common.)
Thus, we may assume without loss of generality that $t_1<\cdots<t_l$.
Then $i\in U_{t_l-1}'$ by the construction in step {\bf PI3'}, and thus $\PHI_{il}\in\cZ$. Hence, e.\ holds.

Let $X_a,\ldots,X_e$ be the numbers of indices $i\in\brk{m}$ for which a.,\ldots,e.\ above hold.
\Whp\ $X_a+X_b=O(\ln n)$ by \Lem~\ref{Lemma_double}.
Furthermore, $X_c\leq\exp(-k^{\eps/8})n$ \whp\ by \Prop~\ref{Prop_process}.
Moreover, \Lem s~\ref{Lemma_danger_aux} and~\ref{Lemma_ugly} yield
	$X_d\leq2\exp(-\kappa/2)n$ \whp\
Finally, since $\cY\leq3\theta'\exp(-k^{\eps/4})$ \whp\ by \Cor~\ref{Cor_PlanB2} and as $|\cZ|=3|\cY|$,
\Lem~\ref{Lemma_firstMoment} shows that \whp
	$$X_e\leq\sqrt{\theta'\cdot9\exp(-k^{\eps/4})n}<\theta'/2.$$
Combining these estimates, we obtain $X_a+\cdots+X_e\leq\theta'$ \whp

\section{Proof of \Prop~\ref{Prop_matching}}\label{Sec_matching}

As before, we let $0<\eps<0.1$,
and we assume that $k\geq k_0$ for a large enough $k_0=k_0(\eps)$, and that $n>n_0$ for some large 
enough $n_0=n_0(\eps,k)$.
Furthermore, we let $m=\lfloor(1-\eps)2^kk^{-1}\ln k\rfloor$,
	$\omega=(1-\eps)\ln k\mbox{ and }k_1=\lceil k/2\rceil.$
We keep the notation introduced in \Sec~\ref{Sec_process_outline}.
In particular, recall that $\theta=\lfloor4nk^{-1}\ln\omega\rfloor$.

In order to prove that the graph $G(\PHI,Z,Z')$ has a matching that covers all $(Z,Z')$-endangered clauses,
we are going to apply the marriage theorem.
Basically we are going to argue as follows.
Let $Y\subset Z'$ be a set of variables.
Since $Z'$ is ``small'' by \Prop~\ref{Prop_phase2}, $Y$ is small, too.
Furthermore, Phase~2 ensures that any $(Z,Z')$-endangered clause contains three variables from $Z'$.
To apply the marriage theorem, we thus need to show that \whp\ for any $Y\subset Z'$
the number of $(Z,Z')$-endangered clauses that contain only variables from $Y\cup(V\setminus Z')$
(i.e., the set of all $(Z,Z')$-endangered clauses whose neighborhood in $G(\PHI,Z,Z')$ is a subset of $Y$)
is at most $|Y|$.

To establish this, we will use a first moment argument (over sets $Y$).
This argument does actually not take into account that $Y\subset Z'$, but it works
for any ``small'' set $Y\subset V$.
Thus, let $Y\subset V$ be a set of size $yn$.
We define a family $(y_{ij})_{i\in\brk{m},j\in\brk{k}}$ of random variables by letting
	$$y_{ij}=\left\{\begin{array}{cl}
		1&\mbox{ if }|\PHI_{ij}|\in Y,\\
		0&\mbox{ otherwise.}
		\end{array}\right.$$
Moreover, define for each integer $t\geq0$
an equivalence relation $\equiv_t^Y$ on $\Omega_k(n,m)$ by letting
	$\Phi\equiv_t^Y\Phi'$ iff $\pi_s\brk\Phi=\pi_s\brk{\Phi'}$ for all $0\leq s\leq t$ \emph{and}
		$y_{ij}\brk{\Phi}=y_{ij}\brk{\Phi'}$ for all $(i,j)\in\brk m\times\brk k$.
This is a refinement of the equivalence relation $\equiv_t$ from \Sec~\ref{Sec_process_outline}.
Let $\cF_t^Y$ be the $\sigma$-algebra generated by the equivalence classes of $\equiv_t^Y$.
Then the family $(\cF_t^Y)_{t\geq0}$ is a filtration.
Since $\cF_t^Y$ contains the $\sigma$-algebra $\cF_t$ from \Sec~\ref{Sec_process_outline},
all random variables that are $\cF_t$-measurable are $\cF_t^Y$-measurable as well.

\begin{proposition}\label{Prop_card_Y}
Let $\cE_t^Y$ be the set of all pairs $(i,j)$ such that
	$\pi_t(i,j)\in\{1,-1\}$ and $y_{ij}=0$.
The conditional joint distribution of the variables $(|\PHI_{ij}|)_{(i,j)\in\cE_t^Y}$ given $\cF_t^Y$ is uniform over
	$(V\setminus(Z_t\cup Y))^{\cE_t^Y}$.
\end{proposition}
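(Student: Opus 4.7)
The plan is to mimic the proof of Proposition~\ref{Prop_card}, incorporating the additional information carried by the indicator variables $y_{ij}$. First, since $\pr$ is uniform on $\Omega_k(n,m)$, the conditional distribution $\pr[\cdot \mid \cF_t^Y](\Phi)$ is uniform over the $\equiv_t^Y$-equivalence class $[\Phi]_t^Y$. Thus it suffices to show that the number of formulas $\Phi' \in [\Phi]_t^Y$ realizing any prescribed assignment $f: \cE_t^Y[\Phi] \to V \setminus (Z_t[\Phi] \cup Y)$ at the unexposed non-$Y$ positions is independent of $f$.

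To establish this, I would construct a bijection analogous to the one used in Proposition~\ref{Prop_card}. Let $\bar\cE_t^Y[\Phi]$ denote the set of pairs $(i,j)$ with $\pi_t(i,j) \in \{-1,1\}$ and $y_{ij}[\Phi] = 1$. Observe that $\pi_t(i,j) \in \{\pm 1\}$ forces $|\Phi_{ij}| \notin Z_t[\Phi]$, for otherwise step {\bf PI4} would have promoted that entry to a literal in $V \cup \bar V$. Hence the variable at a position in $\cE_t^Y[\Phi]$ lies in $V \setminus (Z_t[\Phi] \cup Y)$, and at a position in $\bar\cE_t^Y[\Phi]$ it lies in $Y \setminus Z_t[\Phi]$. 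Given any pair $(f,g)$ with $f : \cE_t^Y[\Phi] \to V \setminus (Z_t[\Phi] \cup Y)$ and $g : \bar\cE_t^Y[\Phi] \to Y \setminus Z_t[\Phi]$, define $\Phi_{f,g}$ by substituting $f(i,j)$ or $g(i,j)$ as the underlying variable at each such position (retaining the sign recorded by $\pi_0$) and leaving all other positions untouched.

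The crux is to verify that $\Phi_{f,g} \equiv_t^Y \Phi$. The $y_{ij}$-values are preserved by construction, since positions with $y_{ij} = 0$ receive variables outside $Y$ and positions with $y_{ij} = 1$ receive variables in $Y$. For the $\pi_s$-histories, the same reasoning as for Proposition~\ref{Prop_card} applies: at each step $s \leq t$, the process {\bf PI1}--{\bf PI4} depends only on the signs at positions where $\pi_{s-1} \in \{\pm 1\}$ and on whether the underlying variables lie in $Z_s \subseteq Z_t[\Phi]$; both features are untouched by the substitution, as $(f,g)$ moves variables around only within $V \setminus Z_t[\Phi]$.

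Consequently, the map $(f,g) \mapsto \Phi_{f,g}$ is a bijection from $(V \setminus (Z_t[\Phi] \cup Y))^{\cE_t^Y[\Phi]} \times (Y \setminus Z_t[\Phi])^{\bar\cE_t^Y[\Phi]}$ onto $[\Phi]_t^Y$. For any fixed $f$ the number of preimages equals $|Y \setminus Z_t[\Phi]|^{|\bar\cE_t^Y[\Phi]|}$, which is independent of $f$. Hence the marginal distribution of $(|\PHI_{ij}|)_{(i,j) \in \cE_t^Y}$ given $\cF_t^Y$ is uniform on $(V \setminus (Z_t \cup Y))^{\cE_t^Y}$. The main obstacle is the verification that the substitution preserves the $\pi_s$ history; but since the substitution is confined to variables outside $Z_t$, and since the process {\bf PI1}--{\bf PI4} never inspects membership in $Y$, this amounts to reprising the invariance argument already used for Proposition~\ref{Prop_card}.
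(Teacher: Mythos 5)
Your proof is correct and follows essentially the same route as the paper's: you introduce the complementary set of unexposed $Y$-positions (the paper calls it $\cD_t^Y$, you call it $\bar\cE_t^Y$), build the bijection $(f,g)\mapsto\Phi_{f,g}$ onto the $\equiv_t^Y$-class, and marginalize out $g$. Your verification that the substitution preserves the $\pi_s$-history and the $y_{ij}$-values is the same invariance argument the paper invokes by reference to Proposition~\ref{Prop_card}, spelled out in slightly more detail.
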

\begin{proof}
Let $\brk{\Phi}_t^Y$ be the $\equiv_t^Y$-class of a formula $\Phi$.
Then $\pr_\Phi=\pr\brk{\cdot|\cF_t^Y}(\Phi)$
is just the uniform distribution over $\brk{\Phi}_t^Y$.
Let $\cD_t^Y(\Phi)$ be the set of all pairs $(i,j)\in\brk{m}\times k$ such that $|\Phi_{ij}|\in Y$ and $\pi_t(i,j)\brk{\Phi}\in\{-1,1\}$.
We will actually prove the following stronger statement: with respect to the measure $\pr_\Phi$
	the joint distribution of the variables $(|\PHI_{ij}|)_{(i,j)\in\cE_t^Y\cup \cD_t^Y}$ is uniform over
	$(V\setminus(Z_t\cup Y))^{\cE_t^Y}\times(Y\setminus Z_t)^{\cD_t}$.	

To show this, we use a similar argument as in the proof of \Prop~\ref{Prop_card}.
For any two maps $f:\cE_t^Y(\Phi)\rightarrow V\setminus(Y\cup Z_t(\Phi))$ and
	$g:\cD_t^Y(\Phi)\rightarrow Y\setminus Z_t(\Phi)$
we define a formula
	$$(\Phi_{f,g})_{ij}=\left\{\begin{array}{cl}
		\overline{f(i,j)}&\mbox{ if $(i,j)\in\cE_t(\Phi)$ and $\pi_0(i,j)=-1$},\\
			f(i,j)&\mbox{ if $(i,j)\in\cE_t(\Phi)$ and $\pi_0(i,j)=1$},\\
		\overline{g(i,j)}&\mbox{ if $(i,j)\in\cD_t(\Phi)$ and $\pi_0(i,j)=-1$},\\
			g(i,j)&\mbox{ if $(i,j)\in\cD_t(\Phi)$ and $\pi_0(i,j)=1$},\\
				\Phi_{ij}&\mbox{ otherwise.}
			\end{array}\right.$$
Then $\Phi_{f,g}\equiv_t^Y\Phi$.
Therefore, the map
	$$(V\setminus(Z_t\cup Y))^{\cE_t^Y}\times(Y\setminus Z_t)^{\cD_t^Y}\rightarrow\brk{\Phi}_t,\ (f,g)\mapsto\Phi_{f,g}$$
is %a measure-preserving
bijection.
\qed\end{proof}

For any $t\geq1$, $i\in\brk m$, $j\in\brk k$
we define a $0/1$ random variable $\cH_{tij}^Y$
by letting $\cH_{tij}^Y=1$ if $y_{ij}=0$, $t\leq T$, $\pi_{t-1}(i,j)=1$ and $\pi_t(i,j)=z_t$.

\begin{lemma}\label{Lemma_match_prod}
For any set $\cI\subset\brk{\theta}\times\brk{m}\times\brk{k}$ we have
	$\Erw\brk{\prod_{(t,i,j)\in\cI}\cH_{tij}^Y|\cF_0^Y}\leq(n-\theta)^{-|\cI|}.$
\end{lemma}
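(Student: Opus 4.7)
The plan is to mirror the proof of \Lem~\ref{Lemma_danger_prod} with the finer filtration $(\cF_t^Y)$ and \Prop~\ref{Prop_card_Y} in place of $(\cF_t)$ and \Prop~\ref{Prop_card}. Set $\cI_t=\cbc{(i,j):(t,i,j)\in\cI}$ and $X_t=\prod_{(i,j)\in\cI_t}\cH_{tij}^Y$. Since $\cH_{tij}^Y$ depends only on $y_{ij}$, $T$, $\pi_{t-1}$, $\pi_t$ and $z_t$, each $X_t$ is $\cF_t^Y$-measurable, and \Lem~\ref{Lemma_filt} applied to $(\cF_t^Y)_{t\geq 0}$ reduces the claim to the per-step bound
\[
\Erw\brk{X_t\mid\cF_{t-1}^Y}\leq(n-\theta)^{-|\cI_t|}\qquad\mbox{for every }1\leq t\leq\theta.
\]

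To establish this, observe that $X_t=1$ forces $t\leq T$ and, for every $(i,j)\in\cI_t$, both $\pi_{t-1}(i,j)=1$ and $y_{ij}=0$; fix a formula $\Phi$ meeting these conditions and condition on $\PHI\equiv_{t-1}^Y\Phi$. Exactly as in the proof of \Lem~\ref{Lemma_danger_prod}, each clause $\PHI_i$ with $(i,j)\in\cI_t$ carries a positive literal under $\pi_{t-1}$ while $\PHI_{\phi_t}$ is all-negative, so $\phi_t\neq i$ and the indices $(\phi_t,j')$ lie outside $\cI_t$. Applying \Prop~\ref{Prop_card_Y}, the variables $(|\PHI_{ij}|)_{(i,j)\in\cI_t}$ are mutually independent and uniform over $V\setminus(Z_{t-1}\cup Y)$ in the measure $\pr\brk{\cdot\mid\cF_{t-1}^Y}(\Phi)$; in particular, being indexed by positions outside clause $\phi_t$, they are independent of the literals of $\PHI_{\phi_t}$ from which $z_t$ is chosen.

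The event $X_t=1$ then requires $|\PHI_{ij}|=z_t$ for every $(i,j)\in\cI_t$. Since $y_{ij}=0$ forces $|\PHI_{ij}|\notin Y$, this is impossible when $z_t\in Y$, while conditionally on any fixed value $z_t=v\in V\setminus(Z_{t-1}\cup Y)$ the events $|\PHI_{ij}|=v$ occur independently, each with probability at most $(n-\theta)^{-1}$. Averaging over $z_t$ yields $\Erw\brk{X_t\mid\cF_{t-1}^Y}(\Phi)\leq(n-\theta)^{-|\cI_t|}$, which closes the induction. The whole argument is conceptually a line-by-line transcription of \Lem~\ref{Lemma_danger_prod}; the only genuinely new ingredient is the appeal to \Prop~\ref{Prop_card_Y}, which certifies that the extra information carried by $\cF_t^Y$ (namely the membership indicators $y_{ij}$ at positions where the literal is still unexposed) leaves the remaining unexposed non-$Y$ variables uniform and independent over $V\setminus(Z_t\cup Y)$, so that the counting estimate still goes through verbatim.
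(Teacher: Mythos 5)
Your proof is correct and is precisely the argument the paper has in mind: the paper's own proof is the one-liner ``Due to \Prop~\ref{Prop_card_Y} the proof of \Lem~\ref{Lemma_danger_prod} carries over directly,'' and you have faithfully spelled out that transcription, including the one genuinely new point (that the per-step event forces $z_t\notin Y$, since $y_{ij}=0$ puts $|\PHI_{ij}|$ outside $Y$). One shared imprecision inherited from the paper's terseness: the per-event probability is $|V\setminus(Z_{t-1}\cup Y)|^{-1}$, which is only $\leq(n-\theta)^{-1}$ when $|Y\cup Z_{t-1}|\leq\theta$, so strictly the bound should read $(n-\theta-|Y|)^{-|\cI|}$; this is harmless in the lemma's only application, where $|Y|\leq nk^{-12}$ and the factor-of-two slack already built into \Lem~\ref{Lemma_Ymulti} absorbs the discrepancy.
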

\begin{proof}
Due to \Prop~\ref{Prop_card_Y} the proof of \Lem~\ref{Lemma_danger_prod} carries over directly.
\qed\end{proof}

For a given set $Y$ we would like to bound the number of $i\in\brk{m}$
such that $\PHI_i$ contains at least three variables from $Y$
and $\PHI_i$ has no positive literal in $V\setminus(Y\cup Z_T)$.
If for any ``small'' set $Y$ the number of such clauses is less than $|Y|$,
then we can apply this result to $Y=Z'$ and use the marriage theorem to
show that $G(\PHI,Z,Z')$ has the desired matching.
We proceed in several steps.

\begin{lemma}\label{Lemma_Ymulti}
Let $t\leq\theta$, let $\cM\subset\brk{m}$ be a set of size $\mu$, and let $L$, $\Lambda$
 be maps that assign a subset of $\brk{k}$ to each $i\in\cM$ such that
 	\begin{equation}\label{eqLLambda}
	\mbox{$L(i)\cap\Lambda(i)=\emptyset$ and $|\Lambda(i)|\geq 3$ for all $i\in\cM$.}
	\end{equation}
Let $\cE(Y,t,\cM,L,\Lambda)$ be the event that the following statements are true for all $i\in\cM$:
\begin{enumerate}
\item[a.] $|\PHI_{ij}|\in Y$ for all $j\in\Lambda(i)$.
\item[b.] $\PHI_{ij}$ is a negative literal for all $j\in\brk{k}\setminus(L(i)\cup\Lambda(i))$.
\item[c.] $\PHI_{ij}\in Z_t\setminus Y$ for all $j\in L(i)$.
\end{enumerate}
Let $l=\sum_{i\in\cM}|L(i)|$ and $\lambda=\sum_{i\in\cM}|\Lambda(i)|$.
Then $\pr\brk{\cE(Y,t,\cM,L,\Lambda)}\leq2^{-k\mu}(2t/n)^l(2y)^\lambda$.
\end{lemma}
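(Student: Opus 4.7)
The plan is to mirror the structure of the proofs of \Lem s~\ref{Lemma_danger_aux} and~\ref{Lemma_ugly}: decompose the event $\cE=\cE(Y,t,\cM,L,\Lambda)$ into a sign pattern, a $Y$-membership pattern on the $\Lambda$-positions, and a $Z_t$-absorption pattern on the $L$-positions; union-bound over the times at which each $L$-position is absorbed into $Z$; and then invoke \Lem~\ref{Lemma_HHS}.

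Concretely, for each family $\tau=(\tau_i)_{i\in\cM}$ with $\tau_i\colon L(i)\to\brk t$, let $\cE_\tau$ denote the event that, for every $i\in\cM$, the following three conditions hold: (s)~$\PHI_{ij}$ is positive for $j\in L(i)$ and negative for $j\in\brk k\setminus(L(i)\cup\Lambda(i))$; (y)~$|\PHI_{ij}|\in Y$ for every $j\in\Lambda(i)$; and (z)~$\cH^Y_{\tau_i(j),i,j}=1$ for every $j\in L(i)$. Since $\PHI_{ij}\in Z_t\setminus Y$ forces $\pi_0(i,j)=1$, $y_{ij}=0$, and $|\PHI_{ij}|=z_s$ for some $s\leq t$, choosing $\tau_i(j)$ to be the (unique) step at which $\pi_{\cdot}(i,j)$ first departs~$\{1\}$ yields an index for which $\cH^Y_{\tau_i(j),i,j}=1$; hence $\cE\subseteq\bigcup_\tau\cE_\tau$, where the union runs over at most $t^l$ families.

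To bound $\pr[\cE_\tau]$ for a fixed $\tau$, I would estimate the three ingredients separately. Condition~(s) prescribes signs at $k-|\Lambda(i)|$ positions per clause, each independently with probability $1/2$, so $\pr[(s)]=2^{-\sum_i(k-|\Lambda(i)|)}=2^{\lambda-k\mu}$. Since signs and underlying variables are a~priori independent and each $|\PHI_{ij}|$ is uniform over $V$, the conditional probability of~(y) given~(s) is exactly $y^\lambda$. Since (s)~and~(y) are both $\cF_0^Y$-measurable, conditioning on $\cF_0^Y$ and applying \Lem~\ref{Lemma_HHS} yields $\Erw\bigl[\prod_{i\in\cM}\prod_{j\in L(i)}\cH^Y_{\tau_i(j),i,j}\,\big|\,\cF_0^Y\bigr]\leq(n-\theta)^{-l}$. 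Multiplying the three contributions gives $\pr[\cE_\tau]\leq 2^{\lambda-k\mu}\cdot y^\lambda\cdot(n-\theta)^{-l}$.

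Summing over the $t^l$ choices of $\tau$ and using $(n-\theta)^{-1}\leq 2/n$ (valid for large $k$, since $\theta=O(n/k)$) finally gives
\[
\pr[\cE]\leq t^l\cdot 2^{\lambda-k\mu}\cdot y^\lambda\cdot(n-\theta)^{-l}\leq 2^{-k\mu}(2y)^\lambda(2t/n)^l,
\]
as desired. The main technical hurdle is verifying the containment $\cE\subseteq\bigcup_\tau\cE_\tau$: it requires a careful case analysis of the three revelation mechanisms in~{\bf PI4} in order to pin down the correct index $\tau_i(j)$ for which $\cH^Y_{\tau_i(j),i,j}=1$. This parallels the corresponding step in the proofs of \Lem s~\ref{Lemma_danger_aux} and~\ref{Lemma_ugly}, and I expect it to be the most delicate part of the argument.
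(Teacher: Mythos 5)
Your proposal is correct and follows essentially the same strategy as the paper's proof: union bound over maps $\tau$ assigning to each $L$-position a time in $\brk t$, factor the sub-event into an $\cF_0^Y$-measurable part (sign pattern plus $Y$-membership, giving $2^{\lambda-k\mu}y^\lambda$) and the product of $\cH^Y$-indicators, apply the conditional-expectation bound to get $(n-\theta)^{-l}$, and then use $(n-\theta)^{-1}\leq 2/n$. One small point: the lemma you invoke for $\Erw\brk{\prod\cH^Y_{\tau_i(j),i,j}|\cF_0^Y}\leq(n-\theta)^{-l}$ is \Lem~\ref{Lemma_match_prod}, not \Lem~\ref{Lemma_HHS} (the latter label is a stray tag attached to the $\cH'$-lemma for Phase 2 and concerns the wrong filtration and the wrong indicator variables). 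Also, your explicit inclusion of the positivity of the $L$-positions in the sign event $(s)$ is a slight expositional improvement over the paper, which leaves that constraint implicit in the $\cH^Y$-indicators while still quoting the probability $y^\lambda 2^{\lambda-k\mu}$; and your choice of $\tau_i(j)$ as the first departure time of $\pi_\cdot(i,j)$ from $\{1\}$ coincides in the relevant case with the paper's choice of the $s$ with $\PHI_{ij}=z_s$. The containment $\cE\subseteq\bigcup_\tau\cE_\tau$ that you flag as the delicate step is asserted without elaboration in the paper as well, so your treatment matches the paper's level of detail on that point.
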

\begin{proof}
Let $\cE=\cE(Y,t,\cM,L,\Lambda)$.
Let $t_i$ be a map $L(i)\rightarrow\brk{t}$ for each $i\in\cM$, let  $\cT=(t_i)_{i\in\cM}$, and let
$\cE(\cT)$ be the event that a.\ and b.\ hold
and $\PHI_{ij}=z_{t_i(j)}$ for all $i\in\cM$ and $j\in L(i)$.
If $\cE$ occurs, then there is $\cT$ such that $\cE(\cT)$ occurs.
Hence, by the union bound
	\begin{equation}\label{eqEevent1}
	\pr\brk{\cE}\leq\sum_{\cT}\pr\brk{\cE(\cT)}\leq t^l\max_{\cT}\pr\brk{\cE(\cT)}.
	\end{equation}
To bound the last term fix any $\cT$.
Let $\cI=\cbc{(s,i,j):i\in\cM,j\in L(i),s=t_i(j)}$.
If $\cE(\cT)$ occurs, then $\cH_{sij}^Y=1$ for all $(s,i,j)\in\cI$.
Therefore, by \Lem~\ref{Lemma_match_prod}
	\begin{eqnarray}\label{eqEevent2}
	\pr\brk{\cE(\cT)|\cF_0^Y}
		&\leq&\Erw\brk{\prod_{(s,i,j)\in\cI}\cH_{sij}^Y|\cF_0^Y}\leq(n-\theta)^{-|\cI|}=(n-\theta)^{-l}
	\end{eqnarray}
Furthermore, the event that a.\ and b.\ hold for all $i\in\cM$ is $\cF_0^Y$-measurable.
Since the literals $\PHI_{ij}$ are chosen independently, we have
	\begin{eqnarray}\label{eqEevent3}
	\pr\brk{\mbox{a.\ and b.\ hold for all }i\in\cM}
		&\leq&y^\lambda2^{\lambda-k\mu}=\bc{2y}^\lambda2^{-k\mu}
	\end{eqnarray}
Combining~(\ref{eqEevent2}) and~(\ref{eqEevent3}), we obtain
	$\pr\brk{\cE(\cT)}\leq2^{-k\mu}(n-\theta)^{-l}\bc{2y}^\lambda.$
Finally, plugging this bound into~(\ref{eqEevent1}), we get
	$$\pr\brk{\cE}\leq2^{-k\mu}\bcfr{t}{n-\theta}^l\bc{2y}^\lambda\leq2^{-k\mu}\bcfr{2t}{n}^l\bc{2y}^\lambda,$$
as desired.
\qed\end{proof}

\begin{corollary}\label{Cor_Ymulti}
Let $t\leq\theta$, and let $\cM\subset V$ have size $|\cM|=\mu$.
Let $l,\lambda$ be integers such that $\lambda\geq 3\mu$.
Let $\cE(Y,t,\cM,l,\lambda)$ be the event that there exist maps $L,\Lambda$ that satisfy~(\ref{eqLLambda})
such that $l=\sum_{i\in\cM}|L(i)|$, $\lambda=\sum_{i\in\cM}|\Lambda(i)|$, and the event $\cE(Y,t,\cM,L,\Lambda)$ occurs.
Then
	$$\pr\brk{\cE(Y,t,\cM,l,\lambda)}\leq2^{-l-k\mu}(2k^2y)^\lambda.$$
\end{corollary}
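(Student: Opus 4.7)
My plan is to apply the union bound over all pairs $(L,\Lambda)$ that satisfy~(\ref{eqLLambda}) and have $\sum_i|L(i)|=l$, $\sum_i|\Lambda(i)|=\lambda$, invoking \Lem~\ref{Lemma_Ymulti} on each summand. First I would bound the total count of such pairs: for any fixed composition $l_1+\cdots+l_\mu=l$, $\lambda_1+\cdots+\lambda_\mu=\lambda$, the number of pairs $(L(i),\Lambda(i))$ of disjoint subsets of $\brk{k}$ with $|L(i)|=l_i$, $|\Lambda(i)|=\lambda_i$ is at most $\bink{k}{l_i}\bink{k}{\lambda_i}$; dropping the disjointness and $\lambda_i\geq 3$ constraints only inflates the count. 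Summing this product over all compositions, Vandermonde's identity (equivalently, extracting the $x^l$-coefficient of $(1+x)^{k\mu}$) gives a total count of at most $\bink{k\mu}{l}\bink{k\mu}{\lambda}$. Combined with \Lem~\ref{Lemma_Ymulti},
$$\pr\brk{\cE(Y,t,\cM,l,\lambda)}\leq 2^{-k\mu}\bink{k\mu}{l}(2t/n)^l\cdot\bink{k\mu}{\lambda}(2y)^\lambda.$$

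The next step is to extract the required factor $2^{-l}$. I would write $(2t/n)^l=2^{-l}(4t/n)^l$ and bound a single binomial term by the entire expansion:
$$\bink{k\mu}{l}(4t/n)^l\leq(1+4t/n)^{k\mu}\leq\exp(4kt\mu/n)\leq\exp(16\mu\ln\omega)=\omega^{16\mu},$$
where the last inequality uses the hypothesis $t\leq\theta=\lfloor 4n\ln\omega/k\rfloor$, so that $4kt/n\leq 16\ln\omega$. For the $\lambda$-factor, the hypothesis $\lambda\geq 3\mu$ together with the standard estimate $\bink{N}{r}\leq(\eul N/r)^r$ yields $\bink{k\mu}{\lambda}\leq(\eul k\mu/\lambda)^\lambda\leq(\eul k/3)^\lambda\leq k^\lambda$. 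Assembling the pieces,
$$\pr\brk{\cE(Y,t,\cM,l,\lambda)}\leq 2^{-l-k\mu}\omega^{16\mu}(2ky)^\lambda.$$

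Finally I would absorb $\omega^{16\mu}$ into the $k^\lambda$ slack between $(2ky)^\lambda$ and the target $(2k^2y)^\lambda$: since $\omega\leq\ln k$ and $\lambda\geq 3\mu$, for sufficiently large $k$ we have $\omega^{16\mu}\leq(\ln k)^{16\mu}\leq k^{3\mu}\leq k^\lambda$ (using $(\ln k)^{16}\leq k^3$ asymptotically), which delivers the claimed bound $2^{-l-k\mu}(2k^2y)^\lambda$. The main technical delicacy is exactly this trade-off: the binomial-theorem step overspends a polylogarithmic factor $\omega^{O(1)}$ per clause in $\cM$, and this must be precisely compensated by the $k^\lambda$ slack afforded by the hypothesis $\lambda\geq 3\mu$, which explains why the corollary imposes this lower bound on $\lambda$ (and in turn why Phase~2 insists that each endangered clause contain at least three variables from $Z'$).
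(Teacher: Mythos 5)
Your proof is correct and follows essentially the same route as the paper: union bound over the maps $(L,\Lambda)$, invoke \Lem~\ref{Lemma_Ymulti} on each summand, and absorb the resulting polylogarithmic overshoot into the $k^\lambda$ slack afforded by $\lambda\geq 3\mu$. The one noteworthy difference is in how you control the $l$-dependent factor: you bound $\bink{k\mu}{l}(4t/n)^l$ by the full binomial expansion $(1+4t/n)^{k\mu}\leq\omega^{16\mu}$, which is cleaner and more self-evidently correct than the paper's route of writing $\bink{k\mu}{l}\leq(\eul k\mu/l)^l$ and then optimizing $(50\mu\ln\omega/l)^l$ via the substitution $\alpha=l/(50\mu\ln\omega)$ and the elementary bound $-\alpha\ln\alpha\leq1/2$. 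Both methods arrive at the same qualitative conclusion, namely a factor $\omega^{O(\mu)}$ that is then dominated by $k^{3\mu}\leq k^\lambda$, and your closing remark correctly identifies this trade-off as the reason the corollary requires $\lambda\geq 3\mu$.
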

\begin{proof}
Given $l,\lambda$ there are at most $\bink{k\mu}{l,\lambda}$ ways to choose the maps $L,\Lambda$
(because the clauses in $\cM$ contain a total number of $k\mu$ literals).
Therefore, by \Lem~\ref{Lemma_Ymulti} and the union bound
	\begin{eqnarray}\nonumber
	2^{k\mu}\pr\brk{\cE(Y,t,\cM,l,\lambda)}&\leq&
		\bink{k\mu}{l,\lambda}(2t/n)^l(2y)^\lambda
		\leq2^{-l}\bcfr{4\eul\theta k\mu}{ln}^l\bcfr{2\eul k\mu y}{\lambda}^\lambda
		\leq2^{-l}\bcfr{50\mu\ln\omega}{l}^l(2ky)^\lambda\\
	&=&2^{-l}(2ky)^\lambda\cdot\omega^{-50\mu\cdot\alpha\ln\alpha},\quad\mbox{ where }
				\alpha=\frac{l}{50\mu\ln\omega}.
		\label{eqYmulitComp1}
	\end{eqnarray}
Since $-\alpha\ln\alpha\leq1/2$, we obtain
	$\omega^{-50\mu\cdot\alpha\ln\alpha}\leq\omega^{-25\mu}\leq(\ln k)^{25\mu}\leq k^{\lambda}.$
Plugging this last estimate into~(\ref{eqYmulitComp1}) yields the desired bound.
\qed\end{proof}

\begin{corollary}\label{Cor_Ymulti2}
Let $t\leq\theta$ and let
$\cE(t)$ be the event that there are sets $Y\subset V$, $\cM\subset\brk{m}$ of size
	$3\leq|Y|=|\cM|=\mu\leq nk^{-12}$
and integers $l\geq0$, $\lambda\geq 3\mu$ such that the event $\cE(Y,t,\cM,l,\lambda)$ occurs.
Then $\pr\brk{\cE(t)}=o(1/n)$.
\end{corollary}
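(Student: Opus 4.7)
\emph{Proof proposal.}
The plan is a straightforward union bound on top of \Cor~\ref{Cor_Ymulti}. For a fixed $\mu$ with $3\le\mu\le nk^{-12}$, there are $\bink{n}{\mu}$ choices for $Y$ and $\bink{m}{\mu}$ choices for $\cM$, and for any such $(Y,\cM)$ the integers $l,\lambda$ in the definition of $\cE(Y,t,\cM,l,\lambda)$ satisfy $0\le l\le k\mu$ and $3\mu\le\lambda\le k\mu$. Writing $y=\mu/n$ and applying \Cor~\ref{Cor_Ymulti} termwise, I obtain
\begin{eqnarray*}
\pr\brk{\cE(t)}&\le&\sum_{\mu=3}^{\lfloor nk^{-12}\rfloor}\bink{n}{\mu}\bink{m}{\mu}\sum_{l\ge0}\sum_{\lambda\ge3\mu}2^{-l-k\mu}(2k^2y)^\lambda.
\end{eqnarray*}
The sum over $l$ is at most $2$. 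Since $\mu\le nk^{-12}$, we have $2k^2y\le 2k^{-10}\le1/2$, so the geometric sum over $\lambda\ge 3\mu$ is bounded by $2(2k^2y)^{3\mu}$.

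Next I apply the standard estimates $\bink{n}{\mu}\le(\eul n/\mu)^\mu$ and $\bink{m}{\mu}\le(\eul m/\mu)^\mu$ together with $m\le 2^kk^{-1}\ln k\cdot n$, giving
\[
\bink{n}{\mu}\bink{m}{\mu}2^{-k\mu}\le\bcfr{\eul^2 n\ln k}{\mu^2 k}^\mu,
\]
and combining with the $\lambda$-sum,
\[
\bink{n}{\mu}\bink{m}{\mu}2^{-k\mu}\cdot(2k^2y)^{3\mu}=\bcfr{8\eul^2\mu k^5\ln k}{n^2}^\mu.
\]
Using $\mu\le nk^{-12}$ once more, the base is at most $8\eul^2(\ln k)/(nk^{7})$, which is $o(1/n)$ already for $\mu=3$ when $n$ is large.

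Putting everything together,
\[
\pr\brk{\cE(t)}\le 4\sum_{\mu\ge3}\bcfr{8\eul^2\ln k}{nk^{7}}^\mu\le 8\bcfr{8\eul^2\ln k}{nk^{7}}^3=O\!\bcfr{\ln^3 k}{n^3}=o(1/n),
\]
as desired. No genuine obstacle arises here; the only care required is to verify that the hypothesis $\mu\le nk^{-12}$ is strong enough to absorb the factor $(2k^2)^\lambda$ coming from \Cor~\ref{Cor_Ymulti} once one uses $\lambda\ge 3\mu$, which it comfortably is.
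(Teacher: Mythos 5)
Your proposal follows exactly the paper's route: fix $\mu$, union bound over $Y,\cM$ via binomial coefficients, sum the geometric series over $l$ and $\lambda$ using \Cor~\ref{Cor_Ymulti}, and then sum over $\mu$. The strategy is correct.

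However, there is a computational slip. From $\bink{n}{\mu}\bink{m}{\mu}\le(\eul^2 n m/\mu^2)^\mu$ and $m\le 2^k\ln(k)n/k$ one gets $\bink{n}{\mu}\bink{m}{\mu}2^{-k\mu}\le\bcfr{\eul^2 n^2\ln k}{k\mu^2}^\mu$, with $n^2$ in the numerator, not $n$ as you wrote. Consequently, after combining with the $(2k^2y)^{3\mu}=(8k^6\mu^3/n^3)^\mu$ factor, the per-$\mu$ bound is $\bcfr{8\eul^2 k^5\ln(k)\,\mu}{n}^\mu$, not $\bcfr{8\eul^2 k^5\ln(k)\,\mu}{n^2}^\mu$. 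In particular, substituting $\mu\le nk^{-12}$ gives a base bounded by the \emph{constant} $8\eul^2\ln(k)k^{-7}$, not by $8\eul^2\ln(k)/(nk^7)$. Your final line therefore overstates the smallness of the bound. The conclusion is nevertheless safe: with the corrected base, the ratio of consecutive terms is at most $\eul\cdot 8\eul^2\ln(k)k^5(\mu+1)/n\le 16\eul^3\ln(k)k^{-7}\le 1/2$ for large $k$, so the sum over $\mu\ge 3$ is dominated by the $\mu=3$ term, which is $O\bc{(\ln k)^3 k^{15}/n^3}=o(1/n)$. (This matches the paper's $O(n^{-3/2})$ claim up to constants depending on $k$.) So: same approach, correct conclusion, but the intermediate algebra needs the factor of $n$ restored.
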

\begin{proof}
Let us fix an integer $1\leq\mu\leq nk^{-12}$ and let $\cE(t,\mu)$ be the event that
there exist sets $Y,\cM$ of the given size $\mu=yn$ and numbers $l,\lambda$ such that
 $\cE(Y,t,\cM,l,\lambda)$ occurs.
Then the union bound and \Cor~\ref{Cor_Ymulti} yield
	\begin{eqnarray*}
	\pr\brk{\cE(t,\mu)}&\leq&\sum_{\lambda\geq 3\mu}\sum_{Y,\cM:|Y|=|\cM|=\mu}\sum_{l\geq 0}\pr\brk{\cE(Y,t,\cM,l,\lambda)}
		\leq\bink{n}{\mu}\bink{m}{\mu}2^{2-k\mu}(2k^2y)^{3\mu}\\
	&\leq&\bcfr{\eul^2 2^k\ln\omega}{ky^2}^\mu\cdot2^{2-k\mu}(2k^2y)^{3\mu}
		\leq4\brk{yk^{6}}^\mu\leq y^{-\mu/2}.
	\end{eqnarray*}
Summing over $3\leq\mu\leq nk^{-12}$, we obtain
	$\pr\brk{\cE(t)}\leq\sum_\mu\pr\brk{\cE(t,\mu)}=O(n^{-3/2})$.
\qed\end{proof}

\noindent\emph{Proof of \Prop~\ref{Prop_matching}.}
Assume that the graph $G(\PHI,Z,Z')$ does not have a matching that covers all $(Z,Z')$-endangered clauses.
Then by the marriage theorem there are a set $Y\subset Z'$ and a set $\cM$ of $(Z,Z')$-endangered clauses
such that $|\cM|=|Y|>0$ and all neighbors of indices $i\in\cM$ in the graph $G(\PHI,Z,Z')$ lie in $Y$.
Indeed, as each $(Z,Z')$-endangered clause contains at least three variables from $Z'$, we have $|Y|\geq 3$.
Therefore, for each clause $i\in\cM$ the following three statements are true:
\begin{enumerate}
\item[a.] There is a set $\Lambda(i)\subset\brk{k}$ of size at least $3$ such that
		$|\PHI_{ij}|\in Y$ for all $j\in\Lambda(i)$.
\item[b.] There is a (possibly empty) set $L(i)\subset\brk{k}\setminus\Lambda(i)$ such that
		 $\PHI_{ij}\in Z$ for all $j\in L(i)$.
\item[c.] For all $j\in\brk{k}\setminus(L(i)\cup\Lambda(i))$ the literal $\PHI_{ij}$ is negative.
\end{enumerate}
As a consequence, at least one of the following events occurs:
\begin{enumerate}
\item $T>\theta=\lfloor4k^{-1}\ln\omega\rfloor$.
\item $|Z'|>nk^{-12}$.
\item There is $t\leq\theta$ such that $\cE(t)$ occurs.
\end{enumerate}
The probability of the first event is $o(1)$ by \Prop~\ref{Prop_process}, 
the second event has probability $o(1)$ by \Prop~\ref{Prop_phase2},
and the probability of the third event is $\theta\cdot o(n^{-1})=o(1)$ by \Cor~\ref{Cor_Ymulti2}.
\qed

\end{document}